\documentclass[11pt]{amsart}
\usepackage{amsmath}
\usepackage{amssymb}
\usepackage{tabularx}
\usepackage{enumerate}
\usepackage{graphicx}
\usepackage{texdraw}
\usepackage{dsfont}
\usepackage{epstopdf}
\usepackage{hyperref}
\usepackage{color}

\topmargin=-0.7in \hoffset=-1.8cm \voffset=2cm \textheight=220mm
\textwidth=160mm

\usepackage{mathrsfs}
\usepackage{amsfonts,amssymb,amsmath}
\usepackage{epsfig}


\makeatletter
\@addtoreset{equation}{section}

\makeatother
\newtheorem{thm}{Theorem}[section]
\newtheorem{lem}[thm]{Lemma}

\newtheorem{remark}[thm]{Remark}

\newcommand\eqnref[1]{{\rm(\ref{#1})}}

\newcommand{\R}{\mathbb{R}}
\newcommand{\ve}{\varepsilon}

\newcommand\cA{{\mathcal A}}
\newcommand\cB{{\mathcal B}}
\newcommand\bes{\begin{eqnarray}}
\newcommand\ees{\end{eqnarray}}
\newcommand\bess{\begin{eqnarray*}}
\newcommand\eess{\end{eqnarray*}}
\newcommand\bE{{\mathbf E}}
\newcommand\BbbR{{\mathbb R}}
\newcommand\cL{{\mathcal L}}
\newcommand\cM{{\mathcal M}}
\newcommand\vep{{\varepsilon}}
\newcommand\mscX{{\mathscr{X}}}
\newcommand\Shift{{\mathbf{Z}}}

\begin{document}

\title[Long time behavior of solutions]
{Long time behavior of solutions of a reaction-diffusion equation on unbounded intervals  with
Robin boundary conditions}
\thanks{This work is  supported in part by NSFC (No. 11271285) and by NSF-1008905. The last author is supported by JSPS}
\author[]{}
\author[X. Chen, B. Lou, M. Zhou, T.Giletti]{Xinfu Chen$^1$, Bendong Lou$^2$, Maolin Zhou$^3$, Thomas Giletti$^{3,4}$}
\thanks{$^1$  Department of Mathematics,   University of Pittsburgh,
Pittsburgh, PA 15260, USA}
\thanks{$^2$ Department of Mathematics, Tongji University, Shanghai 200092, China.}
\thanks{$^3$ Graduate School of Mathematical Sciences, The University of Tokyo,
Tokyo 153-8914, Japan}
\thanks{$^4$ Institut Elie Cartan de Lorraine, Universit\'{e} de Lorraine, Vandoeuvre-l\`{e}s-Nancy 54506, France.}
\thanks{{\bf Emails:} {\sf xinfu@pitt.edu} (X. Chen), {\sf
blou@tongji.edu.cn} (B. Lou), {\sf zhouml@ms.u-tokyo.ac.jp} (M. Zhou),
{\sf thomas.giletti@univ-lorraine.fr} (T. Giletti) }
\begin{abstract}
We study the long time behavior, as $t\to\infty$,  of solutions of
$$
\left\{
\begin{array}{ll}
u_t = u_{xx} + f(u), &  x>0, \ t >0,\\
u(0,t) = b  u_x(0,t), & t>0,\\
u(x,0) = u_0 (x)\geqslant 0 , & x\geqslant    0,
\end{array}
\right.
$$
where $b\geqslant 0$ and $f$  is an unbalanced bistable nonlinearity. By investigating families of initial data of the type $\{ \sigma \phi \}_{\sigma >0}$, where $\phi$ belongs to an appropriate class of nonnegative compactly supported functions, we exhibit the sharp threshold between vanishing and spreading. More specifically, there exists some value $\sigma^*$ such that the solution converges uniformly to 0 for any $0 < \sigma < \sigma^*$, and locally uniformly to a positive stationary state for any $ \sigma > \sigma^*$. In the threshold case $\sigma= \sigma^*$, the profile of the solution approaches the symmetrically decreasing ground state with some shift, which may be either finite or infinite. In the latter case, the shift evolves as $C \ln t$ where~$C$ is a positive constant we compute explicitly, so that the solution is traveling with a pulse-like shape albeit with an asymptotically zero speed. Depending on $b$, but also in some cases on the choice of the initial datum, we prove that one or both of the situations may happen.
\end{abstract}

\subjclass[2010]{35K57, 35K15, 35B40}
\keywords{Reaction-diffusion equation, long time behavior, Robin boundary condition, sharp threshold.}
\maketitle


\section{Introduction}

In this paper, we investigate the long time behavior, as $t\to \infty$, of solutions of
\begin{equation}\label{p}
\left\{
\begin{array}{ll}
u_t = u_{xx} + f(u), &  x>0, \ t >0,\medskip\\
u(0,t) = b  u_x(0,t), & t>0,\medskip\\
u(x,0) = u_0 (x) \geqslant 0, & x\geqslant    0,
\end{array}
\right.
\end{equation}
where $b\geqslant 0$ is a constant and $f$ is an {\bf unbalanced  bistable} nonlinearity satisfying
\begin{equation*}\leqno{\bf (F)}
\hskip 10mm
\left\{
 \begin{array}{l}
 f\in C^1([0,\infty)), \  f(0)=0> f'(0) =: -\lambda^2, \
 \ f(\cdot) <0 \mbox{ in } (0,\alpha),\medskip \\  f(\cdot)>0 \mbox{ in } (\alpha,1),
 \  f(\cdot)<0 \mbox{ in }(1,\infty),\  \inf_{s>1} f'(s)>-\infty,\medskip
  \\
\mbox{for } F(u):= -2\int_0^u f(s) ds,\ F(\theta)=0 \mbox{ for some } \theta \in (\alpha,1).
\end{array}
\right.
\end{equation*}
Such nonlinearities appear in various applications including
mathematical ecology, population genetics and physics. An interesting feature is that the outcome depends critically on the initial datum (see the seminal papers~\cite{AW1,AW2} for the Cauchy problem in the whole space). Here, the initial function~$u_0$ belongs to  $\mathscr {X}(h)$ for some $h>0$, where
\bess
\mathscr {X}(h) := \{ \phi  \mid
 \phi \in C ([0,\infty)),\ \phi\geqslant 0,\ \phi\not \equiv 0 \mbox{ and }\phi \equiv 0 \mbox{ in }[h,\infty)\}.
\eess

It easily follows from the comparison principle that solutions associated with such initial data remain positive and are uniformly bounded with respect to both space and time. Therefore, one can expect the large time behavior of solutions to be largely dictated by nonnegative and bounded steady states of \eqref{p}, that is by solutions of
\begin{equation}
 \label{stationary} v''+f(v)=0\leqslant v\hbox{ \ in \ }[0,\infty), \quad  v(0)=b v'(0), \quad v\in L^\infty((0,\infty)).
\end{equation}
A  phase plane analysis shows that all such steady states  can be classified as follows (c.f. \S2):

\begin{itemize}
\item[(1)]  {\bf Trivial Solution}  $v\equiv 0$;

\item[(2)]  {\bf Active States} $v (\cdot) = v_* (\cdot -z)$ where $v_*$ is the unique increasing solution of $v_* '' + f(v_*) =0$ on $[0,\infty)$ subject to $v_* (0)=0$, $v_* (\infty)=1$, and $$z \in \Shift_{active} (b) := \{ z \; | \; v_* (-z) = b v_* ' (-z) \} \neq \emptyset ;$$

\item[(3)]  {\bf Ground States}  $v(\cdot)=V(\cdot-z)$ where  $V$ is the unique even positive solution of $V''+f(V)=0$ on $\R$ subject to $V(\infty)=0$, and $$z\in \Shift_{ground} (b):=\{z\;|\; V(-z)=b V'(-z)\};$$

\item[(4)] {\bf Positive Periodic Solutions}.
\end{itemize}
We will see in Section~\ref{subsec:stationary sol}, using standard phase plane analysis, that the set $\Shift_{ground} (b)$ can be characterized as follows:
\begin{equation}\label{ground_set}
\Shift_{ground} (b) = \{ z > 0 \; | \; V(-z)=s \mbox{ and } b \sqrt{F(s)} = s \}.
\end{equation}
In particular, ground states of \eqref{p} exist or, in other words, $\Shift_{ground} (b)$ is not empty, if and only if there exists $s \in (0,\theta)$ such that $b \sqrt{F(s)}= s$.

Note that all ground states of \eqref{p} are shifts of the same function $V$. This function $V$ always exists and is itself often refered to as the ground state of the associated Cauchy problem on the whole real line (see \eqref{p-Cauchy} below). Therefore, by some slight abuse of language and for convenience, we will often refer to any function $V(\cdot -\xi)$, with $\xi \in \R$, as a shifted ground state, even though it may not satisfy the Robin boundary condition.\\

Our first main result is  the  following:


\begin{thm}\label{thm:main}
Assume {\bf (F)} and $\phi\in \mathscr{X}(h)$ for some $h>0$. Let  $u$ be the
solution of  \eqref{p} with  $u_0 =\sigma \phi$ ($\sigma \geqslant 0$).
There exists $\sigma^*  \in (0,\infty)$ such that the following trichotomy  holds:

\begin{itemize}
\item[\rm (i)] If $\sigma>\sigma^*$, {\bf spreading} happens in the following sense:
\bess \lim\limits_{t\to \infty} \|u(\cdot,t) - v_* (\cdot -z_\sigma)\|_{C^2([0,M])} = 0 \quad \mbox{ for any } M>0,
\eess
where $z_\sigma \in \Shift_{active} (b)$ is a nonincreasing function of $\sigma$.\vspace{3pt}

\item[\rm (ii)] If $0\leqslant \sigma < \sigma^*$, {\bf vanishing} happens in the following sense:
\bess \lim\limits_{t\to \infty}\| u(\cdot,t)\|_{H^2([0,\infty))}  = 0 ;
\eess

\item[\rm (iii)]
In the {\bf transition} case $\sigma =\sigma^*$, the solution converges to a shifted ground state:
\bess \lim\limits_{t\to \infty} \|u(\cdot,t) - V(\cdot -\xi (t))\|_{H^2([0,\infty))}=0,
\eess
where either $\lim\limits_{t\to \infty}\xi (t)=\infty$ or $\lim\limits_{t\to \infty} \xi (t)=z\in \Shift_{ground}(b)$.
\end{itemize}
Moreover, $\sigma^*$ is nonincreasing and continuous with respect to both $b \in [0,\infty)$ and $\phi \in \cup_{h >0}\mathscr {X}(h)$.
\end{thm}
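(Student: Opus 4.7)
The plan is to define
\[
\sigma^* := \sup\{\sigma \geqslant 0 \mid \text{vanishing occurs for } u_0 = \sigma \phi\},
\]
and to establish the trichotomy by combining the comparison principle, parabolic regularity, and $\omega$-limit analysis via the Lyapunov energy
\[
E[u] := \int_0^\infty\Bigl(\tfrac12 u_x^2 + \tfrac12 F(u)\Bigr)dx + \tfrac{b}{2}u_x(0)^2,
\]
which satisfies $\tfrac{d}{dt}E[u(\cdot,t)] = -\int_0^\infty u_t^2\, dx \leqslant 0$ after using the Robin boundary condition (the boundary term vanishes identically when $b=0$ since then $u(0,t)\equiv 0$). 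The central monotonicity is that $\sigma \mapsto u(\cdot,t;\sigma\phi)$ is nondecreasing by comparison, so that both the set of spreading $\sigma$ and the set of vanishing $\sigma$ form intervals in $[0,\infty)$; showing each is open forces their common boundary to reduce to a single point $\sigma^*$.

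For small $\sigma$, a space-independent supersolution $\bar u(t) = C e^{-\mu t}$ with $C = \sigma\|\phi\|_\infty$ and any $0 < \mu < \lambda^2$ works provided $C$ is small enough that $f(s) \leqslant -\mu s$ for $0 \leqslant s \leqslant C$, which is possible since $f'(0) = -\lambda^2 < 0$; the Robin boundary data are trivially compatible because $\bar u_x \equiv 0$. This yields $u \to 0$ exponentially in $L^\infty$, and standard parabolic estimates together with the Gaussian spatial tails of $u$ upgrade the convergence to $H^2([0,\infty))$. For large $\sigma$, I would exhibit a compactly supported subsolution built from a truncated shifted ground state $V(\cdot - \xi)$ placed inside the support of $\phi$ (a classical Aronson--Weinberger mountain-pass argument); once $u$ exceeds $\alpha$ on a sufficiently long interval, spreading is forced, and the convergence to $v_*(\cdot - z_\sigma)$ on compact sets follows from energy monotonicity, the classification of stationary solutions in Section~2, and a zero-number argument. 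Openness of the spreading set (resp.\ vanishing set) comes from continuous dependence on the initial datum and local stability of $v_*$ (resp.\ of $0$), so $\sigma^* \in (0,\infty)$ is a genuine sharp threshold. Monotonicity of $z_\sigma$ in $\sigma$ is then a direct consequence of the monotonicity of $u$ in $\sigma$ and of the strict monotonicity of $v_*$.

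In the transition case $\sigma = \sigma^*$, the solution neither vanishes nor spreads, so by parabolic compactness its $\omega$-limit set is a nonempty connected subset of bounded nonnegative stationary solutions. The classification recalled in the introduction leaves only the shifted ground state family, after excluding: the trivial solution (by openness of vanishing, since $0$ in the $\omega$-limit would bring $u$ into the basin of $0$), active states (by openness of spreading, since then $u$ would enter the basin of a translate of $v_*$), and positive periodic stationary solutions (whose energy on the half-line is infinite, incompatible with the uniform bound on $E[u(\cdot,t)]$ inherited from $E[u_0]$). A zero-number argument \`a la Angenent--Matano, applied to $u(\cdot,t) - V(\cdot - \xi)$ as $\xi$ varies in $\mathbb{R}$, then produces a continuous shift function $\xi(t)$ with $u(\cdot,t) - V(\cdot - \xi(t)) \to 0$ in $H^2([0,\infty))$ and $\xi(t)$ either converging to some $z \in \Shift_{ground}(b)$ or diverging to $+\infty$.

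Finally, monotonicity of $\sigma^*(b,\phi)$ in $b$ and in $\phi$ comes from the comparison principle: a larger $b$ or a pointwise larger $\phi$ yields a larger solution and hence spreading at a smaller $\sigma$. Continuity is argued by contradiction: if $\sigma^*(b_n,\phi_n)$ failed to converge to $\sigma^*(b_0,\phi_0)$, continuous dependence on the parameters together with the openness of the spreading and vanishing sets would yield, along a subsequence, either spreading or vanishing at $\sigma = \sigma^*(b_0,\phi_0)$ for the limiting problem, contradicting the trichotomy. The main obstacle, as I expect, is step (iii): ruling out positive periodic stationary solutions in the $\omega$-limit set and showing that $\xi(t)$ possesses a genuine (possibly infinite) limit rather than a nontrivial cluster set will demand the most delicate zero-number and Lyapunov analysis.
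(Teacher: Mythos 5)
Your proposal breaks down at the step you treat as an afterthought: showing that the spreading set $\cB$ and the vanishing set $\cA$ are complementary up to a single point. The comparison principle gives $\cA = [0,\sigma_*)$ and $\cB = (\sigma^*,\infty)$ with $\sigma_* \leqslant \sigma^*$, and openness of \emph{each} is indeed easy (local stability of $0$, and persistence of spreading once $u>\theta$ on a long interval). But openness of both sets is perfectly compatible with $\sigma_* < \sigma^*$; nothing ``forces their common boundary to reduce to a single point.'' Establishing $\sigma_* = \sigma^*$ is precisely the open problem from Fa\v{s}angov\'a--Feireisl that this paper sets out to resolve, and it occupies all of \S4. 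The paper's proof of Lemma \ref{eql} uses in an essential way the prior result that \emph{every} $\sigma\in[\sigma_*,\sigma^*]$ leads to $H^2$-convergence to a shifted ground state $V(\cdot-\xi(t))$, plus a sliding comparison argument exploiting the Robin boundary condition (namely that $\xi^*(t)$ stays bounded away from $0$, so $u(\gamma,\cdot;\sigma^*\phi)$ controls $u(0,\cdot;\sigma_*\phi)$ with a strict margin, leading to a contradiction with $|\xi^*(t)-\xi_*(t)|\to 0$).

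Your treatment of case (iii) also glosses over the central analytic difficulty. The $\omega$-limit analysis in the fixed frame only yields $C^2_{\rm loc}$ convergence; if $\xi(t)\to\infty$, the limit \emph{in the fixed frame} is $v\equiv 0$ (with $\|u(\cdot,t)\|_\infty$ staying above $\alpha$), and no zero-number argument against stationary translates of $V$ will, by itself, produce a shift with $\|u(\cdot,t)-V(\cdot-\xi(t))\|_{H^2([0,\infty))}\to 0$. The paper has to (a) derive a priori energy bounds (Lemma \ref{lem:E}), which require the observation that $|J_\gamma(t)|$ stays bounded because $\sigma\notin\cB$; (b) pass to a moving frame anchored at a well-chosen maximum point and extract compactness there; (c) show $u(\cdot,t)$ eventually has a unique maximum via a reflection/zero-number argument (Lemma \ref{lem:xiN-xi}); and (d) run a concentrated-compactness estimate on the tails $J^c_\vep(t)$ to upgrade $H^2_{\rm loc}$ to $H^2$ on the whole half-line. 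Also, ruling out positive periodic limits by an ``infinite energy'' argument is circular at this stage: the lower bound on $\bE[u(\cdot,t)]$ is not free (the integrand $F(u)$ is negative where $u>\theta$) and the paper instead obtains it from the boundedness of the super-level sets, which again uses $\sigma\notin\cB$; the cleaner exclusion of periodic limits is via the fact that $\mathcal{Z}(u_x(\cdot,t))$ is finite, hence the $\omega$-limit cannot oscillate. Finally, a small point: a ``truncated shifted ground state'' is not a valid compactly supported subsolution since $V>0$ everywhere; the correct object is the compactly supported solution $v_m$ in \eqref{finite}--\eqref{extended_vm} built from a level $m\in(\theta,1)$, which vanishes at both endpoints of its support and extends by zero to a generalized subsolution.
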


\begin{remark}\rm
This theorem, as well as the ones below, could be extended to some other families of initial data $\{ \phi_\sigma\}_{\sigma >0}$ that are increasing with respect to the parameter~$\sigma$, and such that any element is a bounded, nonnegative and nontrivial compactly supported function. For instance, our results also hold for the family of characteristic functions $u_0(x):=\mathbf{1}_{[0,\sigma]}$, as in~\cite{Zla} where the equation on the whole real line was considered. As the proof is very similar and for simplicity, we choose to restrict ourselves to families of the type $\sigma \phi$ with $\phi \in \cup_{h>0} \mscX (h)$.
\end{remark}

In the next theorem, we will clear out what happens in case~(iii).
We denote the set of all the initial data that fall into the transition case by
$$\Sigma :=\{\phi \mid \phi \in \cup_{h>0} \mscX(h)\ \mbox{  and }\lim\limits_{t\to \infty} \|u(\cdot,t;\phi) - V(\cdot - \xi (t))\|_{H^2([0,\infty))}=0\},$$
and also introduce its subset
$$\Sigma _1:=\{\phi \mid \phi \in \Sigma \mbox{ and }\lim\limits_{t\to \infty} \xi (t)=z\in \Shift_{ground} (b)\}.$$

\begin{thm}\label{thm:main2}
Under the assumptions of Theorem~\ref{thm:main}, when $\sigma =\sigma^*$, we have
$\xi (t)=o(t)$ and even, without loss of generality, that $\xi ' (t) \to 0$ as $t \to +\infty$ and
\begin{itemize}
 \item[{\rm(i)}]  if  $b<\frac{s}{\sqrt{F(s)}}$ for all $s\in (0,\theta )$, then
$$\lim\limits_{t\to \infty}\xi (t)=\infty ;$$
 \item[{\rm(ii)}] if there exists a sequence $s_n\rightarrow 0$ such that $b\geqslant \frac{s_n}{\sqrt{F(s_n)}}$ for all $n\in \mathbb{N}$, then there is a $z \in \Shift_{ground} (b)$ such that
$$\lim\limits_{t\to \infty} \xi (t)=z;      $$

\item[{\rm(iii)}]if none of the two conditions above hold, then both cases will happen depending on $\phi$. Moreover, $\Sigma _1$ is a closed set of $\Sigma $ in $L^{\infty}$-topology.
\end{itemize}
\end{thm}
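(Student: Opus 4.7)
The proof centers on an asymptotic ODE for the shift $\xi(t)$, derived from the Robin boundary condition. Multiplying $u_t = u_{xx} + f(u)$ by $u_x$ and integrating over $[0,\infty)$ (using $u\to 0$ at infinity and $F=-2\int_0^\cdot f$) yields the exact identity
\[
\int_0^\infty u_t\, u_x\, dx = \frac{F(u(0,t)) - u_x(0,t)^2}{2},
\]
which, by the Robin condition $u(0,t)=b u_x(0,t)$, simplifies to $\frac12(F(u(0,t)) - u(0,t)^2/b^2)$. From Theorem~\ref{thm:main}(iii), $u(\cdot,t)$ approaches $V(\cdot-\xi(t))$ in $H^2$, and since $V$ is even, $u(0,t) = s(t) + o(1)$ with $s(t):=V(\xi(t))$, while $u_t \approx -\xi'(t) V'(\cdot-\xi(t))$ and $u_x \approx V'(\cdot-\xi(t))$. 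Substitution produces the key asymptotic relation
\[
\xi'(t)\, \|V'(\cdot-\xi(t))\|_{L^2([0,\infty))}^2 = \frac{s(t)^2/b^2 - F(s(t))}{2} + o(1),
\]
so that the sign of $\xi'(t)$ is driven to leading order by the sign of $s(t)/\sqrt{F(s(t))} - b$. The preliminary assertions $\xi(t)=o(t)$ and (after smoothing $\xi$) $\xi'(t)\to 0$ then follow by combining this identity with an $L^2_{t,x}$ bound on $u_t$ obtained from the Lyapunov functional naturally associated to the Robin problem.

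\textbf{Parts (i) and (ii).} Theorem~\ref{thm:main}(iii) already gives the dichotomy: either $\xi(t)\to\infty$, or $\xi(t)\to z\in\Shift_{ground}(b)$. In case (i), $\Shift_{ground}(b)=\emptyset$ by \eqref{ground_set}, so Part~(i) follows at once. For Part~(ii), assume for contradiction $\xi(t)\to\infty$, so $s(t)\to 0$. The hypothesis provides $s_n\to 0$ with $s_n^2 \leq b^2 F(s_n)$; at times $t_n$ when $s(t_n)=s_n$, the right-hand side of the asymptotic identity is nonpositive, hence $\xi'(t_n)\leq o(1)$. This rules out escape to infinity: either strict inequality $s^2 < b^2 F(s)$ holds near some $V^{-1}(s_n)$, giving a trapping region with $\xi'<0$ arbitrarily far out; or equality holds along a sequence, so $\Shift_{ground}(b)$ accumulates at $\infty$ and $\xi(t)$ must stabilize at one such element. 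An ODE comparison argument on the asymptotic identity then contradicts $\xi(t)\to\infty$, forcing $\xi(t)\to z\in\Shift_{ground}(b)$.

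\textbf{Part (iii).} Under (iii), $s/\sqrt{F(s)}>b$ for $s\in(0,s_0)$ (so $\xi'>0$ for $\xi$ large and escape is possible), yet $\Shift_{ground}(b)$ is nonempty and bounded. Both scenarios are realized: using continuity of $\sigma^*$ in $\phi$ from Theorem~\ref{thm:main}, one constructs one-parameter families of transition data whose trajectories $\xi(t)$ fall into either basin of the asymptotic ODE. For the closedness of $\Sigma_1$ in $\Sigma$ under $L^\infty$: take $\phi_n \in \Sigma_1$ with $\phi_n \to \phi \in \Sigma$ and corresponding limit shifts $z_n \in \Shift_{ground}(b)$. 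Since $\Shift_{ground}(b)$ is bounded and closed under (iii), a subsequence satisfies $z_n \to z \in \Shift_{ground}(b)$. A diagonal argument combining continuous dependence on finite intervals $[0,T_n]$ with the $H^2$-closeness of $u(\cdot,T_n;\phi_n)$ to $V(\cdot-z_n)$ (for $T_n$ chosen so the latter is of order $1/n$) yields $\xi(t;\phi)\to z$, i.e., $\phi\in\Sigma_1$.

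\textbf{Main obstacle.} The chief technical hurdle is the error control in the asymptotic identity: one must show $\int_0^\infty \rho_t\, u_x\, dx$ is of lower order than $\xi'$, where $\rho := u - V(\cdot-\xi)$, although $\rho_t$ is only accessible through the equation. The closedness of $\Sigma_1$ in Part~(iii) is likewise delicate, since continuous dependence on initial data degrades as $t\to\infty$ and the transition case is intrinsically codimension-one; it requires a stability argument near the attracting elements of $\Shift_{ground}(b)$, combined with the diagonal extraction above.
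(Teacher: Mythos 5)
Your boundary identity $\int_0^\infty u_t u_x\,dx = \tfrac12\bigl(F(u(0,t))-u_x(0,t)^2\bigr)$ is correct and is an appealing idea, but the reduction to an ``asymptotic ODE'' for $\xi$ contains a gap that is not merely technical: when $\xi(t)\to\infty$ the driving term $s(t)^2/b^2-F(s(t))$ is of order $e^{-2\lambda\xi(t)}$, i.e.\ exponentially small, whereas Theorem~\ref{thm:main}(iii) gives the convergence $\|u(\cdot,t)-V(\cdot-\xi(t))\|_{H^2}\to 0$ with \emph{no rate}. The term $\int_0^\infty \rho_t u_x\,dx$ you flag is therefore not ``of lower order than $\xi'$'' by anything proved so far; controlling it is exactly the content of the center-manifold analysis of Section~\ref{estimate} (and there it requires the spectral gap of $\cL^\xi$ and the decomposition $v=v_1+\hat v_2+v_3$). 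Consequently the sign conclusions you draw for $\xi'$ in part (ii) — the ``trapping region'' and the ``ODE comparison'' — are unsupported; moreover the hypothesis of (ii) only gives $b\geqslant s_n/\sqrt{F(s_n)}$ along a sequence, possibly with equality at isolated points, where even a rigorous version of your ODE would be marginal. The paper avoids all of this by a purely qualitative argument: when $b\sqrt{F(s_n)}\geqslant s_n$, the shifted ground state $V(\cdot-x_n)$ with $V(-x_n)=s_n$ is a (generalized) subsolution, and since $u(0,t)\to 0$ under the assumption $\xi(t)\to\infty$, Lemma~\ref{lem:zero} applied to $u-V(\cdot-x_N)$ forces $u(\cdot,t)<V(\cdot-x_N)$ eventually, contradicting the drift. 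No rate is needed.

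Two further points. First, in part (iii) you assert that both behaviors occur but give no construction; the paper builds explicit data ($\phi_0$ coinciding with $V(\cdot-z_0)$ on $[0,2z_0]$ and crossing it exactly once, versus $\phi_2$ supported to the right of a compactly supported stationary bump $v_m(\cdot+x_m')$), and in both cases the conclusion again comes from intersection counting, not from basins of an ODE. Second, and more seriously, your closedness argument for $\Sigma_1$ rests on ``a stability argument near the attracting elements of $\Shift_{ground}(b)$'' — but the ground states are \emph{not} attracting: the principal eigenvalue $\mu_1$ of the linearization about $V$ is positive (Lemma~\ref{le1}), and indeed in the drifting case the solution is close to shifted ground states at every large time yet still escapes. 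Being $H^2$-close to $V(\cdot-z_n)$ at one time $T_n$ therefore does not place $\phi$ in $\Sigma_1$, and the diagonal argument does not close (the quantifiers also conflict: $n$ must be large relative to $T_n$ for continuous dependence, while $T_n$ must be large relative to $n$ for the approach to $V(\cdot-z_n)$). The paper's proof instead compares $u(\cdot,T_2;\phi)$ with its own time translate $u(\cdot,1;\phi)$, shows the intersection number is one and persists under perturbation of $\phi$, and derives a contradiction from the behavior of the boundary value $u(0,t)$ for $\phi_n\in\Sigma_1$. You would need either to import the full quantitative machinery of Section~\ref{estimate} into this proof, or to replace your ODE heuristics with zero-number arguments of this kind.
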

\begin{remark}\rm
We emphasize that these cases are mutually exclusive, and that Theorem~\ref{thm:main2} covers all possible choices of $f$ satisfying {\bf (F)} and $b \geqslant 0$.
For instance, $\lim\limits_{t\to \infty} \xi (t)=\infty $ happens for some appropriate initial datum when $b\lambda <1$, while $\lim\limits_{t\to \infty}\xi (t)=z\in \Shift_{ground}(b)$ always happen when $b\lambda >1$.
\end{remark}
\begin{remark}\label{zground}\rm
Note that for $\xi (t) \to \infty$ to occur in the transition case, the set $\Shift_{ground} (b)$ needs to be either empty (case (i)) or bounded (case (iii)), thanks to the characterization \eqref{ground_set} of $\Shift_{ground} (b)$. Moreover, whenever $\Shift_{ground} (b)$ is empty, it already followed from Theorem 1.1 that $\xi (t) \to \infty$. However, the boundedness of $\Shift_{ground}(b)$ does not always allow for $\xi (t) \to \infty$, as this situation may fall into either cases (ii) or (iii) depending on whether $b > \frac{s}{\sqrt{F(s)}}$ or $b< \frac{s}{\sqrt{F(s)}}$ for small values of $s$.
\end{remark}
In the transition case and when $\lim_{t\to\infty} \xi (t)=\infty$, the solution slowly drifts away to the right, and it is an interesting problem to study this motion. The next theorem gives a precise calculation result of the position $\xi (t)$, under some technical additional regularity assumptions on the nonlinearity $f$.

\begin{thm}\label{thm:main3}
Under the assumptions of Theorem \ref{thm:main}, when $\sigma = \sigma^*$ and $\lim\limits_{t\to \infty} \xi (t)=\infty$:
\begin{itemize}
\item[\rm (i)]if $b\lambda <1$ and $f\in C^2$, then
 \bess  \xi (t) = \frac{\ln t}{2\lambda} + \frac{\ln [2\lambda c(b)]}{2\lambda} + o(1) \quad \mbox{ as } t\to\infty,
\eess
where $A:= \theta e^{\int_0^\theta[\frac{\lambda}{\sqrt{F(s)}}-\frac{1}{ s}]ds}$ and $c(b) :=\frac{\lambda^2 (1-b\lambda) A^2}{[1+b\lambda] \int_0^\theta \sqrt{F(s)} ds}$;

\item[\rm (ii)]if $b\lambda =1$, $f\in C^3$ and $f''(0)>0$, then
\bess  \xi (t) = \frac{\ln t}{3\lambda} + \frac{\ln[3\lambda \hat{c}]}{3\lambda} + o(1) \quad
\mbox{ as } t\to \infty, \eess
where $\hat c :=\frac{f''(0) A^3}{12\int_0^\theta\sqrt{F(s)}ds}$.
\end{itemize}
\end{thm}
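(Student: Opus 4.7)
The plan is to derive a sharp ODE for $\xi(t)$ by projecting the perturbation equation onto the translation mode $V'(\cdot - \xi(t))$. Writing $u(x,t) = V(x - \xi(t)) + w(x,t)$, with $w$ small in $H^2([0,\infty))$ and $\xi'(t)\to 0$ by Theorem~\ref{thm:main2}, and fixing $\xi(t)$ through an orthogonality condition such as $\int_0^\infty w(x,t)\, V'(x-\xi(t))\,dx = 0$, the perturbation solves
\begin{equation*}
w_t = w_{xx} + f'(V(\cdot-\xi))w + \xi'(t)\, V'(\cdot-\xi) + N(w),
\end{equation*}
with $N(w)=O(w^2)$, subject to $w(0,t) - b w_x(0,t) = -\bigl[V(-\xi(t)) - b V'(-\xi(t))\bigr]$. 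Multiplying by $V'(x-\xi(t))$ and integrating by parts twice on $[0,\infty)$ kills the bulk through $V''' + f'(V)V' = 0$; using $(V')^2 = F(V)$ to evaluate $\int_0^\infty |V'(x-\xi)|^2\,dx \to 2\int_0^\theta \sqrt{F(s)}\,ds$ and absorbing $\int w_t V'\,dx$ into lower order (it equals $\xi'(t)\int w\,V''(\cdot-\xi)\,dx$ under the orthogonality), I reach
\begin{equation*}
\xi'(t)\cdot 2\int_0^\theta \sqrt{F(s)}\,ds \;=\; w_x(0,t)\,V'(-\xi(t)) - w(0,t)\,V''(-\xi(t)) + \text{(lower order)}.
\end{equation*}

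To close this ODE I need $w(0,t)$ and $w_x(0,t)$ via a matched expansion. In the outer zone $x \ll \xi(t)$, $V(\cdot-\xi)$ is exponentially small and $f'(V)\approx -\lambda^2$, so quasistatically $w(x,t) \approx C(t)e^{-\lambda x}$, with $C(t)$ fixed by the Robin mismatch once the sharp tail of $V$ at infinity is known. Integrating $V' = -\sqrt{F(V)}$ from $V=\theta$ down to $V\to 0^+$ and subtracting the integrable logarithmic singularity $1/(\lambda V)$ yields $V(\xi) = Ae^{-\lambda\xi}(1+o(1))$ with $A = \theta\exp\int_0^\theta\bigl(\frac{\lambda}{\sqrt{F(s)}}-\frac{1}{s}\bigr)ds$. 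In case~(i), $b\lambda<1$, the mismatch is $V(-\xi)-bV'(-\xi)\sim A(1-b\lambda)e^{-\lambda\xi}$, giving $C(t) = -\frac{A(1-b\lambda)}{1+b\lambda}e^{-\lambda\xi(t)}(1+o(1))$; substituting $w(0)=C$, $w_x(0)=-\lambda C$, $V'(-\xi)\sim\lambda Ae^{-\lambda\xi}$, $V''(-\xi)\sim\lambda^2 Ae^{-\lambda\xi}$ into the projection identity yields $\xi'(t) = c(b)e^{-2\lambda\xi(t)}(1+o(1))$ with the stated $c(b)$, and integrating $\frac{d}{dt}(e^{2\lambda\xi}) = 2\lambda c(b)(1+o(1))$ produces the asymptotics. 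In case~(ii), $b\lambda=1$, the leading mismatch vanishes; expanding $V(x) = Ae^{-\lambda x} + Be^{-2\lambda x} + O(e^{-3\lambda x})$ with $B = -\frac{f''(0)A^2}{6\lambda^2}$ (from matching $V'' = -f(V)$ at order $e^{-2\lambda x}$, using $f\in C^2$), the next-order mismatch $V(-\xi)-bV'(-\xi) = B(1-2b\lambda)e^{-2\lambda\xi}+\ldots = \frac{f''(0)A^2}{6\lambda^2}e^{-2\lambda\xi}+o(e^{-2\lambda\xi})$ gives $C(t) = -\frac{f''(0)A^2}{12\lambda^2}e^{-2\lambda\xi}(1+o(1))$ and then $\xi'(t) = \hat c\,e^{-3\lambda\xi(t)}(1+o(1))$, whose integration yields the stated $\ln t/(3\lambda)$ asymptotics. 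The $C^3$ assumption controls the $O(e^{-3\lambda x})$ remainder in the tail of $V$, while $f''(0)>0$ ensures $\hat c>0$ so the growth $\xi\to\infty$ is consistent.

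The main obstacle is making this matched expansion rigorous: one must propagate the outer approximation $w(x,t)\approx C(t)e^{-\lambda x}$ with an error small enough that the boundary terms $w_x(0)V'(-\xi) - w(0)V''(-\xi)$ are captured up to a factor $1+o(1)$, i.e., up to $o(e^{-2\lambda\xi})$ in case (i) and $o(e^{-3\lambda\xi})$ in case (ii). My preferred route is to construct explicit sub- and supersolutions of the form $V(\cdot-\xi_\pm(t)) + C_\pm(t)e^{-\lambda x}$, augmented by a second-order tail in case (ii), and use strong comparison to sandwich $\xi(t)$ between $\xi_\pm(t)$ whose ODEs differ only by factors $1+o(1)$ from the model ODE above; this reduces the proof to an ODE comparison. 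An alternative is a refined orthogonality-based estimate propagating smallness of $w$ in a weighted $H^2$ norm tuned to the spectral gap of the linearization around $V$, combined with parabolic boundary regularity to extract $w(0,t)$ and $w_x(0,t)$ precisely. The delicate double cancellation at $b\lambda=1$ is technically the most demanding point, since one must preserve the exact boundary cancellation through second order while still controlling the $O(e^{-3\lambda\xi})$ remainder.
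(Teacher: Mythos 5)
Your formal derivation is essentially the paper's argument in a slightly different dress, and the computations check out: projecting onto the translation mode, the motion is driven entirely by the Robin mismatch of $V(\cdot-\xi)$ at $x=0$, and your boundary-layer coefficient $C(t)=-\frac{A(1-b\lambda)}{1+b\lambda}e^{-\lambda\xi}$ is exactly $-B(\xi)$ in the paper's ansatz $\Phi(x,\xi)=V(\xi-x)-B(\xi)e^{-\lambda x}$; your identity $w_x(0)V'(-\xi)-w(0)V''(-\xi)=2\lambda^2(1-b\lambda)A^2(1+b\lambda)^{-1}e^{-2\lambda\xi}+\dots$ reproduces the paper's $(R,\Phi_\xi)$, and both $c(b)$ and $\hat c$ come out right (your second-order tail coefficient $-f''(0)A^2/(6\lambda^2)$ is in fact the correct one). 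The difference is that the paper builds the boundary correction into the ansatz, so that the perturbation $v=u-\Phi$ satisfies the \emph{homogeneous} Robin condition and the driving term is the explicitly computable inner product $(R,\Phi_\xi)$, whereas you keep the bare ground state and must then extract $w(0,t)$, $w_x(0,t)$ to precision $o(e^{-2\lambda\xi})$ (resp.\ $o(e^{-3\lambda\xi})$) by matching. Also note that a multiplicative $(1+o(1))$ error in the ODE suffices for the stated $o(1)$ in $\xi(t)$, so your integration step is fine.

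The genuine gap is the quantitative control of the perturbation, which is the bulk of the paper's Section 6 and which your proposal only names without executing. Concretely, to justify discarding $\int N(w)V'(\cdot-\xi)$ and the coupling terms, and to show that $w$ really equals $C(t)e^{-\lambda x}$ plus something of strictly smaller order near $x=0$, one needs an estimate of the type $\|u(\cdot,t)-\Phi(\cdot,y(t))\|=O(\sqrt{1+y}\,e^{-2\lambda y}+e^{-\nu t})$; the paper obtains this by decomposing $v$ along the first two eigenfunctions of $\cL^{\xi}$ (whose eigenvalues converge to $\mu_1>0$, $\mu_2=0$, with the rest of the spectrum below $\mu_3<0$) and running three coupled differential inequalities, together with a tubular-neighborhood lemma guaranteeing that the orthogonal decomposition is well defined and $C^1$ in $t$ — a point you also skip. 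Moreover, your \emph{preferred} route to close this gap, global sub/supersolutions of the form $V(\cdot-\xi_\pm(t))+C_\pm(t)e^{-\lambda x}$, faces a structural obstruction: the time derivative of such a barrier contains $-\dot\xi_\pm V'(x-\xi_\pm)$, and $V'(x-\xi_\pm)$ changes sign at the pulse maximum, so adjusting $\dot\xi_\pm$ improves the differential inequality on one side of $x=\xi_\pm$ while ruining it on the other. This is precisely why metastable pulse motion is treated by projection/spectral methods rather than comparison; as stated, that route would fail, and your ``alternative'' (weighted spectral estimates) is the one that must actually be carried out — at which point it coincides with the paper's proof.
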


Note that if $b \lambda >1$, by Theorem~\ref{thm:main2}, convergence to a shifted ground state cannot take place with an infinite shift. We also point out that the first order term only depends on $\lambda= \sqrt{-f'(0)}$. In particular, although the value of $b$ is important to determine whether the convergence to a ground state occurs with an infinite shift, the properties of the motion ultimately do not depend so much on the boundary condition.

\begin{remark}\rm
Assertions in Theorem \ref{thm:main3} (ii) extend to the case when $f$ satisfies, for some integer $k\geqslant 2$,
\begin{equation*}\leqno{\bf(Fk)}
{\ } \qquad
 f\in C^{k+1}([0,\infty)),\ f^{(j)}(0)=0\mbox{ for } j=2,3,\cdots,k-1,\\
 \mbox{\ and } f^{(k)}(0)  >0.
\end{equation*}
The analogous  conclusion is that
\bess
\xi (t)= \frac{\ln (t)}{(k+1)\lambda} + \frac{\ln[(k+1)\lambda c_k]}{(k+1)\lambda} + o(1) \quad
\mbox{ as } t\to \infty, \eess
where
$$
c_k : =\frac{f^{(k)}(0) A^{k+1}}{2(k+1)!\int_0^\theta\sqrt{F(s)}ds}.
$$
\end{remark}


 When $b=\infty$, i.e., the Neumann boundary condition
 $u_x(0,t)=0$,  by even reflection, the problem is equivalent to the Cauchy problem
\begin{equation}\label{p-Cauchy}
\left\{
\begin{array}{ll}
 u_t = u_{xx} + f(u), &  x\in \R,\ t>0,\\
 u(x,0) = u_0 (x),& x\in \R
\end{array}
\right.
\end{equation}
with even initial data.
This Cauchy problem has been extensively studied. The  classical papers of Aronson and
 Weinberger \cite{AW1, AW2}
contain  systematic investigation of problem \eqref{p-Cauchy}, with
various sufficient conditions for {\bf spreading} (also known as {\bf propagation})
and {\bf vanishing} (also known as {\bf extinction}).
For other related works, see
\cite{CP,Chen1,DuLou, DM, FM} and the references therein.

Our present work is motivated by Zlato\v{s} \cite{Zla}, Du and Matano \cite{DM} and
Fa\u{s}angov\'{a} and Feireisl \cite{FF}. In \cite{DM}, motivated by a fundamental  result of
Zlato\v{s} \cite{Zla}, a complete description of the asymptotic behavior of the
solution of  \eqref{p-Cauchy} was given (see also Chen \cite{Chen1} for the case of a bistable nonlinearity). More precisely, the authors
first proved that any bounded solution of~\eqref{p-Cauchy} converges to
a stationary one, that is, a solution of
\bess v_{xx} + f(v)=0 \quad\forall\,  x\in \R.
\eess 
When $f$ is of bistable or combustion type, they established  a sharp transition
result: for any nontrivial $\phi \geqslant 0$ with compact support, there exists a sharp
threshold value $\sigma^* >0$ such that spreading happens for $u(\cdot, t;\sigma \phi)$
(the solution of  \eqref{p-Cauchy} with initial data $u_0 = \sigma \phi$) when $\sigma >\sigma^*$,
vanishing happens for $u(\cdot,t;\sigma \phi)$ when $\sigma<\sigma^*$, while
for the threshold value $\sigma^*$, $u(\cdot,t;\sigma^*\phi)$ converges to a
 ground state in the unbalanced  bistable case, or to the ignition point in the combustion case.
  Our theorems extend these sharp transition results
from \eqref{p-Cauchy} to problem \eqref{p}. In our framework, new difficulties arise from the fact that in the transition case, the limiting ground state may be drifting far away from the reference frame, so that a new argument than in \cite{DM, Zla} will be needed.

In \cite{FF}, the authors also studied problem \eqref{p} with Dirichlet boundary condition ($b=0$)
and with bistable $f$ as in {\bf (F)}. They proved that,
for any nonnegative function $\phi \in H^1_0 ([0,\infty))$ which is increasing in $[0,x_0]
(x_0 >0)$ and decreasing in $[x_0, \infty)$, there exist $\sigma_* := \sigma_*(\phi)$ and
$\sigma^* := \sigma^*(\phi)$ with $0<\sigma_* \leqslant \sigma^*$
such that vanishing happens when $\sigma < \sigma_*$, and spreading happens when $\sigma>\sigma^*$. In the transition case $\sigma \in [\sigma_* , \sigma^*]$, the solution converges to a ground state with infinite shift, that is there exists a function $\xi (\cdot)$ satisfying $\lim_{t\to\infty} \xi (t)=\infty$ such that
$\|u(\cdot,t;\sigma \phi)-V(\cdot-\xi (t))\|_{H^1([0,\infty))} \to 0$ as $t\to\infty$. This is quite different from the
problem with Neumann boundary condition  where, in the transition case, $u$ converges to a finitely shifted ground state (c.f. \cite{DM, FF}). Note that \cite{FF} left two important  open problems:
\begin{itemize}
\item[\rm 1.] Whether $\sigma_* =\sigma^*$ or not ?
\item[\rm 2.] How fast does $\xi (t)\to \infty$ as $t\to \infty$ ?
\end{itemize}

This paper is devoted to the connection between the problems with Neumann and Dirichlet boundary conditions, by dealing with the whole range of Robin boundary conditions ($b \geqslant 0$). Theorem~\ref{thm:main} shows that, whatever the boundary condition is, the threshold is always sharp. In terms of applications, this means that the only reasonable outcomes are spreading and vanishing. This is in accordance with prior results in the Neumann boundary condition case. In particular, when $b=0$, Theorem \ref{thm:main} answers positively to the first question above. We point out that our approach even deals with initial data which admit more than a single local maximum point, which were not considered in~\cite{FF}.

However, we already know that the long time behavior of the solution in the transition case does depend on the boundary condition~\cite{FF}. Indeed, although we establish that the solution always converges to a ground state, this convergence may occur with either a finite of infinite shift. A natural question is:
\begin{itemize}
\item[\rm 3.] In the transition case, does the solution approach a finitely shited ground state
$V(\cdot-z)\ (z\in\Shift_{ground} (b))$ or an infinitely shifted ground state $V(\cdot-\xi (t))$ with $\lim_{t\to\infty}\xi (t)=\infty$?
\end{itemize}
Theorem~\ref{thm:main2} answers this question by providing explicit and complete criteria for both cases to occur. In summary, there exists a partition of $[0,\infty]$ into three intervals $I_1 \ni 0$, $I_2$ and $I_3 \ni \infty$  such that:
\begin{itemize}
\item if $b \in I_1$, the shift is always infinite;
\item if $b \in I_3$, the shift is always finite;
\item if $b \in I_2$, both may happen.
\end{itemize}
Whether those intervals are closed or open depends on the shape of $f$, and can be easily determined from
Theorem~\ref{thm:main2}. For instance, when $f$ satisfies $\sqrt{F(s)} < \lambda s$
(a function with the form $s(s-\alpha) (1-s)$ for $s\in [0,1]$ is a typical
example), $I_1 =[0,1/\lambda]$, $I_2 = \emptyset$ and $I_3 = (1/ \lambda , \infty]$.

Lastly, when convergence occurs with an infinite shift, Theorem~\ref{thm:main3} provides an approximation of $\xi(t)$ as $t \to +\infty$ up to the order~$o(1)$, using slow motion on ``center manifold" technique, developed by Carr and Pego \cite{CP}, Fusco and Hale \cite{FuscoHale}, Alikakos, Bates and Fusco~\cite{ABF}, Alikakos and Fusco~\cite{AF}, and Chen et al. \cite{Chen1,Chen2}. In particular, we observe that the motion is asymptotically slow, and that the shift grows logarithmically with respect to time. As the Dirichlet boundary problem is a particular case of \eqref{p} with $b=0$, this also answers the second question raised in~\cite{FF}. It is interesting to note that, as we pointed out before, while the value of $b$ determines whether the motion takes place or not in the transition case, it has little influence on the motion itself.\\

This paper is arranged as follows. In \S2 we give some preliminaries, including a classification of the steady states and an introduction to the zero number argument, which plays an important role in our proofs. In \S3 we give a first convergence result to an a priori not known steady state, and study spreading and vanishing. The last three sections all deal with the transition case. In~\S4, we prove convergence to a shifted ground state, and infer that the threshold is sharp. In \S5 we investigate whether the shift is finite or not. In \S6, we consider the former case and estimate $\xi(t)$ as $t \to +\infty$.


\section{Preliminaries}

\subsection{Decay Rate at $x=\infty$}

\begin{lem}\label{lm:u to 0} Assume {\bf (F)} and $u_0\in \mathscr{X}(h)$ with $h>0$.
Then the solution $u$ of \eqref{p} satisfies
\begin{equation}\label{uest}
u(x,t),\ |u_x(x, t)|,\ |u_{xx}(x,t)|,\ |u_t(x,t)| <
C(t) e^{-\frac{x^2}{16t}} \quad \forall\, t\geqslant0,\  x >2 h.
\end{equation}
\end{lem}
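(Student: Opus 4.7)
The plan is to combine a Gaussian super-solution argument on the half-line with standard interior parabolic regularity.

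First I establish a uniform $L^\infty$ bound and reduce the equation to its linear part. By (F), $f(K)\leq 0$ for $K:=\max(\|u_0\|_\infty,1)$, so the constant $K$ is a super-solution of \eqref{p} (the Robin condition being trivially satisfied). The parabolic comparison principle, together with Hopf's lemma at $x=0$, then gives $0\leq u(x,t)\leq K$ on $[0,\infty)\times[0,\infty)$. Since $f\in C^1$ with $f(0)=0$, there exists $\hat L\geq 0$ such that $f(s)\leq\hat L s$ for every $s\in[0,K]$; setting $v(x,t):=e^{-\hat L t}u(x,t)$, a direct computation shows $v_t-v_{xx}\leq 0$ on $(0,\infty)\times(0,\infty)$ with the same Robin boundary condition and initial datum, i.e.\ $v$ is a sub-solution of the linear heat equation on the half-line under the Robin condition.

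Next I construct a Gaussian super-solution via even reflection. Let $\tilde u_0$ be the even extension of $u_0$ to $\R$ (supported in $[-h,h]$, bounded by $U_0:=\|u_0\|_\infty$), and define
\[
\bar v(x,t):=\int_{\R}G(x-y,t)\tilde u_0(y)\,dy,\qquad G(z,t):=\frac{1}{\sqrt{4\pi t}}e^{-z^2/(4t)}.
\]
Then $\bar v$ is an exact solution of the heat equation on $\R\times(0,\infty)$; by the evenness of $\tilde u_0$ one has $\bar v_x(0,t)=0$, so that $\bar v(0,t)-b\bar v_x(0,t)=\bar v(0,t)>0$, while $\bar v(\cdot,0)=u_0$ on $[0,\infty)$. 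The parabolic maximum principle applied to $w:=\bar v-v$, with Hopf's lemma ruling out a negative minimum on the boundary $x=0$ (where the Robin quantity for $w$ equals $\bar v(0,t)>0$), gives $v\leq\bar v$ on $[0,\infty)\times[0,\infty)$. For $x>2h$ and $y\in[-h,h]$ we have $|x-y|\geq x-h\geq x/2$, whence
\[
\bar v(x,t)\leq \frac{U_0}{\sqrt{4\pi t}}\int_{-h}^{h}e^{-(x-y)^2/(4t)}\,dy\leq \frac{2hU_0}{\sqrt{4\pi t}}\,e^{-x^2/(16t)},
\]
and returning to $u=e^{\hat L t}v$ produces the desired pointwise bound on $u$ with $C(t):=2hU_0e^{\hat L t}/\sqrt{4\pi t}$ (the strict inequality following from the strong maximum principle for $x>2h$).

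The bounds on $u_x$, $u_{xx}$ and $u_t$ then follow from standard interior parabolic regularity. On a small parabolic cylinder $Q_r(x_0,t_0)\subset\{x>2h,\,t>0\}$, Schauder (or $W^{2,1}_p$) estimates applied to $u_t=u_{xx}+f(u)$, using $u\in[0,K]$ and the smoothness of $f$, bound the three derivatives at $(x_0,t_0)$ in terms of $\|u\|_{L^\infty(Q_r(x_0,t_0))}$. Choosing $r=r(x_0,t_0)$ small enough that the Gaussian factor $e^{-x^2/(16t)}$ varies by a bounded multiplicative factor across $Q_r$ (one may alternatively apply the same super-solution argument to the linear parabolic equations satisfied by $u_x$, $u_{xx}$ and $u_t$, whose coefficients are bounded), the pointwise bound on $u$ transfers to the three derivatives with a possibly enlarged time-dependent prefactor $C(t_0)$. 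The main delicate point of the argument is the comparison $v\leq\bar v$: the super-solution $\bar v$ does not satisfy the Robin boundary condition with equality but only as the strict inequality $\bar v(0,t)-b\bar v_x(0,t)>0$, and Hopf's lemma is essential to exclude a negative minimum of $w$ from being attained on the Robin boundary $x=0$.
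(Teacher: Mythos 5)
Your argument is correct and coincides with the paper's: both dominate the nonlinearity by a linear term ($f(s)\le\hat L s$ on $[0,K]$, the paper's $K_1=\max f'$), compare the Robin solution with the solution of a linear heat-type equation on the whole line driven by the even extension of $u_0$, read off the Gaussian decay for $x>2h$ from the explicit kernel, and then invoke interior Schauder estimates for the derivatives. The only cosmetic difference is that you factor out $e^{\hat L t}$ before comparing, whereas the paper keeps the term $K_1 w$ in the comparison function; otherwise the proofs, including the careful handling of the Robin boundary via Hopf's lemma, are the same.
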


\begin{proof}  Note first that, since $b \geqslant 0$ and $f (s) \leqslant 0$ for all $s \geqslant 1$, it is clear that $\max\{1,\|u_0 \|_\infty\}$ is an upper solution of~\eqref{p}, hence $u (x,t) \leqslant \max \{ 1, \| u_0 \|_{L^\infty} \}$ for all $x \geqslant 0$ and $t \geq0$. This in particular proves, as mentioned before, that the solution~$u$ is uniformly bounded.

Now set $K_1:=\max_{0\leqslant s\leqslant \max\{1,\|u_0\|_\infty\}} f'(s)$ and let $w$ be the solution of
$$
\left\{
 \begin{array}{ll}
   w_t = w_{xx} + K_1w \quad&\forall\,  x\in \R,\ t>0,\\
   w(x,0) = {u}_0 (|x|) &\forall\, x\in \R.
 \end{array}
 \right.
$$
Since $b \geqslant 0$ and by the comparison principle, for any $t>0$ and $x> h$,
\begin{equation}\label{decay-1}
0< u(x,t)  \leqslant   w (x,t) =  \int_{-h}^{h}
u_0 (|y|)\frac{ e^{K_1t-\frac{(x-y)^2}{4 t}}} {\sqrt{4\pi t}}\, dy
\leqslant  \frac{e^{K_1t- \frac{(x-h)^2}{4t} } h\|u_0\|_{\infty} }{\sqrt{\pi t}} .
\end{equation}
The estimate in \eqref{uest} for $u$ follows immediately.
Other conclusions follow from the interior Schauder estimates (c.f. \cite{FM}).
This proves the lemma.
\end{proof}

\subsection{Steady States}\label{subsec:stationary sol} Each solution of
 $v''+f(v)=0$ corresponds to a trajectory $v'{}^2=F(v)-q$ in the $v$-$v'$ phase plane where $q$ is a constant and
 $F(u):=-2\int_0^u f(s)ds$. Furthermore, the boundary condition $v(0)=b v'(0)$ is satisfied whenever the corresponding trajectory intersects the line $v=bv'$. Note that such an intersection may not be unique, so that several steady states of \eqref{p} can be derived from the same trajectory.

As explained before, we are only interested in bounded and nonnegative steady states, that is solutions of~\eqref{stationary}, which thanks to the argument above can be listed as in the lemma below. We refer to the phase plane in Figure \ref{Fig1.1} and omit the details of the proof.

\begin{figure}[tbp]
\begin{center}
\includegraphics[width=2.8in,height=2.8in]{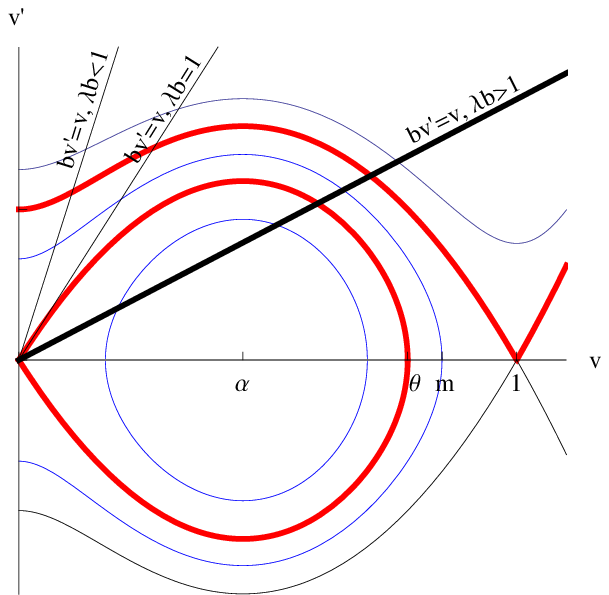}\quad
\includegraphics[width=2.8in,height=2.8in]{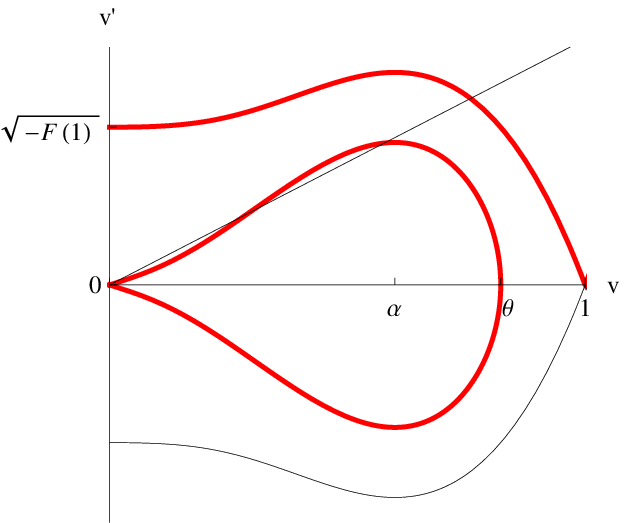}
\end{center}
\caption{\small Trajectories $v'{}^2=F(v)-q$ on phase plane for  $v''+f(v)=0$. \newline  Left: $\frac{\sqrt{F(s)}}{s}$ is strictly monotonic on $[0,\alpha]$; Right: $\frac{\sqrt{F(s)}}{s}$ is not monotonic.}
\label{Fig1.1}
\end{figure}

\begin{lem}\label{limitsol}  Assume {\bf (F)} and $b\geqslant 0$. Then all solutions of the stationary problem \eqref{stationary} are of one of the following types:
\begin{enumerate}
\item[{\rm(1)}]  {\bf Trivial Solution:}  $v\equiv 0$.
\item[{\rm(2)}] {\bf Active States:} $v (\cdot) \equiv v_* (\cdot - z)$,  where $v_*$ is the unique solution of
    \bess
    v_*''+f(v_*)=0<v_*' \hbox{ \ and \ } v_*'^2 = F(v_*) - F(1) \hbox{ \ in \ } [0,\infty), \quad v_*(0)=0, \quad v_*(\infty)=1,\eess
and
$$z \in \Shift_{active} (b) := \{ z < 0 \; | \ v_* (-z)= b v_* ' (-z) \} \neq \emptyset .$$
\item[{\rm(3)}] {\bf Ground States:} $v(\cdot) \equiv V (\cdot - z)$ where $V$ is the unique (symmetrically decreasing) solution of
\bess  V''+f(V)=0<V \hbox{ \ and \ }  V'^2 = F(V) \hbox{ \ in  \ } \R , \quad V (0)=\theta, \quad V(\pm \infty)=0,\eess
and
$$z \in \Shift_{ground} (b) := \{ z \in \R \, | \ V (-z) = b V' (-z) \} \subset (0,\infty).$$
The set $\Shift_{ground} (b)$ is not empty if and only if there exists $s_0 \in (0,\theta)$ such that $b = \frac{s_0}{\sqrt{F (s_0)} }$.

    \item[{\rm(4)}] {\bf Positive Periodic Solutions:} $v>0$ oscillates around $\alpha$ and satisfies $v'{}^2=F(v)-F(m)$ with $m\in[\alpha,\theta)$. These solutions exist when $b$ is large.
\end{enumerate}
\end{lem}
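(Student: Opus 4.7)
The plan is a standard phase plane analysis of the ODE $v'' + f(v) = 0$ under the positivity and boundedness constraints. Multiplying by $v'$ and integrating once yields the conserved energy
\begin{equation*}
(v')^2 = F(v) - q,
\end{equation*}
with $q \in \R$ constant along each trajectory. Hypothesis {\bf (F)} pins down the shape of $F$ on $[0, \infty)$: $F(0) = F'(0) = 0$, $F''(0) = 2\lambda^2 > 0$, $F$ is strictly increasing on $[0,\alpha]$ to a positive maximum $F(\alpha)$, strictly decreasing on $[\alpha, 1]$ past its unique zero $\theta$ down to the negative minimum $F(1)$, and strictly increasing on $[1, \infty)$. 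Combined with $f'(0) = -\lambda^2 < 0$, this identifies $(0,0)$ and $(1,0)$ as saddles of the system $v' = w$, $w' = -f(v)$, and $(\alpha, 0)$ as a center.

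The next step is to enumerate all bounded orbits that remain in $\{v \geqslant 0\}$ by scanning $q$. Only four ranges yield admissible orbits. For $q = 0$, the level set consists of the equilibrium $(0,0)$ together with the homoclinic loop at this saddle surrounding $(\alpha, 0)$ and reaching $v = \theta$; this gives the trivial solution and all shifts of the symmetrically decreasing ground state $V$. For $q = F(1)$, the stable/unstable manifold of the saddle $(1, 0)$ has an upper branch running monotonically from $(0, \sqrt{-F(1)})$ to $(1, 0)$, producing the active state $v_*$ with $v_*(0) = 0$, $v_*(\infty) = 1$, and its leftward shifts $v_*(\cdot - z)$ with $z \leqslant 0$. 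For $q \in (0, F(\alpha))$, the orbit is a closed curve around the center, giving a positive periodic solution oscillating between turning points $m_1 < \alpha < m_2 < \theta$ with $F(m_1) = F(m_2) = q$. All other values are ruled out: if $q > F(\alpha)$ the admissible orbit lies in $\{v > 1\}$ and is unbounded; if $q \in (F(1), 0)$ the orbit must cross $v = 0$ with $|v'| = \sqrt{-q} > 0$ and so leaves the half plane $\{v \geqslant 0\}$; and if $q < F(1)$ the set $\{F \geqslant q\} \cap [0, \infty)$ is all of $[0, \infty)$, so $v'$ never vanishes and $v$ escapes to infinity.

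Finally I would impose the Robin condition $v(0) = b v'(0)$ on each family. No nontrivial equilibrium solution is admissible since $\alpha, 1 > 0$ while the condition with $v' \equiv 0$ forces $v(0) = 0$. For ground state shifts $V(\cdot - z)$, the requirement $b \geqslant 0$ together with $V' < 0$ on $(0, \infty)$ and $V$ even confines admissible shifts to $z > 0$, and substituting $(V')^2 = F(V)$ into $V(-z) = b V'(-z)$ produces the characterization \eqref{ground_set} together with the existence criterion $s = b\sqrt{F(s)}$ for some $s \in (0, \theta)$. For $v_*(\cdot - z)$, writing $y = -z \geqslant 0$, the continuous map $y \mapsto v_*(y)/v_*'(y)$ runs from $0$ at $y = 0$ to $+\infty$ as $y \to \infty$ (since $v_* \to 1$ and $v_*' \to 0$), so the intermediate value theorem yields $\Shift_{active}(b) \neq \emptyset$ for every $b \geqslant 0$. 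For periodic orbits surrounding $(\alpha, 0)$ with $\alpha > 0$, the Robin condition amounts to intersecting the closed orbit with the ray $\{v = b v',\ v' \geqslant 0\}$, which happens only when the slope $1/b$ is small enough for the ray to penetrate the orbit, i.e.\ when $b$ is large.

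The main obstacle will be the bookkeeping rather than any single estimate: one must carefully track the sign of $v'$ along each orbit and verify that the values $q \in (F(1), 0)$ and $q < F(1)$ produce no admissible bounded nonnegative trajectory, and correctly identify the sign of the admissible shifts $z$ in each family so that nonnegativity is preserved on all of $[0, \infty)$. Once these exclusions are in place the classification reduces to the picture in Figure~\ref{Fig1.1}.
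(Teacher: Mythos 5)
Your proposal is correct and takes essentially the same route as the paper, which reduces the classification to the first integral $v'^2=F(v)-q$ and the intersection of trajectories with the line $v=bv'$, then simply refers to the phase plane of Figure~\ref{Fig1.1} and omits the details. Your scan over $q$, the exclusion of the unbounded and sign-changing orbits, and the case-by-case analysis of the Robin condition on each family supply exactly those omitted details.
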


As can be seen from our main results, only the first three types will appear in the long time behavior of solutions of the evolution problem.

In our proofs, we will also need a few other properties. First, we remark that $V$ is
  given implicitly  by
    \bess  |z|=\int^\theta_{V(z)} \frac{ ds}{\sqrt{F(s)}}=-\frac{1}{\lambda} \ln \frac{V(z)}{\theta} + \int^\theta_{V(z)}\Big(\frac{1}{\sqrt{F(s)}}-\frac{1}{\lambda s} \Big){ds},\eess
    where $\lambda=\sqrt{-f'(0)}$.
The asymptotic behavior of $V$ easily follows:
    \bes V(\pm z)= Ae^{-\lambda z} +O(e^{-2\lambda z}),\qquad
    V'(\pm z)=\mp A\lambda e^{-\lambda z} +O(e^{-2\lambda z})\hbox{ \ as \ }z\to\infty,\label{asmpV}\ees
where $$A :=\displaystyle \theta e^{\int_0^\theta [\frac{\lambda}{\sqrt{F(s)}}-\frac{1}{ s}]ds}.$$
When {\bf (Fk)} holds, more precise formulas can be obtained:
  \bess
\begin{array}{l}
V(\pm z)= Ae^{-\lambda z} -H_k e^{-k\lambda z}+ O(e^{-(k+1)\lambda z}) \vspace{5pt}\\
    V'(\pm z)=\mp A\lambda e^{-\lambda z} \pm  k\lambda H_k e^{-k\lambda z} +O(e^{-(k+1)\lambda z})
\end{array}
 \hbox{ \ as \ }z\to\infty ,
\eess
   where
   \bess H_k := \frac{f^{(k)}(0) A^k}{\lambda (k+1)! (k-1)}. \eess

We conclude this section by noting that, for each $m\in(\theta,1)$, the trajectory passing through the point $(v=m,v'=0)$ in the phase plane gives a function $v_m$ satisfying
\bes\label{finite}
\ \ \ v_m''+f(v_m)=0<v_m \leqslant m\hbox{ \ in  \ }(0,2L_m), \quad v_m (0)=v_m (2L_m)=0,
\ees
where  \bes\label{Lm} L_m:=\int_0^m\frac{ds}{\sqrt{F(s)-F(m)}} \in (0,\infty).  \ees
Although this cannot be extended as a nonnegative steady state of~\eqref{stationary}
(note that $v_m ' (0) > 0 > v'_m (2L_m)$), it can be extended continuously on the whole real line by letting
\bes\label{extended_vm}
 v_m\equiv 0 \hbox{ \ on \ } (-\infty,0]\cup[2L_m,\infty).
\ees
This is a standard procedure to construct generalized lower solutions for the reaction-diffusion equation,
which will also prove useful in our framework.

\subsection{Zero Number of $u_x(\cdot,t)$}
We use $\mathcal{Z}(w)$ to denote the number of sign changes of a
continuous function $w(x)$ defined on the whole domain $ [0,\infty)$. Note that, if the zeros
of $w$ are all simple, then $\mathcal{Z}(w)$ coincides with the number of zeros of
$w$ in $(0,\infty)$. We also use the notation $\mathcal{Z}_I (w)$ to denote the number
of sign changes of $w$ on an interval $I\subseteq [0,\infty)$.

\begin{lem}[Lemma 2.3 in \cite{DM}]\label{lem:zero}
Let $w(x,t) \not\equiv 0$ be a solution of  the equation
\bes\label{eq-w}
w_t=w_{xx}+c(x,t)w  \quad\forall\, t\in(t_1,t_2), \; x\in I \subseteq [0,\infty),
\ees
where $c(x,t)$ is bounded. Suppose that, on the boundary of $I$, either $w \equiv 0$ or $w$ never vanishes. Then, for each $t\in(t_1,t_2)$, the zeros of the function $w(\cdot,t)$ do not
accumulate in $\bar{I}$. Furthermore,
\begin{itemize}

\item[{\rm(1)}] $\mathcal{Z}_I(w(\cdot,t))$ is nonincreasing in $t$;

\item[{\rm(2)}] if $w(x^*,t^*)=w_x(x^*,t^*)=0$ for some $t^* \in (t_1,t_2)$ and $x^*\in \bar{I}$, then
\[
\mathcal{Z}_I(w(\cdot,t)) > \mathcal{Z}_I(w(\cdot,s)) \mbox{ for all } t\in(t_1,t^*),\ s\in(t^*,t_2)
\]
whenever $\mathcal{Z}_I (w(\cdot,s))<\infty$.
\end{itemize}
\end{lem}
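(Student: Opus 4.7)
The plan is to follow the classical Sturm/Angenent framework for intersection-comparison in parabolic equations, and then address what is genuinely new here, namely the way the boundary hypothesis on $\partial I$ prevents zeros from being created or destroyed through the endpoints. There are three pieces to assemble: (a) at every fixed time the zero set of $w(\cdot,t)$ is discrete in $\bar I$; (b) $t \mapsto \mathcal{Z}_I(w(\cdot,t))$ is nonincreasing in $t$; and (c) the count drops strictly across any time at which $w$ has a zero of multiplicity $\geqslant 2$ in $\bar I$.

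For (a), I would argue by contradiction: if $(x_n)\subset \bar I$ were a sequence of distinct zeros of $w(\cdot,t_0)$ accumulating at some $x_\infty \in \bar I$, then $w(\cdot,t_0)$ would vanish to infinite order at $x_\infty$, and by the spatial analyticity of solutions of \eqref{eq-w} at fixed time (applicable since $c$ is bounded and $I$ is one-dimensional, using classical results of Angenent), $w(\cdot,t_0)$ would vanish on the entire connected component of $I$ containing $x_\infty$. Combined with the boundary hypothesis (either $w\equiv0$ or $w$ never vanishes on $\partial I$), unique continuation backwards and forwards in time would then force $w\equiv 0$ on $I\times(t_1,t_2)$, contradicting $w\not\equiv 0$. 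In particular $\mathcal{Z}_I(w(\cdot,t))$ is finite whenever it is locally finite, and isolated zeros can be followed by the implicit function theorem as long as they remain simple.

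For (b) and (c), I would track how $\mathcal{Z}_I(w(\cdot,t))$ can change as $t$ varies. Away from times where multiple zeros occur, every zero in $\bar I$ is simple and can be followed as a $C^1$ curve $x=\gamma_j(t)$ (by the implicit function theorem applied to $w=0$), so the count is locally constant, provided no zero escapes through $\partial I$. The boundary assumption prevents this: at any boundary point $x_0\in\partial I$, either $w\equiv 0$ there (so no simple zero can ever approach or retreat from it without ceasing to be simple, which is the multiple-zero case) or $w$ stays bounded away from zero in a neighborhood, forbidding any zero from crossing. At an exceptional time $t^*$ where $w(x^*,t^*)=w_x(x^*,t^*)=0$, I would invoke the local normal form: after freezing coefficients and using the spatial-analyticity/formal-expansion argument again, $w(x,t)\sim a_k(t-t^*)(x-x^*)^j + b(x-x^*)^{k}$ for some $k \geqslant 2$ and $j\geqslant 0$, so for $t$ slightly greater than $t^*$ the zero of multiplicity $\geqslant 2$ breaks into strictly fewer sign changes than the number of sign changes immediately before $t^*$; a precise count shows the strict drop claimed in (2).

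The main obstacle is treating the boundary endpoints carefully in the dichotomy ``$w\equiv 0$ or $w$ never vanishes'' on $\partial I$, since the standard Sturmian argument is usually stated for compactly supported perturbations or under zero Dirichlet data. The cleanest way around this is to reduce to Angenent's setting by extending $w$ across each boundary endpoint, either by the trivial extension when $w\equiv 0$ on that component, or by pulling the domain strictly inside when $w$ is bounded away from zero on that endpoint so that no zero ever reaches it; in both cases the classical statement applies and then transfers back to $I$. Once this reduction is clean, the rest of the argument is a routine application of the tools listed above, and of course one may simply cite \cite{DM}.
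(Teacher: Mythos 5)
The paper offers no proof of this lemma at all: it is quoted verbatim as Lemma~2.3 of \cite{DM} (which in turn rests on Angenent's theorem \cite{An}), so your closing remark that ``one may simply cite \cite{DM}'' is in fact exactly what the paper does, and your overall strategy is the standard Sturm--Angenent route that underlies that citation. If, however, you intend your sketch to stand as a self-contained argument, two of its justifications do not hold as stated. First, a merely \emph{bounded} coefficient $c(x,t)$ does not give spatial analyticity of $w(\cdot,t_0)$; analyticity requires analytic coefficients. Angenent's actual proof of non-accumulation and of the local structure at a degenerate zero does not use analyticity but a parabolic rescaling argument: either $w$ vanishes to infinite order at $(x^*,t^*)$ in the parabolic sense, in which case a backward unique continuation theorem forces $w\equiv 0$, or the blow-up limit is a nontrivial caloric (Hermite-type) polynomial, whose explicit zero pattern yields both the isolation of zeros and the strict drop in $\mathcal{Z}_I$ claimed in part (2). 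Second, your reduction at a Dirichlet endpoint by ``the trivial extension'' is not legitimate: extending $w$ by zero across a boundary point where $w(x_0,\cdot)\equiv 0$ does not produce a solution of a parabolic equation on a larger interval (one would need an odd reflection, which requires reflecting $c$ as well and is unnecessary here), and in any case Angenent's theorem is already formulated precisely under the dichotomy ``$w\equiv 0$ or $w$ never vanishes'' on the lateral boundary, so no extension is needed. Finally, note that $I$ may be unbounded here, which is why the statement only rules out accumulation of zeros \emph{in} $\bar I$ and why part (2) carries the caveat $\mathcal{Z}_I(w(\cdot,s))<\infty$; a complete argument should acknowledge that zeros may a priori accumulate at infinity.
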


\begin{lem}\label{lem:intersection}
Let $u$ be a solution of  \eqref{p} with $u_0\in \mscX(h)$ and $v$  be a solution of  \eqref{stationary}
or \eqref{finite}. Then $\mathcal{Z} (u(\cdot,t)-v(\cdot))$ is a finite and nonincreasing function of $t>0$.
\end{lem}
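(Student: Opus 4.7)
Set $w(x,t) := u(x,t) - v(x)$. Since $f \in C^1$ and both $u$ and $v$ are uniformly bounded---the former by $\max\{1,\|u_0\|_\infty\}$ via comparison (using $f(s) \leqslant 0$ for $s \geqslant 1$), the latter by the classification of Lemma~\ref{limitsol}---the difference $w$ solves the linear parabolic equation $w_t = w_{xx} + c(x,t)w$ on $(0,\infty) \times (0,\infty)$, with bounded coefficient
\[ c(x,t) := \int_0^1 f'\bigl(v(x) + s(u(x,t)-v(x))\bigr)\,ds. \]
This puts us within the scope of Lemma~\ref{lem:zero}, provided that the boundary behavior of $w$ is understood.

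The first step is to confine the zero set to a bounded spatial interval. Lemma~\ref{lm:u to 0} gives the Gaussian decay $u(x,t) \leqslant C(t) e^{-x^2/(16t)}$ for $x > 2h$, while the behavior of $v$ at $+\infty$ from Lemma~\ref{limitsol} splits as follows: for $v \equiv 0$ or $v = v_m$ (extended by $0$), $w = u \geqslant 0$ beyond the support of $v$; for an active state or a positive periodic solution, $v$ is bounded below by a positive constant and $u \to 0$, so $w(x,t) < 0$ for $x$ large; for a ground state, $v(x) \sim A e^{-\lambda x}$ still dominates the Gaussian tail of $u$, so again $w(x,t) < 0$ for $x$ large. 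In every case there exists $N = N(t)$ such that $w(\cdot,t)$ has constant sign on $[N,\infty)$. Finiteness of $\mathcal{Z}(w(\cdot,t))$ will then follow from the non-accumulation statement of Lemma~\ref{lem:zero} applied on the compact interval $[0,N]$.

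For monotonicity I would apply Lemma~\ref{lem:zero} on $I = [0,N]$. The hypothesis at $x = N$ is met by the constant-sign argument just sketched. At $x = 0$, when $v$ is a classical steady state of types~(1)--(4), $v$ shares the Robin condition with $u$, hence $w(0,t) = b w_x(0,t)$: if $b = 0$ this reads $w(0,\cdot) \equiv 0$ and the lemma applies directly; if $b > 0$ I would set $\mathcal{T} := \{t > 0 : w(0,t) = 0\}$, apply part~(1) of Lemma~\ref{lem:zero} on each connected component of $(0,\infty) \setminus \mathcal{T}$, and at each $t^* \in \mathcal{T}$ use the Robin identity $w_x(0,t^*) = w(0,t^*)/b = 0$ to produce a boundary double zero, from which part~(2) of Lemma~\ref{lem:zero} yields a strict drop of $\mathcal{Z}$ across $t^*$. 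Patching these intervals gives the desired nonincreasing monotonicity. For $v = v_m$ extended, $v_m(0) = 0$ gives $w(0,t) = u(0,t) > 0$ for all $t > 0$ by the strong maximum principle for the Robin problem, so the ``never vanishes'' hypothesis holds directly; one further splits $[0,N]$ at $x = 2L_m$, where on $[2L_m,N]$ we have $v_m \equiv 0$ and $w = u \geqslant 0$ without sign change, while on $[0,2L_m]$ the lemma applies classically since $w(2L_m,t) = u(2L_m,t) > 0$.

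The most delicate point I foresee is the Robin case with $b > 0$ in types~(1)--(4), where the boundary behavior of $w$ does not verbatim fit the dichotomy ``$w \equiv 0$ or $w$ never vanishes on $\partial I$'' required by Lemma~\ref{lem:zero}; its resolution is the boundary double-zero mechanism sketched above, which reduces the Robin case to a combination of parts~(1) and (2) of the lemma via the Robin identity $w_x(0,t) = w(0,t)/b$.
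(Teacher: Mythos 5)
Your overall strategy coincides with the one the paper intends: the paper gives no detailed argument for this lemma, merely asserting that it follows as in \cite[Lemma 2.9]{DM} ``using a combination of Lemma~\ref{lem:zero} and Hopf lemma.'' Your reduction to a linear equation for $w=u-v$, the confinement of the zero set via the Gaussian tail of $u$ versus the behavior of $v$ at infinity, and the splitting of the $v_m$ case at $x=2L_m$ are all correct and are exactly the routine parts. (One small slip: for $b=0$ and $v=v_m$ you have $w(0,t)=u(0,t)\equiv 0$, not $>0$; the lemma still applies via the ``$w\equiv 0$ on the boundary'' branch.)

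The genuine gap is at the point you yourself flag as delicate, and your proposed resolution does not close it. Lemma~\ref{lem:zero}(2) is only available \emph{under the standing hypothesis} that, throughout the time interval under consideration, $w$ either vanishes identically or never vanishes on the boundary of $I$. At a time $t^*\in\mathcal{T}$ with $w(0,t)\neq 0$ for nearby $t\neq t^*$, that hypothesis fails on every time interval containing $t^*$, so you cannot invoke part~(2) of Lemma~\ref{lem:zero} to get the strict drop across $t^*$; the lemma simply does not apply there. This is precisely where the Hopf lemma must enter: if $w(0,t^*)=0$ (hence $w_x(0,t^*)=0$ by the Robin identity) and $w$ kept a fixed sign in a one-sided parabolic neighborhood $\{0<x<\delta,\ t^*-\delta<t\leqslant t^*\}$, the Hopf boundary point lemma would force $w_x(0,t^*)\neq 0$, a contradiction; so $w$ must lose an interior sign change as the zero reaches the corner, which is what produces the strict decrease of $\mathcal{Z}$ (alternatively one invokes Angenent's zero-number theorem in the version that allows Robin/oblique boundary conditions, which is what \cite{An} is cited for). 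The same mechanism is what shows $\mathcal{T}$ is discrete --- a fact your ``patching'' step silently assumes: a priori $\mathcal{T}$ could accumulate, and then gluing the monotonicity over the components of $(0,\infty)\setminus\mathcal{T}$ is not automatic. Once the Hopf-lemma step is in place, each $t^*\in\mathcal{T}$ with $w(\cdot,t^*)\not\equiv 0$ costs at least one unit of the finite quantity $\mathcal{Z}$, so $\mathcal{T}$ is locally finite and the patching is legitimate.
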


This lemma can be proved in a similar way as that of \cite[Lemma 2.9]{DM}
(see also \cite{An, Ma}), using a combination of Lemma~\ref{lem:zero} and Hopf lemma.

\begin{lem}\label{lem:u decreases far}
Let $u$ be a solution of  \eqref{p} with $u_0\in \mscX(h)$. Then
\begin{enumerate}
\item[\rm (1)] $u_x(x,t)<0$ for all $x>h$ and all small $t>0$;

\item[\rm (2)] $\mathcal{Z} (u_x(\cdot, t))<\infty$ for all $t>0$;

\item[\rm (3)] there exist an integer $N>0$ and a time $T>0$ such that for all $t> T$,
$u_x(\cdot,t)$ has exactly $2N-1$ zeros: $0<\xi_1 (t)<\tilde{\xi}_1(t)<\xi_2 (t)<\tilde{\xi}_2(t)<\cdots
< \tilde{\xi}_{N-1}(t)<\xi_N(t)<\infty$; moreover, $u_{xx}(\xi_i(t),t)<0$, $u_{xx}(\tilde{\xi}_i(t),t)>0$,
and $\xi_i(t),\ \tilde{\xi}_i (t)\in C^1$ for each $i$.
\end{enumerate}
\end{lem}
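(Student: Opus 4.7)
My strategy is to prove the three parts in sequence, relying crucially on Lemma~\ref{lem:zero} applied to $w := u_x$, which satisfies the linearized equation $w_t = w_{xx} + f'(u(x,t))\, w$ with $f'(u)$ bounded since $u$ is uniformly bounded by Lemma~\ref{lm:u to 0}. For part~(1), the idea is that $u_0 \equiv 0$ on $[h,\infty)$, so on $[h,\infty)$ the solution $u$ is produced by pure diffusion (plus small nonlinear corrections) from the data supported in $[0,h]$. Writing $u$ through Duhamel's formula with the Robin heat kernel $G(x,y,t)$ on $[0,\infty)$, the principal term $\int_0^h G(x,y,t)\, u_0(y)\,dy$ is, for $x > h$, strictly decreasing in $x$ (its $x$-derivative behaves like $\int_0^h \frac{-(x-y)}{2t} G(x,y,t) u_0(y)\,dy$, up to exponentially small reflected contributions). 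The nonlinear Duhamel correction is $O(t)$ and preserves this strict decrease for small $t$. Alternatively, one may apply the parabolic maximum principle directly to $v := u_x$ on the subdomain $[h,\infty)\times(0,T]$: it satisfies $v_t=v_{xx}+f'(u)v$, vanishes initially, and its value at the lateral boundary $x=h$ is $v(h,t)=u_x(h,t)$, which is non-positive and small for small $t$ (after approximating $u_0$ by smooth data), forcing $v < 0$ in the interior.

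For part~(2), I apply Lemma~\ref{lem:zero} to $w = u_x$ on $(0,\infty)$. At the boundary $x=0$, the Robin condition combined with the strong maximum principle (when $b>0$) or the Hopf lemma (when $b=0$) yields $u_x(0,t)>0$ for every $t>0$; in particular $w(0,t)$ never vanishes. The lemma therefore guarantees that the zeros of $w(\cdot,t)$ are isolated for every $t>0$, hence finite in every compact spatial interval. Combined with part~(1), which for small $t$ confines the zeros of $w(\cdot,t)$ to the compact set $[0,h]$, this yields $\mathcal{Z}(u_x(\cdot,t))<\infty$ for small $t>0$. The nonincreasing-in-$t$ property stated in Lemma~\ref{lem:zero}(1) then extends the finiteness to every $t>0$.

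For part~(3), the nonincreasing integer-valued map $t\mapsto \mathcal{Z}(u_x(\cdot,t))$ must stabilize at some value $M$ for $t\geqslant T$. Since $u_x(0,t)>0$ while Lemma~\ref{lm:u to 0} gives $u_x(x,t)\to 0$ as $x\to\infty$ with $u>0$ (so $u_x<0$ near infinity), the profile $u_x(\cdot,t)$ crosses zero an odd number of times, and $M=2N-1$ for some $N\geqslant 1$. If any zero were a double zero at some time $t^*\geqslant T$, Lemma~\ref{lem:zero}(2) would strictly decrease $\mathcal{Z}$ after $t^*$, contradicting stabilization; hence every zero is simple, that is $u_{xx}\neq 0$ there. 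Ordering the zeros as $0<\xi_1(t)<\tilde{\xi}_1(t)<\cdots<\xi_N(t)$ and tracking the alternation of signs of $u_x$ (namely $+,-,+,-,\dots,-$) forces $u_{xx}(\xi_i,t)<0$ and $u_{xx}(\tilde{\xi}_i,t)>0$. The simplicity of each zero together with the implicit function theorem applied to $u_x(\xi,t)=0$ then yields the $C^1$ regularity of each curve $\xi_i(t),\tilde{\xi}_i(t)$. I expect the main technical difficulty to lie in part~(1): obtaining the \emph{sign} of $u_x$ just outside the initial support is delicate because the decay estimates in Lemma~\ref{lm:u to 0} only provide absolute-value bounds, and the argument must handle simultaneously the nonlinearity $f$ and the Robin boundary condition.
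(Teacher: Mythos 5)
Parts (2) and (3) of your proposal are correct and essentially coincide with the paper's argument: establish $u_x(0,t)>0$ by the Hopf lemma, apply Lemma~\ref{lem:zero} to $w=u_x$ to get finiteness and eventual stabilization of the zero number, rule out degenerate zeros after stabilization via Lemma~\ref{lem:zero}(2), and conclude the sign of $u_{xx}$ and the $C^1$ regularity by sign alternation and the implicit function theorem.

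Part (1), however, is where your argument has a genuine gap, in both of the routes you sketch. In the Duhamel route, the principal term $\int_0^h G(x,y,t)u_0(y)\,dy$ and its $x$-derivative are, for fixed $x>h$ as $t\to0$, only of size $O(e^{-c(x-h)^2/t})$, i.e.\ exponentially small; an error term that is merely ``$O(t)$'' is enormous by comparison, so the sign of the sum is not determined without showing that the nonlinear correction's derivative is exponentially small with at least the same rate and the right sign --- which you do not do. In the maximum-principle route on $[h,\infty)\times(0,T]$, you need the lateral boundary datum $v(h,t)=u_x(h,t)\leqslant 0$, but this is precisely (the endpoint case of) the statement you are trying to prove; asserting it is ``non-positive and small for small $t$'' is circular, and even granting it, the weak maximum principle with zero initial data only yields $v\leqslant 0$, not the strict inequality. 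The paper avoids both difficulties with a reflection argument: for each $a>h$, pick $X\in(0,h)$ with $u(X,0)>0$ and set $w(x,t):=u(x+X,t)-u(2a-X-x,t)$ on $[0,a-X]$; then $w(\cdot,0)\geqslant 0$ (the reflected argument lies beyond $h$ where $u_0$ vanishes), $w(0,t)>0$ for small $t$ by continuity, and $w(a-X,t)\equiv 0$, so the maximum principle gives $w\geqslant 0$ and the Hopf lemma at the zero boundary point $x=a-X$ yields $w_x(a-X,t)=2u_x(a,t)<0$. You should replace your part (1) by this (or an equivalent sign-determining) argument; the rest of your proof then goes through.
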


\begin{proof}
(1) Since $u_0 \in \mscX(h)$, there exists $X \in (0,h)$ such that $u(X,0)>0$. Then, for any $a >h$, there exists some small time $\tau >0$ such that $u (2a-X, t) < u (X,t)$ for any $t \in [0,\tau]$. Therefore, the function $w(x,t):=u(x+X,t)-u(2a-X-x,t)$ satisfies
$$
w(0,t)>0, \ w(a-X,t)\equiv 0 \ \  \forall t\in [0,\tau],\qquad w(x,0)\geqslant 0 \ \ \forall x\in [0,a-X].
$$
Using Hopf lemma on $w$, we get $w_x(a-X,t) =2u_x(a,t) <0$ for all $t\in (0,\tau]$.

One can also easily check (using again Hopf lemma), that $u_x (0,t) >0$ for all $t >0$. We can thus apply Lemma~\ref{lem:zero} and immediately obtain that (2) and the first part of (3) hold.
Finally, applying  Lemma  \ref{lem:zero} (2) to function $u_x$  we derive that $u_{xx}(\xi_i(t),t)<0<u_{xx}(\tilde{\xi}_i(t),t)$.
The $C^1$ regularity of $\xi_i (\cdot)$ and $\tilde{\xi_i} (\cdot)$ then follows from implicit function theorem for the algebraic equations $u_x(\xi_i(t),t)=0$ and $u_x (\tilde{\xi}_i (t),t)=0$.
\end{proof}


\section{The Spreading and Vanishing Cases}

\subsection{A Convergence Result in $C^2_{\rm loc}([0,\infty))$ Topology}

In this subsection we give a local uniform convergence result, which
is an analogue of \cite[Theorem 1.1]{DM} and \cite[Theorem 1.1]{DuLou}.

\begin{lem}\label{convergence} Assume {\bf(F)} and let $u$ be the solution of \eqref{p} with $u_0 \in \mscX(h)$. Then there exists a solution $v$ of \eqref{stationary} such that
\bes\label{omegalimit}
  \lim_{t\to\infty} \| u(\cdot,t)-v \|_{C^2([0,M])} =0\qquad\forall\, M>0.
\ees
In addition, $v$ cannot be a positive periodic solution.
\end{lem}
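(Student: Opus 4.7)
The proof combines parabolic compactness with the intersection-number machinery of Section~2 to identify the $\omega$-limit set as a single stationary solution, plus the critical-point count from Lemma~\ref{lem:u decreases far} to exclude positive periodic solutions.

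First, the uniform bound $u \leqslant \max\{1,\|u_0\|_\infty\}$, provided by comparison with constant supersolutions (permitted since $f(s) \leqslant 0$ for $s \geqslant 1$ and $b \geqslant 0$), together with interior parabolic Schauder estimates, makes $\{u(\cdot, t+\cdot)\}_{t\geqslant 1}$ relatively compact in $C^{2,1}_{\rm loc}([0,\infty)\times \R)$. Extracting along any sequence $t_n\to\infty$ yields a bounded entire solution $U(x,\tau)$ of the PDE satisfying the Robin boundary condition, with $u(\cdot, t_n+\cdot) \to U$ in $C^{2,1}_{\rm loc}$. The $\omega$-limit set $\omega(u):=\{U(\cdot,0)\}$ is thus nonempty, compact and connected in $C^2_{\rm loc}([0,\infty))$.

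Second, I would show that every such entire orbit $U$ is independent of $\tau$, following the Matano-style argument used in \cite{DM,DuLou}. For each fixed $s>0$, apply Lemma~\ref{lem:zero} to $w(x,t):=u(x,t+s)-u(x,t)$, which solves a linear parabolic equation with bounded coefficients, satisfies the homogeneous Robin condition at $x=0$, and decays Gaussianly as $x\to\infty$ by Lemma~\ref{lm:u to 0}. Hence $\mathcal{Z}(w(\cdot,t))$ is finite and nonincreasing, stabilizing at some $k_s\geqslant 0$ with all zeros simple past some time. Passing to the $C^2_{\rm loc}$ limit along $t_n$, the function $U(\cdot,s)-U(\cdot,0)$ has only simple zeros, and the same holds along the whole $\tau$-orbit inside $\omega(u)$, which is itself invariant. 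Combined with Lemma~\ref{lem:intersection}, which forces the intersection count of $u(\cdot,t)$ against any stationary comparison function to be eventually constant, the dynamics on the invariant set $\omega(u)$ rules out nontrivial drift, i.e.\ $U_\tau \equiv 0$. Consequently, every $v\in \omega(u)$ solves \eqref{stationary}.

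Third, to exclude positive periodic solutions and to conclude convergence, I use Lemma~\ref{lem:u decreases far}(3): for $t$ large, $u(\cdot,t)$ has a fixed number $N$ of non-degenerate local maxima, a count preserved (as an upper bound) in the $C^2_{\rm loc}$ limit. Positive periodic solutions, which possess infinitely many local maxima, therefore cannot belong to $\omega(u)$. The remaining candidates from Lemma~\ref{limitsol} (the trivial solution, shifted active states $v_*(\cdot-z)$, and shifted ground states $V(\cdot-z)$) are indexed by the sets $\Shift_{active}(b)$ and $\Shift_{ground}(b)$, each a discrete zero set of a real-analytic function of the shift parameter, and hence form an isolated subset of $C^2_{\rm loc}([0,\infty))$. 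Connectedness of $\omega(u)$ then forces it to be a singleton, proving \eqref{omegalimit}. The main obstacle is Step~2: on the unbounded half-line, the natural gradient-flow energy $\int_0^\infty [\tfrac{1}{2}u_x^2+\tfrac{1}{2}F(u)]dx+\tfrac{1}{2b}u(0,t)^2$ may diverge (for instance in the spreading case, where $u\to 1$ locally and $F(1)<0$), preventing a direct Lyapunov argument; the workaround is precisely the zero-number machinery of Lemmas~\ref{lem:zero}--\ref{lem:intersection} combined with the Gaussian decay of Lemma~\ref{lm:u to 0}.
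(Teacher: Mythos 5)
Your overall architecture matches the paper's: the paper's proof is itself only a few lines, deferring the hard work to Lemmas~\ref{lem:intersection} and~\ref{lem:u decreases far} and to the convergence machinery of Du--Matano and Du--Lou, and it excludes positive periodic solutions exactly as you do, via the finiteness of $\mathcal{Z}(u_x(\cdot,t))$ from Lemma~\ref{lem:u decreases far}. Your identification of the correct compactness setup, the role of the Gaussian decay from Lemma~\ref{lm:u to 0}, and the reason the energy method is unavailable here are all sound.

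However, the step where you go beyond the paper's citation --- deducing that $\omega(u)$ is a singleton from ``discreteness of the candidate set plus connectedness'' --- would fail. First, $f$ is only assumed $C^1$, so there is no real-analyticity to invoke: $\Shift_{active}(b)$ and $\Shift_{ground}(b)$ are zero sets of merely $C^1$ functions of the shift and can contain whole intervals (for instance, if $f(s)=-s/b^2$ on a subinterval of $(0,\alpha)$ then $b\sqrt{F(s)}\equiv s$ there and $\Shift_{ground}(b)$ contains a continuum). Second, even when $\Shift_{ground}(b)$ is discrete it may be unbounded (cf.\ \eqref{ground_set} and Remark~\ref{zground}), in which case the ground states $V(\cdot-z_n)$ with $z_n\to\infty$ accumulate at the trivial solution in $C^2_{\rm loc}([0,\infty))$, so the candidate set is not isolated. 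Hence connectedness alone cannot pin down a single limit; the actual mechanism (as in \cite{DM,DuLou}) is that Lemma~\ref{lem:intersection} makes the intersection number of $u(\cdot,t)$ with \emph{each} member of the continuum of ODE trajectories eventually constant, and this rigidity is what prevents $\omega(u)$ from containing an arc of stationary solutions. Relatedly, your Step~2 conclusion that the stabilization of $\mathcal{Z}(u(\cdot,t+s)-u(\cdot,t))$ ``rules out nontrivial drift, i.e.\ $U_\tau\equiv 0$'' is asserted rather than derived; that implication is essentially the content of \cite[Theorem~1.1]{DM} and is the genuinely hard part of the lemma, so it needs either a real argument or an explicit appeal to that result.
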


\begin{proof}
Denote by $\omega(u)$ the $\omega$-limit set of $u(\cdot,t)$ in the topology of $L^\infty_{\rm loc}
([0,\infty))$. By local parabolic estimates, the definition of $\omega(u)$ remains unchanged if
the topology of $L^\infty_{\rm loc}([0,\infty))$ is replaced by that of
$C^2_{\rm loc}([0,\infty))$.
Thanks to our Lemmas~\ref{lem:intersection} and~\ref{lem:u decreases far}, and following the ideas of Du and Matano~\cite{DM} and Du and Lou~\cite{DuLou}, one can prove that
$\omega(u)$ consists exactly of one of the solutions of \eqref{stationary}.
Moreover, according to Lemma~\ref{lem:u decreases far}, we also know that $\mathcal{Z}(u_x(\cdot,t))$ is a finite and nonincreasing function of $t$, so that $v$ cannot be a positive periodic function.
\end{proof}

\subsection{Sufficient Conditions for Spreading}

We say that, given an initial datum $u_0$, {\bf spreading} happens if $v$ in \eqref{omegalimit}
is an active state $v_* (\cdot -z)$ with $z \in \R$.
Here we give some sufficient conditions for spreading.

\begin{lem}\label{spreading} Assume {\bf(F)} and let $u$ be the solution of \eqref{p} with $u_0 \in \mscX(h)$.
Then spreading happens 
if $u_0$ satisfies  one of the following conditions:
\begin{enumerate}
\item[{\rm(1)}] for some $m\in(\theta,1)$ and $r \geqslant 0$, $u_0(\cdot)\geqslant v_m(\cdot-r)$ on $[0,\infty)$ where $v_m$ is given by~\eqref{finite} and~\eqref{extended_vm};
\item[{\rm(2)}] for some $m \in (\alpha , 1]$ and  $r\geqslant 0$,   $u_0 (\cdot) \geqslant m $ on
$[r, r +  2L(m) ]$, where $L(m)$ is a certain positive  function of $m\in(\alpha,1]$.
\end{enumerate}
\end{lem}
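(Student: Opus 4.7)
The plan is to handle the two parts separately, with part~(2) naturally splitting into two subcases according to whether $m > \theta$ or $m \in (\alpha, \theta]$.

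For part~(1), the idea is to exploit $v_m(\cdot - r)$, extended by zero outside $[r, r+2L_m]$ as in~\eqref{extended_vm}, as a generalized stationary subsolution of~\eqref{p}. On its support it exactly solves the elliptic equation; outside it vanishes; and at the boundary of its support the zero-extension creates an inward corner, producing the distributional inequality $v_m'' + f(v_m) \geqslant 0$. The Robin boundary condition holds in the subsolution sense, since $v_m(\cdot - r)$ is nonnegative, equals zero at $x = 0$ (either in the interior or at the left endpoint of the support), and has nonnegative right-derivative there, which combined with $b \geqslant 0$ gives $v_m(-r) - b v_m'(-r_+) \leqslant 0$. The comparison principle then yields $u(\cdot, t) \geqslant v_m(\cdot - r)$ for all $t \geqslant 0$. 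Applying Lemma~\ref{convergence}, $u(\cdot, t) \to v$ in $C^2_{\mathrm{loc}}([0, \infty))$ for some non-periodic solution $v$ of~\eqref{stationary} with $v \geqslant v_m(\cdot - r)$, so in particular $\|v\|_\infty \geqslant m > \theta$. This rules out $v \equiv 0$ and every shifted ground state $V(\cdot - z)$ (whose $L^\infty$ norm is $V(0) = \theta$), leaving only an active state $v_*(\cdot - z)$ from Lemma~\ref{limitsol}, which is exactly spreading.

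For part~(2) in the easy regime $m \in (\theta, 1]$, I would set $m' := \tfrac{m+\theta}{2} \in (\theta, m)$ and $L(m) := L_{m'}$. Then $v_{m'}(x - r) \leqslant m' < m \leqslant u_0(x)$ on $[r, r + 2L_{m'}]$, and $v_{m'}(\cdot - r) \equiv 0 \leqslant u_0$ elsewhere, so $u_0 \geqslant v_{m'}(\cdot - r)$ and part~(1) immediately applies.

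The main obstacle will be the subcase $m \in (\alpha, \theta]$: here the plateau amplitude $m$ alone is too small to fit any $v_{m'}$ with $m' > \theta$ beneath $u_0$, so one must first use the parabolic flow to propagate the solution upward on a bounded sub-interval before reducing to~(1). Fix $m' \in (\theta, 1)$ and choose $L(m)$ large enough so that the Dirichlet problem
\[
\hat u_t = \hat u_{xx} + f(\hat u) \text{ on } (r, r+2L(m)), \ \hat u(r,\cdot) = \hat u(r+2L(m),\cdot) = 0, \ \hat u(\cdot, 0) = m\mathbf{1}_{[r, r+2L(m)]},
\]
converges as $t \to \infty$ to the maximal positive stationary state on $[r, r+2L(m)]$, whose supremum exceeds $\theta$. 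Standard bistable dynamics on a bounded interval guarantee such an $L(m)$ exists: for $L$ large the stationary problem has two ordered positive solutions, and the condition $m > \alpha$ places the constant initial datum, in an energetic sense, above the unstable lower one (whose amplitude tends to $0$ as $L \to \infty$), so $\hat u$ converges to the stable upper one. At some finite time $T$, $\hat u(\cdot, T) \geqslant v_{m'}(\cdot - r')$ for an appropriate shift $r'$. Since the zero-extension of $\hat u$ is a subsolution of~\eqref{p} (by the same boundary/corner argument as in part~(1)), $u(\cdot, T) \geqslant v_{m'}(\cdot - r')$, and part~(1) applied from time $T$ finishes the proof. The delicate step, which I expect to require the most care, is the rigorous justification of the bounded-interval spreading for $\hat u$; this is classical but non-trivial, and can be carried out via an energy/variational argument, or by a careful phase-plane comparison using the family $\{v_m\}_{m \in (\theta, 1)}$ of stationary profiles.
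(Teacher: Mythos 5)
Parts (1) and the subcase $m\in(\theta,1]$ of (2) are correct and essentially identical to the paper's argument: compare with the generalized stationary subsolution $v_m(\cdot-r)$, invoke Lemma~\ref{convergence}, and note that only active states lie above $v_m(\cdot-r)$. (The paper takes $L(m)=L_m$ and compares $u_0\geqslant m$ directly with $v_m(\cdot-r)$; your choice $m'=\frac{m+\theta}{2}$, $L(m)=L_{m'}$ is equally valid and even avoids the degenerate endpoint $m=1$, where $L_1=\infty$.)

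The genuine gap is in the subcase $m\in(\alpha,\theta]$, which is the heart of the lemma. The convergence you assert for the auxiliary Dirichlet problem $\hat u$ is true, but neither mechanism you offer proves it. First, a factual slip: the positive Dirichlet stationary states on an interval of length $2L$ are exactly the $v_{m'}$ with $L_{m'}=L$ and $m'\in(\theta,1)$ (the trajectory through $(m',0)$ reaches $v=0$ only if $F(m')\leqslant 0$), so the amplitude of the unstable lower one tends to $\theta$, not to $0$, as $L\to\infty$; in particular the constant $m\leqslant\theta$ is never pointwise above it. Second, and more seriously, the energetic comparison goes the wrong way: for $m\in(\alpha,\theta)$ one has $F(m)>0$, so the energy of the (truncated) constant datum is about $2LF(m)\to+\infty$, while the unstable stationary state has energy $O(1)$ (about $4\int_0^\theta\sqrt{F}$); the gradient-flow structure therefore gives no obstruction whatsoever to $\hat u\to 0$. (The energy argument works precisely when $F(m)<0$, i.e.\ $m>\theta$, where it is not needed.) A static phase-plane comparison fails for the same reason: no $v_{m'}$ with $m'>\theta$ fits under the constant $m\leqslant\theta$ at $t=0$. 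The missing ingredient --- and what the paper actually does, following Fife and McLeod --- is a \emph{finite-time} comparison with the spatially homogeneous ODE $\eta_t=f(\eta)$, $\eta(0)=m$: since $f>0$ on $(\alpha,1)$, $\eta$ exceeds $\theta+2\vep$ at some finite time $T$, and a weighted maximum principle (with weight $\rho(x)=(1+x^2)^{-1}$) shows that if $L=L(m)$ is chosen large enough in terms of $T$, then $|\hat u(x,T)-\eta(T)|\leqslant\vep$ on a centered subinterval of length $2L_{\theta+\vep}$. This pushes the solution above $\theta$ on a long enough interval at time $T$ and reduces the problem to the already-settled case. Without this (or an equivalent quantitative propagation estimate), your reduction to part (1) is not justified.
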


\begin{proof}
(1).
When $u_0(\cdot)\geqslant v_m(\cdot-r)$,  the  comparison principle implies that
$
u(\cdot,t)\geqslant v_m (\cdot-r)$ on  $[0,\infty)$ for each $t>0$.
Since active states are the only solutions of \eqref{stationary} that lie above $v_m(\cdot-r)$, the conclusion follows from Lemma~\ref{convergence}
immediately.

(2a).  First we consider the case $m \in (\theta,1]$. Choose $L(m)=L_m$ defined  in \eqnref{Lm}. If $u_0\geqslant m$ on $[r,r+2L(m)]$, then
  $u_0(\cdot) \geqslant   v_m(\cdot-r)$  so by (1),  spreading happens.

(2b). Next consider $m \in (\alpha , \theta]$. Let  $\eta(t)$  be the solution of
$$
\eta_t = f(\eta)\quad\hbox{on \ }[0,\infty),\qquad \eta(0)=m.
$$
Since $f(\cdot)>0$ in $(\alpha,1)$, with $\vep=\frac{1-\theta}{3}$ and
 $T=\int_m^{\theta+2\vep}\frac{ds}{f(s)}$ we have $\eta(T) =\theta+2\vep$.

We fix $R=L_{\theta+\vep}$.
Let $L\gg R$ be a constant to be determined and $w_0$ be a  function satisfying
\bess w_0(x)=m\hbox{\ when \ }|x|<L-1, \quad w_0(x)=0\hbox{\ when \ }|x|>L,
 \quad x w_0'(x)\leqslant 0 \ \forall\,x. \eess
Let $w(x,t)$ be the solution of  the problem
$$
 \left\{
  \begin{array}{ll}
  w_t = w_{xx} +f(w) &\forall\, x\in [-L,L], t>0,\\
  w(\pm L,t)=0 &\forall\, t>0,\\
  w(x,0)=w_0(x) &\forall\, x\in [-L,L].
  \end{array}
  \right.
$$
Let $\rho(x) =( 1+ x^2)^{-1}$. Then  $\zeta(x,t):=\rho(x)[w(x,t)-\eta(t)]$
satisfies $\zeta_t=\zeta_{xx}+4x\rho \zeta_x+[2\rho+f']\zeta$. Hence, with $Q :=2+\max_{0\leqslant s\leqslant 1} f'(s)$  we can derive that
$$
\max\limits_{|x|\leqslant L} \{\rho(x) |w(x, t)-\eta(t)|\} \leqslant e^{Qt} \cdot
\max\limits_{|x|\leqslant L} \{\rho(x) |w_0(x) -m|\}\leqslant \frac{e^{Qt} }{1+(L-1)^2}.
$$
 Taking $L=L(m)=1+\sqrt{ (1+R^2)e^{QT}/\vep-1}$ we have,  when $|x|\leqslant R$,
$$
 |w(x, T)-\eta(T)| \leqslant \frac{1}{\rho(x)} \frac{ e^{QT}}{1+(L-1)^2}
\leqslant\frac{(1+R^2)e^{QT}}{1+(L-1)^2} \leqslant \vep.
$$
Thus, $w(\cdot,T)\geqslant \eta(T)-\vep=\theta+\vep$ on $[-R,R]$.

Now if $u_0\geqslant m$ on some interval $[r, r+2L]$ for
$r\geqslant 0$, we have $u_0(x+r+L)\geqslant w_0(x)$ for all $x\in [-L,L]$,  so by comparison,  for $x\in[-R,R]$,
$
u(x+r+L,T)\geqslant w(x,T)>\theta+\vep.
$
 The assertion of the Lemma then   follows from the earlier case (2a) with $m=\theta+\vep$.
\end{proof}

We remark that the sufficient  condition (2)  originates  from Fife and McLeod \cite{FM}. The introduction of the function $\rho$ is due to Feireisl and Pol\`{a}\v{c}ik \cite{FPolacik}.

\subsection{The Threshold Phenomenon}

\begin{thm}\label{trichotomy}
Suppose that $f$ satisfies {\bf (F)} and $\phi\in \mathscr{X}(h)$ with $h>0$. Let  $u$
be the solution of~\eqref{p} with $u_0=\sigma \phi$.
Then there exist $0< \sigma_* \leqslant \sigma^* < \infty$ such that:
\begin{itemize}
\item[\rm (i)] if $\sigma >\sigma^*$, {\bf spreading} happens,  i.e., $\lim_{t\to\infty} \|u(\cdot,t)-v_*\|_{C^2([0,M])}=0\ \forall\,M>0$;

\item[\rm (ii)] if $\sigma\in[0,\sigma_*)$, {\bf vanishing} happens, i.e.,
$\lim_{t\to\infty} \|u(\cdot,t)\|_{C^2([0,\infty))}=0$

\item[\rm (iii)] if $\sigma \in [\sigma_*,\sigma^*]$, neither spreading nor vanishing happen.

\end{itemize}
\end{thm}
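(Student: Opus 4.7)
The plan is to set
\begin{equation*}
\Sigma_s := \{\sigma > 0 \mid \text{spreading happens for } u_0=\sigma\phi\}, \qquad
\Sigma_v := \{\sigma \geqslant 0 \mid \text{vanishing happens for } u_0=\sigma\phi\},
\end{equation*}
and then define $\sigma^* := \inf \Sigma_s$ and $\sigma_* := \sup \Sigma_v$. The bulk of the argument is to show these sets are nonempty, open, and monotone, so that $\Sigma_v = [0,\sigma_*)$ and $\Sigma_s = (\sigma^*,\infty)$ with $0 < \sigma_* \leqslant \sigma^* < \infty$. The middle interval $[\sigma_*,\sigma^*]$ then yields neither spreading nor vanishing by definition.

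First I would establish monotonicity. By the strong comparison principle, $\sigma \mapsto u(\cdot,t;\sigma\phi)$ is strictly increasing in $\sigma$. Hence $\Sigma_v$ is a left-open/closed sub-interval of $[0,\infty)$ containing $0$, and $\Sigma_s$ is a right-infinite interval: if $\sigma_0\in\Sigma_s$, any $\sigma>\sigma_0$ has $u(\cdot,t;\sigma\phi)\geqslant u(\cdot,t;\sigma_0\phi)\to v_*$ locally uniformly, and the limit is an ordered stationary solution lying above $v_*$, hence (by Lemma~\ref{convergence} and the classification in Lemma~\ref{limitsol}) is itself $v_*$ up to a nonpositive shift, i.e.\ spreading. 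The analogous argument shows $\Sigma_v$ is a down-set.

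Next, I would verify nonemptiness. For $\Sigma_s \neq \emptyset$: pick $m\in(\alpha,1)$ and the corresponding $L(m)$ from Lemma~\ref{spreading}(2). Since $\phi\not\equiv 0$ is continuous, there is $[a,b]\subset(0,h)$ on which $\phi\geqslant\varepsilon>0$, hence $\sigma\phi\geqslant \sigma\varepsilon\,\mathbf 1_{[a,b]}$. Comparing with the linear problem $\bar u_t=\bar u_{xx}-K\bar u$, $K:=\sup|f'|$ (valid since $u$ stays in $[0,\max\{1,\|u_0\|_\infty\}]$), the heat-kernel lower bound shows that at $t_0=1$, $u(\cdot,1;\sigma\phi)\geqslant m$ throughout an interval of length $2L(m)$ for $\sigma$ sufficiently large, and Lemma~\ref{spreading}(2) then gives spreading. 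For $\Sigma_v \neq \emptyset$ (and $\sigma_*>0$): since $f'(0)=-\lambda^2$, pick $\delta\in(0,\alpha)$ so that $f(s)\leqslant -\tfrac{\lambda^2}{2} s$ for $s\in[0,\delta]$. Let $\bar u$ solve the linear problem $\bar u_t=\bar u_{xx}-\tfrac{\lambda^2}{2}\bar u$ with Robin boundary condition and initial datum $\sigma\|\phi\|_\infty\mathbf 1_{[0,h]}$; by standard heat-kernel estimates $\|\bar u(\cdot,t)\|_\infty\leqslant C\sigma\,(1+t)^{-1/2}e^{-\lambda^2 t/2}$, which is below $\delta$ for all $t\geqslant 0$ provided $\sigma$ is small. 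Then $\bar u$ is a supersolution of \eqref{p} as long as $u\leqslant\delta$, so by a continuity/comparison argument $u\leqslant\bar u$ for all $t\geqslant 0$ and vanishing in $L^\infty$ follows; the $C^2$ decay comes from parabolic Schauder estimates.

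Finally, I would prove openness of both $\Sigma_s$ and $\Sigma_v$. If $\sigma_0\in\Sigma_s$ then $u(\cdot,t;\sigma_0\phi)\to v_*(\cdot-z)$ in $C^2_{\rm loc}$, so for some large $T$ we have $u(\cdot,T;\sigma_0\phi)\geqslant v_m(\cdot-r)$ with $m\in(\theta,1)$; by continuous dependence on initial data on $[0,T]$ the same strict inequality holds for $\sigma$ near $\sigma_0$, and Lemma~\ref{spreading}(1) yields spreading. Similarly if $\sigma_0\in\Sigma_v$, then $\|u(\cdot,T;\sigma_0\phi)\|_\infty$ is eventually smaller than $\delta/2$, hence by continuous dependence this persists for nearby $\sigma$; translating time to $T$ and applying the supersolution argument above finishes the proof of vanishing for $\sigma$ near $\sigma_0$. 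Combining these four steps gives $\Sigma_v=[0,\sigma_*)$, $\Sigma_s=(\sigma^*,\infty)$, $0<\sigma_*\leqslant\sigma^*<\infty$, and the intermediate regime falls into neither case as stated.

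The main obstacle will be the openness of $\Sigma_v$ together with the quantitative control used to make the linear supersolution effective: one must convert local qualitative decay at $\sigma_0$ into a uniform-in-$\sigma$ smallness estimate on a time slice, and then produce a global-in-time linear supersolution compatible with both the Robin boundary condition and the requirement that the solution remain in the regime where $f(u)\leqslant -\tfrac{\lambda^2}{2}u$; all the rest is comparison and continuous dependence, but the bookkeeping for this step is delicate because $b$ can be any nonnegative number.
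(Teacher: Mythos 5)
Your proposal follows essentially the same route as the paper: define the vanishing and spreading sets, show each is a nonempty, monotone, open sub-interval of $[0,\infty)$ (via comparison, continuous dependence on a finite time slice, and the two sufficient conditions for spreading), and let $\sigma_*$, $\sigma^*$ be the endpoints. The only real divergence is in the vanishing step, where you build a linear Robin heat supersolution $\bar u_t=\bar u_{xx}-\tfrac{\lambda^2}{2}\bar u$ with compactly supported data; the paper instead observes that once $\|u(\cdot,T)\|_{L^\infty}<\alpha$, the spatially constant ODE solution $\eta'=f(\eta)$, $\eta(T)=\|u(\cdot,T)\|_{L^\infty}$, is a Robin supersolution (its $x$-derivative vanishes) and decays to $0$ because $f<0$ on $(0,\alpha)$. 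This simpler device also repairs the one imprecision in your plan: when you ``translate time to $T$'' to prove openness of $\Sigma_v$, the profile $u(\cdot,T;\sigma\phi)$ is strictly positive on all of $[0,\infty)$, so it is not dominated by your compactly supported initial barrier $\sigma\|\phi\|_\infty\mathbf{1}_{[0,h]}$; replacing that barrier by a spatial constant (equivalently, using the ODE supersolution) makes the restart immediate and removes the ``delicate bookkeeping'' you anticipate. With that one-line adjustment your argument is complete and matches the paper's.
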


\begin{proof} We denote the solution of \eqref{p} by $u(x,t; u_0)$ and  define
\bes\nonumber \cA  &:=& \Big\{ \sigma\geqslant 0 \;|\;  \lim_{t\to\infty} \| u(\cdot,t;\sigma\phi)\|_{L^\infty([0,\infty))}=0\Big\},
\\ \nonumber \cB &:=& \Big\{   \sigma\geqslant 0 \;|\;  \lim_{t\to\infty} \| u(\cdot,t;\sigma\phi)-v_*\|_{L^\infty([0,M])}=0\ \forall\, M>0\Big\},
\\ \label{sigma} && \sigma_* :=\sup\{\sigma\;|\; \sigma\in\cA\},\quad\sigma^*:=\inf\{\sigma\;|\; \sigma\in\cB\}.
\ees
We remark that,
by the local parabolic estimates,
the convergence in  the definitions of $\cA$  and $\cB$ can be replaced by the $C^2([0,\infty))$ (maximum of $|u|+|u_x|+|u_{xx}|$ over $[0,\infty)$)  and $C^2_{\mathrm{loc}}([0,\infty))$ topology  respectively.

By comparison principle, if $\sigma\in \cA$, then $[0,\sigma]\in \cA$.
Also, if $\sigma\in \cA$, then there exists $T>0$  such that $\|u(\cdot,T;\sigma\phi)\|_{ L^\infty }<\alpha$. Hence, by continuity, there exists $\vep>0$ such that $\|u(\cdot,T;\hat\sigma\phi)\|_{ L^\infty }<\alpha$  for every $\hat\sigma\in[\sigma,\sigma+\vep]$. As $f<0$ in $(0,\alpha)$ we derive by comparison that  $\|u(\cdot,t;\hat\sigma\phi)\|_{ L^\infty } \to 0$ as $t\to\infty$,  so $\hat\sigma\in\cA$.    Hence, $\cA$ is open. As $0\in\cA$, we see that
$\cA=[0,\sigma_*). $

\medskip

 Similarly, if $\sigma\in \cB$, then by comparison and Lemma \ref{convergence}, $[\sigma,\infty)\in \cB$.
  In addition, since $v_*(\infty)=1$, for $L=L(\theta)$ given in Lemma \ref{spreading} (2),
  there exists $r>0$ such that $v_*>\theta$ in $[r,r+2L]$.
  As $u(\cdot,t;\sigma\phi)\to v_*$ locally uniformly, there exists $T>0$ such that
  $u(\cdot,T;\sigma\phi)> \theta$ in $[r,r+2L]$. Then by continuous dependence,
   there exists $\vep>0$ such that
  $u(\cdot,T;\hat\sigma\phi)>\theta$ in $[r,r+2L]$
  for every $\hat\sigma\in[\sigma-\vep,\sigma]$. Then by Lemma
  \ref{spreading} (2), $\hat\sigma\in \cB$. Thus, $\cB$ is open.

  Next, we show that $\cB$ is non-empty. For this, we need the technical assumption  $K:=-\inf_{s>1} f'(s)<\infty$ (without this condition, $\cB$ may be empty, c.f. \cite{DuLou, FF, Zla}).
  Comparing $u$  with  the solution of $w_t=w_{xx}-K w$ subject to the boundary condition $w(0,t)=0$ and initial datum $w(x,0)=\sigma\phi(x)$,  we find that
  \bess u(x,t; \sigma\phi) & \geqslant & w(x,t)=\sigma\int_0^h
   \frac{e^{-\frac{(x-y)^2}{4t}-K t}}{\sqrt{4\pi t}}
  \Big(1-e^{-\frac{xy}{t}}\Big)  \phi(y)\,dy. \eess
  When $\sigma$ is large enough and since $\phi$ is positive on a nontrivial interval, $u(\cdot,1;\sigma\phi)>\theta$ in $[1,1+2L(\theta)]$ so   by Lemma \ref{spreading} (2),  $\sigma\in \cB$. Hence, $\cB=(\sigma^*,\infty)$ where $\sigma^*\in[\sigma_*,\infty)$.\end{proof}

In order to prove Theorem~\ref{thm:main}, it remains to show that  $\sigma_*=\sigma^*$ and to investigate what happens in the transition case. This will
 be done  in the next section.
\bigskip


\section{The Transition Case (1)}\label{sec:conv H2}

In this section we first establish the convergence to a shifted ground state in $H^2([0,\infty))$ topology for the solution
$u(\cdot,t;\sigma \phi)$ of \eqref{p} with  $\sigma
\in [\sigma_*, \sigma^*]$. Then we complete the proof of Theorem~\ref{thm:main}, and in particular show that the threshold is sharp, that is $\sigma_* = \sigma^*$. Throughout this section, $u(x,t)=u(x,t;\sigma\phi)$ where $\sigma\in[\sigma_*,\sigma^*]$
and $\phi\in \mscX(h)$ for some $h>0$.
By Lemma~\ref{convergence}, the function $v(x)=\lim_{t\to\infty} u(x,t;\sigma\phi)$ exists and is a  solution of \eqref{stationary} (not positive periodic).  Since $\sigma\not\in \cA\cup\cB$, by Lemma \ref{limitsol},  there are only the following alternatives:
\begin{enumerate}\item[(i)]
 $v\equiv 0$; in this case
\bes\label{trans-1} \lim_{t\to\infty} \| u(\cdot,t;\sigma\phi)\|_{C^2([0,M])}=0\ \forall\, M>0,
\qquad \| u(\cdot,t;\sigma\phi)\|_{L^\infty ([0,\infty))} > \alpha\,\quad \forall\,t>0. \ees
\item[(ii)] $v=V(\cdot-z)$  with $z\in \Shift_{ground} (b)$ is a  ground  state (c.f. Lemma \ref{limitsol} (3)); in this case
\bes \label{trans-2}
 \lim_{t\to\infty} \| u(\cdot,t;\sigma\phi)-V(\cdot-z)\|_{C^2([0,M])}=0\quad\forall\, M>0.\ees
\end{enumerate}

\medskip

\subsection{Energy Estimates} In the transition case, convergence to a ground state may occur in a moving frame whose speed is not a priori known, so that we can no longer use standard parabolic estimates. To overcome this difficulty, we will use here an energy method.

For any $\psi\in H^1([0,\infty))$  we define
its energy by
\bes\label{E}
\bE[\psi]= \left\{\begin{array}{ll}
  \int_0^\infty [\psi_x^2(x)+ F(\psi(x))]dx+ \frac{1}{b}\psi^2(0)\quad&\hbox{if \ }b>0,\medskip
\\ \int_0^\infty [\psi_x^2(x)+ F(\psi(x))]dx\quad&\hbox{if \ }b=0.
 \end{array} \right.
\ees
Let $u$ be a solution of  \eqref{p}. Then $u(\cdot,t)\in H^2([0,\infty)),\ u_t(\cdot,t)\in
L^2([0,\infty))$ by Lemma \ref{lm:u to 0}. Using integration by parts and the definition
$F(u)=-2\int_0^u f(s)ds$ we have
\bess &&
\frac{d}{dt} \bE[ u(\cdot,t)] =-2\int_0^\infty u_t^2(x,t)dx\leqslant 0\quad\forall\, t>0
.\eess
In other words, \eqref{p} is a {\bf gradient flow}.

\begin{lem}\label{lem:E} There exists a constant $C$ such that
\bess
&& \int_1^\infty\int_0^\infty u_t^2(x,\tau)dxd\tau+\sup_{t\geqslant 1}\Big\{\Big| \bE[u(\cdot,t)]\Big|+ \|u(\cdot, t)\|_{H^2([0,\infty))}+\|f(u(\cdot,t))\|_{L^2([0,\infty))}\Big\}\leqslant C,
\\ && \\  &&\lim_{t\to\infty} \| u_t(\cdot,t)\|_{L^2([0,\infty))} = \lim_{t\to\infty} \| u_{xx}(\cdot,t)+f(u(\cdot,t))\|_{L^2([0,\infty))}=0.\eess
\end{lem}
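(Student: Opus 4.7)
The proof plan is to exploit the gradient flow structure $\frac{d}{dt}\bE[u(\cdot,t)]=-2\int_0^\infty u_t^2\,dx$ derived just above. Integrating from $1$ to $t$ gives
\[
2\int_1^t\int_0^\infty u_\tau^2\,dxd\tau \;=\; \bE[u(\cdot,1)]-\bE[u(\cdot,t)],
\]
so the time-space integrability of $u_t$ will follow as soon as $\bE[u(\cdot,t)]$ is bounded both above and below uniformly in $t\geqslant 1$. The upper bound is immediate: $\bE[u(\cdot,1)]$ is finite because the exponential pointwise decay in Lemma~\ref{lm:u to 0} makes $u_x(\cdot,1)^2$ and $F(u(\cdot,1))$ integrable at infinity, while they are bounded on compact sets by parabolic regularity.

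The crucial lower bound on $\bE$ reduces to bounding $\int F(u)\,dx$ from below. Since $F\geqslant 0$ on $[0,\theta]$ and $F\geqslant F_{\min}:=\min_{[0,M_0]}F$ globally, where $M_0:=\max\{1,\|u_0\|_\infty\}$, it suffices to bound $|\{x:u(x,t)>\theta\}|$ uniformly in $t$. Here the transition hypothesis $\sigma\in[\sigma_*,\sigma^*]$ enters: by Lemma~\ref{lem:u decreases far}(3) the set $\{u(\cdot,t)>\theta\}$ has at most $N$ connected components for $t\geqslant T$, and by Lemma~\ref{spreading}(2) applied with $m=\theta$, no such component may have length $\geqslant 2L(\theta)$, since otherwise spreading would occur and contradict $\sigma\in[\sigma_*,\sigma^*]$. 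Hence $|\{u(\cdot,t)>\theta\}|\leqslant 2NL(\theta)$ for $t\geqslant T$, and a $T$-dependent bound on the remaining window $[1,T]$ comes from Lemma~\ref{lm:u to 0}.

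With $\bE$ controlled the remaining estimates in claim~(i) cascade. The bound on $\|u_x\|_{L^2}$ comes directly from the formula for $\bE$. For $\|u\|_{L^2}$, I would pick $m\in(\alpha,\theta)$ and $\eta\in(0,m)$ small enough that $F(u)\geqslant\tfrac{\lambda^2}{2}u^2$ on $[0,\eta]$ (available via Taylor's theorem since $F(0)=F'(0)=0$, $F''(0)=2\lambda^2$). Applying Lemma~\ref{spreading}(2) once more, this time with $m$ in place of $\theta$, bounds $|\{u>m\}|$ uniformly; and the strict positivity of $F$ on the compact subinterval $[\eta,m]\subset(0,\theta)$ together with $\int F(u)\leqslant\bE[u(\cdot,1)]$ yields uniform bounds on both $|\{u\in[\eta,m]\}|$ and $\int_{u\leqslant\eta}u^2$. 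Combining these pieces gives $\|u\|_{L^2}\leqslant C$, from which $\|f(u)\|_{L^2}\leqslant K_0\|u\|_{L^2}\leqslant C$ via $|f(s)|\leqslant K_0 s$ on $[0,M_0]$. The equation $u_{xx}=u_t-f(u)$ then reduces the $L^2$ bound on $u_{xx}$ to a uniform bound on $\|u_t(\cdot,t)\|_{L^2}$; setting $v:=u_t$, this function solves the linearized problem $v_t=v_{xx}+f'(u)v$ with the same Robin condition $v(0,t)=bv_x(0,t)$, so the boundary term in the standard energy estimate is nonpositive, giving $\frac{d}{dt}\int v^2\leqslant 2K_1\int v^2$ with $K_1:=\sup_{[0,M_0]}f'$. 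Integrating backwards over a unit interval yields $\int v^2(\cdot,t)\leqslant e^{2K_1}\int_{t-1}^t\int v^2\,d\tau$, uniformly bounded thanks to the already-proven $L^2_{t,x}$ integrability of $u_t$.

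Claim~(ii) then drops out of the same inequality: since $\int_1^\infty\int v^2<\infty$, the time averages $\int_{t-1}^t\int v^2\,d\tau$ tend to $0$ as $t\to\infty$, hence $\|u_t(\cdot,t)\|_{L^2}\to 0$, and then $\|u_{xx}+f(u)\|_{L^2}=\|u_t\|_{L^2}\to 0$ automatically. I expect the most delicate step to be the uniform lower bound on $\bE$, specifically the interplay between Lemma~\ref{lem:u decreases far} and Lemma~\ref{spreading}(2) that allows one to rule out large-measure components of $\{u>\theta\}$ precisely at the threshold value $m=\theta$, together with the careful handling of the transient window $[1,T]$ before the zero-number structure stabilizes.
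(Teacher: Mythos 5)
Your proposal is correct and follows essentially the same route as the paper: the gradient-flow identity plus a uniform lower bound on $\bE$ obtained by controlling the measure of superlevel sets through the zero-number lemma and Lemma~\ref{spreading}(2), followed by the linearized $L^2$ energy estimate for $v=u_t$ (with the nonpositive Robin boundary term) and the unit-interval averaging of $\int_1^\infty\|u_t\|_{L^2}^2\,dt<\infty$. The only difference is bookkeeping: the paper packages the coercivity as a single inequality $F(s)\geqslant\vep s^2$ on $[0,\gamma]$ with $\gamma\in(\alpha,\theta)$, which yields the $H^1$ bound in one stroke, whereas you split the range of $u$ into three pieces — both work.
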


\begin{proof} Note that $F>0$ in $(0,\theta)$, and that $F'(0)=0< F''(0)=-2 f'(0)$. We fix a $\gamma\in(\alpha,\theta)$. Then there exists $\vep\in(0,1)$ such that $F(s)\geqslant \vep s^2$ for $s\in[0,\gamma]$.
Consider the set
$$
J_\gamma (t):= \{x>0 : u(x,t)> \gamma\}.
$$
By Lemmas \ref{lem:zero} and \ref{lem:u decreases far}, there exists an integer $N_0$ such that
for each $t>1$,  $u(\cdot,t)$ has at most $N_0$ local maximum points.
It follows that $J_\gamma$ is the union of at most $N_0$ open intervals.
Denote by $|J_\gamma(t)|$ the length of $J_\gamma(t)$. Then since $\sigma\not\in\cB$, by Lemma \ref{spreading} (2),
we have $|J_\gamma(t)|\leqslant 2N_0 L(\gamma)$. Consequently,
$$
\int_0^{\infty} \Big(F(u)-\vep u^2\Big)dx\geqslant
 \int_{J_\gamma(t)} \Big(F(u)-\vep u^2\Big)dx   \geqslant 2\Big\{ F(1)-\vep \|u\|_{L^\infty}^2\Big\} N_0 L(\gamma)=:c_1.
$$
Since $\|u\|_{L^\infty} \leqslant\max\{1, \sigma^*\|\phi\|_{L^\infty}\}$, we see that   $c_1>-\infty$.
Hence, for $t>1$,
\begin{eqnarray*}
\bE[u(\cdot,1)] & = & 2\int_1^t\int_0^\infty u_t^2(x,\tau)dxd\tau +  \bE[u(\cdot,t)]
 \\ &\geqslant &2 \int_1^t\int_0^\infty u_t^2(x,\tau)dxd\tau + \vep \|u(\cdot,t)\|^2_{H^1([0,\infty))} +c_1 .
\end{eqnarray*}
This gives an upper bound $C$ of $\big| \bE[u(\cdot,t)] \big|$, $\|u(\cdot,t)\|_{H^1([0,\infty))}$ and $\int_1^\infty\int_0^\infty u_t^2 dxdt$.  Setting $K_0:=\max_{0\leqslant s\leqslant \|u\|_{L^\infty}} |f'(s)|$ and up to increasing $C$ we get $\|f(u(\cdot,t))\|_{L^2([0,\infty))} \leqslant K_0\|u(\cdot,t)\|_{L^2([0,\infty))} \leqslant~C$.

Next,
by \eqref{uest} and by $|f'(u)|\leqslant K_0$ we have
\begin{eqnarray*}
\frac{d}{dt} \|u_t(\cdot, t)\|^2_{L^2} & = & 2\int_0^\infty u_t[u_{xxt} +f'(u)u_t]\\
& \leqslant & -2b u_{xt}^2(0,t) - 2 \int_0^\infty u^2_{xt} dx + 2K_0 \int_0^\infty u^2_t dx\\
& \leqslant & 2K_0\|u_t(\cdot, t)\|^2_{L^2} .
\end{eqnarray*}
Hence
$\|u_t(\cdot, t)\|_{L^2}  \leqslant e^{K_0(t-s)} \|u_t(\cdot, s)\|_{L^2}$
for any  $t>s >1.$ As $\int_1^\infty \|u_t\|^2_{L^2}<\infty$, we obtain
$$\|u_t(\cdot, t)\|_{L^2} \to 0 \hbox{\ as \ } t\to \infty .$$
Finally,
from the equation $u_t =u_{xx} +f(u)$ we see that
$\|u_{xx}\|^2_{L^2} \leqslant 2 (\|u_t\|^2_{L^2} + \|f(u)\|^2_{L^2})<C$ (up to increasing $C$ again) and
$\|u_{xx}+f(u)\|_{L^2}\to 0$ as $t\to\infty$. This completes the proof of the lemma.
\end{proof}

\subsection{Convergence in Moving Coordinates}\label{subsec:moving coordinate}
By Lemma \ref{lem:u decreases far}, for any $t>T$, $u(\cdot,t)$ has exactly $N$ local maximum points
$\{\xi_i(t)\}_{i=1}^N$. In the transition case there are some $i$ such that
$u(\xi_i(t),t)>\alpha$ for all $t>T$. In what follows, denote by $\xi(t)$ the leftmost one of them:
\begin{equation}\label{def xi}
\xi(t):= \min \{ \xi_i(t) \mid u(\xi_i(t),t) >\alpha\}
\end{equation}
for all $t>T$.

Now suppose \eqref{trans-1} holds. We must have $\lim_{t\to\infty}\xi (t)=\infty$.
Indeed, if $\liminf_{t\to\infty} \xi(t)\leqslant M<\infty$, then $\limsup_{t\to\infty}
\|u(\cdot,t)\|_{L^\infty([0,2M])} \geqslant \alpha$, which contradicts the equality in \eqref{trans-1}.
 Now set
$$
y:= x-\xi(t) \quad \mbox{and} \quad w(y,t):= u(y+\xi(t),t).
$$
Lemma \ref{lem:E} implies that $w$ is bounded in $H^2([-\xi(t),\infty))$.  In addition,
 $\|w_{yy}+f(w)\|_{L^2([-\xi(t),\infty))} =\| u_t(\cdot,t)\|_{L^2([0,\infty))} \to 0$ as $t\to\infty$. Hence, there exists a sequence
$\{t_n\}_{n=1}^\infty$ in $[0,\infty)$ and a function $w_\infty\in H^2(\R)$ such that
\bess \lim_{n\to\infty} t_n=\infty,\quad\lim_{n\to\infty}   w(\cdot,t_n)= w_\infty(\cdot) \quad\hbox{\ in \ } H^2([-R,R])\quad\forall\, R>0. \eess
Furthermore,
$\|w_\infty\|_{H^2(\R)}\leqslant \sup_{t\geqslant 1}\|u(\cdot,t)\|_{H^2([0,\infty))}<\infty$ and $w_\infty$ satisfies
$$
   w''_\infty  + f( w_\infty) =0\hbox{ \ \ in \ }\R,\qquad w_\infty\in H^2(\R),\quad
     w_\infty(0)\geqslant \alpha.
$$
 Therefore $w_\infty\equiv V$.  Finally, by the uniqueness of $w_\infty(y)$ we have
\bes \label{trans1a}
\lim_{t\to\infty}\|w(\cdot,t)-V(\cdot)\|_{H^2([-M,M])}=0\quad\forall\, M>0.
\ees

Suppose \eqref{trans-2} holds. Then we have
\bes\label{trans2a} \lim_{t\to\infty} \Big(
\| u(\cdot,t)-V(\cdot-\xi(t))\|_{H^2([0,M])}+|\xi(t)-z| \Big)=0\quad \forall \, M>0.
\ees


\subsection{Uniform Convergence}
Note first that both \eqref{trans1a} and \eqref{trans2a} imply by classical embeddings that the convergence to the shifted ground state is also locally uniform in the same moving coordinates. The uniform convergence on the whole half-line relies on the following lemma on the number of maximum points of $u$:

\begin{lem}\label{lem:xiN-xi} Let $T$ be the time in Lemma \ref{lem:u decreases far}. Then
$u(\cdot,t)$ has exactly one maximum point $\xi (t)$ for any $t>T$.
\end{lem}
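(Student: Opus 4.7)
The plan is to argue by contradiction. Suppose $N \geq 2$; then Lemma \ref{lem:u decreases far} provides, for every $t > T$, a second local maximum curve $\xi'(t) \neq \xi(t)$ together with an adjacent local minimum curve $\tilde\xi(t)$, both of class $C^1$. I would combine the moving-frame analysis of subsection \ref{subsec:moving coordinate}, the energy estimates of Lemma \ref{lem:E}, and Sturm-type zero-number arguments to derive a contradiction.

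First I would show that $\xi'(t),\tilde\xi(t) \to +\infty$. In case \eqref{trans-2}, the $C^2_{\mathrm{loc}}([0,\infty))$-convergence $u(\cdot,t) \to V(\cdot - z)$ yields $u_x \to V'(\cdot - z)$ in $C^1_{\mathrm{loc}}$; since $V'(\cdot - z)$ has a unique simple zero at $z$ (as $V''(0) = -f(\theta) < 0$), the function $u_x(\cdot,t)$ has exactly one zero in $[0,M]$ for every $M > z$ and every large $t$, so any extra extremum must escape. In case \eqref{trans-1}, $\xi(t) \to \infty$ has already been established, and a parallel argument using $u \to 0$ in $C^2_{\mathrm{loc}}$ together with Lemma \ref{lem:zero}(2) applied to $(u_x)_t = (u_x)_{xx} + f'(u)u_x$ and the strict signs $u_{xx}(\xi_i,t) < 0 < u_{xx}(\tilde\xi_i,t)$ rules out bounded extra extrema.

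Next I would apply the moving-frame argument of subsection \ref{subsec:moving coordinate} centered now at $\xi'(t)$: setting $\tilde w(y,t) := u(y + \xi'(t),t)$, Lemma \ref{lem:E} gives $\|\tilde w(\cdot,t)\|_{H^2(\R)} \leqslant C$ together with $\|\tilde w_{yy} + f(\tilde w)\|_{L^2(\R)} \to 0$. Any $H^2_{\mathrm{loc}}$-subsequential limit $\tilde w^\infty$ is then a nonnegative, bounded, $H^2(\R)$-solution of $(\tilde w^\infty)'' + f(\tilde w^\infty) = 0$ with $(\tilde w^\infty)'(0) = 0$; by Lemma \ref{limitsol} (periodic and active-state types being excluded by the $H^2$-condition) either $\tilde w^\infty \equiv 0$ or $\tilde w^\infty \equiv V$. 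If $\tilde w^\infty \equiv V$, then $u$ asymptotically carries a second ground-state bump at $\xi'(t)$, well-separated from the main bump at $\xi(t)$, and summing the energy contributions on disjoint neighborhoods of the two bumps gives
\bess
\bE[u(\cdot,t)] &\geqslant& \int_0^\infty \big(V'(x-\xi(t))^2 + F(V(x-\xi(t)))\big)\,dx + \int_\R \big((V')^2 + F(V)\big)\,dx - o(1),
\eess
contradicting the monotone decrease of $\bE[u(\cdot,t)]$ toward the energy of a single-bump limit (using $\|u_t\|_{L^2} \to 0$ from Lemma \ref{lem:E}). If $\tilde w^\infty \equiv 0$, then $u$ and $u_{xx}$ at both $\xi'(t)$ and $\tilde\xi(t)$ tend to zero while $u < \alpha$ on a neighborhood (so $f'(u) \to -\lambda^2$); the resulting approximate linearized dynamics $u_t \approx u_{xx} - \lambda^2 u$ admits no positive $H^2$-profile with interior extrema, and Lemma \ref{lem:zero}(2) applied to $u_x$ forces the two adjacent zeros $\xi',\tilde\xi$ to collide at some finite time, dropping $\mathcal{Z}(u_x(\cdot,t))$ below the constant value $2N-1$ and contradicting Lemma \ref{lem:u decreases far}.

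The hardest part will be the energy argument in the first alternative, since at this stage only $H^2_{\mathrm{loc}}$-convergence in the two moving frames is available (global $H^2$-convergence being precisely what the present lemma is a stepping stone toward); the excess bump energy $\int_\R ((V')^2 + F(V))\,dx > 0$ has to be isolated carefully using the $L^2$-smallness of $u_t$ and the gradient-flow identity $\tfrac{d}{dt}\bE[u(\cdot,t)] = -2\|u_t\|_{L^2}^2$, rather than via any direct global convergence statement.
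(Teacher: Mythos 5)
Your overall strategy (rule out a second extremum by passing to a moving frame centered on it and classifying the limit profile) is genuinely different from the paper's, but as written it does not close; there are three concrete gaps. First, the preliminary claim that bounded extra extrema are excluded in case \eqref{trans-1} is unjustified: a small decaying bump sitting in a fixed compact set is perfectly compatible with $u\to 0$ in $C^2_{\rm loc}$ and with the zero-number structure (the zeros of $u_x$ remain simple after time $T$, so $\mathcal{Z}(u_x(\cdot,t))$ is under no obligation to drop). Second, and most seriously, the energy argument in the two-bump alternative is circular. At this stage the only available facts are that $\bE[u(\cdot,t)]$ is nonincreasing and bounded, hence converges to some $E_\infty$ with $\bE[u(\cdot,t)]\geqslant E_\infty$ for all $t$; a lower bound $\bE[u(\cdot,t_n)]\geqslant 2\int_\R\big((V')^2+F(V)\big)-o(1)+(\hbox{rest})$ contradicts nothing unless you can bound $E_\infty$ strictly below that quantity, and the only source of such an upper bound would be global $H^2$-convergence to a single shifted $V$ --- precisely what this lemma is a stepping stone toward. (Note also that the ``rest'' is not signed, since $F<0$ on a right-neighborhood of $\theta$ and $u$ exceeds $\theta$ near its main bump.) Third, in the vanishing alternative nothing forces the adjacent zeros $\xi',\tilde\xi$ of $u_x$ to collide in finite time: Lemma \ref{lem:zero}(2) only says the count drops \emph{if} a degenerate zero occurs, and a positive solution decaying under dynamics close to $u_t=u_{xx}-\lambda^2 u$ can keep non-degenerate interior extrema forever.

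The paper's proof avoids profiles and energies entirely. It first shows $\xi_N(t)-\xi_1(t)\to 0$: a finite positive subsequential limit contradicts \eqref{trans1a} or \eqref{trans2a}, so otherwise $\xi_N(t)-\xi_1(t)\to\infty$ and hence $\xi_N(t)\to\infty$. One then fixes $L>\xi_N(T)$ with $u(L,T)<u(x,T)$ on $[\xi_1(T),L)$, lets $T_1$ be the first time $\xi_N(t)=L$, and studies the reflection difference $\zeta(x,t)=u(x,t)-u(2L-x,t)$ on $[\xi_1(t),\,2L-\xi_1(t)]$: antisymmetry about $x=L$ and the persistence of $\zeta(\xi_1(t),t)>0$ (from the local-maximum property at $\xi_1$ and the monotonicity of $u(\cdot,t)$ beyond $\xi_N$) force $\mathcal{Z}(\zeta(\cdot,t))=1$ up to time $T_1+\ve$, while $x=L$ is a degenerate zero of $\zeta(\cdot,T_1)$ because $u_x(\xi_N(T_1),T_1)=0$; Lemma \ref{lem:zero}(2) then forces the count to drop below $1$, a contradiction. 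Once $\xi_N-\xi_1\to 0$ is known, the non-degeneracy $V''(0)=-f(\theta)<0$ together with the local convergence in the moving frame yields uniqueness of the maximum. This reflection trick, which treats the ``$V$-like'' and ``vanishing'' extra bumps simultaneously, is the idea your proposal is missing.
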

\begin{proof} We first claim that
\begin{equation}\label{claim_xi}
\xi_N (t) - \xi_1 (t) \to 0 \quad \mbox{ as } t \to \infty.
\end{equation}
If $\xi_N(t_n) -\xi_1(t_n)\to \xi_0 >0$ for some sequence $t_n\to \infty$, then this clearly contradicts \eqref{trans1a} or~\eqref{trans2a}.
So if our claim is not true, then
$$
\xi_N(t)-\xi_1(t) \to \infty \quad \mbox{as} \ t\to \infty.
$$
Since $u(\cdot,t)$ is strictly decreasing in $(\xi_N(t),\infty)$, there exists a large $L> \xi_N(T)$ such that
$$
u(L, T ) < u(x,T) \quad \forall \, x\in [\xi_1 (T),  L),
$$
where $T$ is the time in Lemma \ref{lem:u decreases far}. Define
$$
T_1 := \inf\{ t > T \mid \xi_N(t) = L \} \; \in (T, \infty).
$$
Then for any small $\ve>0$ we have $\xi_1(t)<L$ and $2L-\xi_1 (t) >\xi_1 (t)$
when $t\in [T,T_1+\ve]$. Set $I(t):= [\xi_1 (t), 2L-\xi_1(t)]$ and define 
$$
\zeta(x,t):= u(x,t)- u(2L -x, t)\ \mbox{ on }\  I(t)\times [T, T_1 + \ve].
$$
We will derive a contradiction below. When $t\in [T, T_1]$, by choosing $\delta>0$ sufficiently small 
we have $u(\xi_1 (t), t) > u(\xi_1 (t)+\delta, t)$, since $\xi_1(t)$ is a local 
maximum point of $u(\cdot ,t)$. On the other hand,  
$ u (2L -\xi_1(t)-\delta, t) > u(2L -\xi_1 (t), t)$ since $u(\cdot,t)$ is strictly decreasing in 
$[L, \infty)$. Therefore 
\begin{equation}\label{zeta decrease}
\zeta(\xi_1(t),t)>\zeta (\xi_1(t) +\delta,t) \quad \forall \, t\in [T,T_1].
\end{equation}
Since $\zeta(\cdot,t)$ is antisymmetric around $x= L$ on $I(t)$ and
$$
\zeta(x,T) >0 =\zeta(L,T)\quad \forall \, x\in [\xi_1 (T), L),
$$
we have $\zeta(x,t)>0$ in $x\in (\xi_1(t), L)$ as long as $\zeta(\xi_1(t),t)>0$.
Combining with \eqref{zeta decrease} we have $\zeta(\xi_1(t),t) >0$ for $t\in [T,T_1]$.
By continuity, this is true even for $t\in [T,T_1 +\ve]$ provided $\ve>0$ is small.
Consequently, $\mathcal{Z}_{I(t)} (\zeta(\cdot, t)) =1$ for all $t\in [T, T_1 +\ve]$.
By Lemma \ref{lem:zero}, such a result contradicts the fact $x=L$ is a degenerate zero of $\zeta(\cdot, T_1)$: 
$$
\zeta( L, T_1) = \zeta_x (L , T_1) = 2 u_x(L ,T_1) = 2u_x(\xi_N(T_1),T_1) =0.
$$
We now can conclude that~\eqref{claim_xi} holds.

From \eqref{trans1a} or \eqref{trans2a} (depending whether $\xi (t)$ is bounded or not), we know that $u(t,y+\xi (t)) \to V(y)$ locally uniformly as $t \to +\infty$. By standard parabolic estimates, we infer that $$u(t+1,y+\xi (t)) \to V (y)$$ in $C^2_{\rm loc}$ topology with respect to $y$. As $V'' (0)<0$, it follows that, for large $t$, $u(t+1,\cdot)$ reaches a unique local maximum in the interval $ [\xi (t) - \delta, \xi (t)+\delta]$ where $\delta >0$ only depends on $V$. Combined with~\eqref{claim_xi} and the fact that~$T$ was chosen so that the number of maximum points is constant in time, this ends the proof of the lemma.
\end{proof}

Choose now any $\varepsilon \in (0,\alpha)$ and $z_\vep >0$ such that $V( \pm z_\vep) = \vep$. From Lemma~\ref{lem:xiN-xi}, we now know that $u(y+\xi (t) + z_\vep, t) \leqslant u(\xi (t) + z_\vep,t) \to V(z_\vep)$ as $t \to +\infty$ for any $y \geqslant 0$, hence
$$\Big| u(y+\xi (t) + z_\vep,t) - V(y+z_\vep ) \Big| \leqslant 2 \vep$$
for $y\geqslant 0$ and large $t$. When $\xi$ is bounded and \eqref{trans2a} holds, it easily follows that
\bes\label{trans2b} \lim_{t\to\infty} \Big(
\| u(\cdot,t)-V(\cdot-\xi(t))\|_{L^\infty ([0,\infty))}+|\xi(t)-z| \Big)=0.
\ees
In the case when $\xi(t) \to +\infty$, that is \eqref{trans-1} and \eqref{trans1a} hold, we know thanks to the definition of $\xi$ that $u (x,t) < u(\xi (t) - z_\varepsilon,t)$ on the interval $[0,\xi (t)-z_\vep]$. Since $u(0,t) \to 0$ and $u(\xi(t) - z_\vep , t) \to V(-z_\vep) = \vep$ as $t \to +\infty$, and $f (\cdot) < 0$ in $[0,\alpha)$, it easily follows that $\limsup_{t \to \infty} \sup_{x \in [0, \xi (t) - z_\vep]} u(x,t) \leqslant \vep$. From all the above, it is straightforward to conclude that
\bes\label{trans1b} \lim_{t\to\infty}
\| u(\cdot,t)-V(\cdot-\xi(t))\|_{L^\infty ([0,\infty))}=0.
\ees

\subsection{Concentrated Compactness and  Convergence in $H^2([0,\infty))$} {\ }

We first consider  the case where \eqref{trans-1}, hence \eqref{trans1a} and \eqref{trans1b}, hold.

Let $\vep_0>0$ be a number such that $f'<0$ in $[0,\vep_0]$.
Fix  an arbitrary  $\varepsilon\in(0,\vep_0)$ and, as above, let $z_\vep>0$ be the point such that
$V(z_\vep)=\vep$.  Set $J_\vep(t):= \{ x\geqslant 0\;|\; u(x,t)\geqslant \vep\}$.
By \eqref{trans1b}, we have for $t \gg 1$ that $J_\vep (t) = [a(t),b(t)]$,
and $\lim_{t\to\infty}[\xi(t)-a(t)]=\lim_{t\to\infty}[ b(t)-\xi(t)]=z_\vep$. In addition, by \eqref{trans1a},
\bess
\lim_{t\to\infty} \| u(\cdot,t)-V(\cdot-\xi(t))\|_{H^2(J_\vep (t))}=
\lim_{t\to\infty} \|w(\cdot,t)-V(\cdot)\|_{H^2([-z_\vep,z_\vep])}=0.
\eess
Set $J^c_\vep (t) :=[0,\infty)\backslash J_\vep (t) =[0,a(t))\cup(b(t),\infty)$.
We now show that $\|u(\cdot,t)\|_{H^2(J^c_\vep (t))}$ is a small quantity.
Integrating  $uu_t=u u_{xx}+u f(u)$ over $J^c_\vep (t)$ we obtain
\bess
\int_{J^c_\vep (t)}  u u_t  dx  &=& uu_x\Big|_{0}^{a(t)}+ u u_x\Big|_{b(t)}^\infty
+ \int_{J^c_\vep (t)}\Big(u f(u)-u_x^2\Big) dx
\\ & \leqslant &  uu_x\Big|_{b(t)}^{a(t)}
-  \nu \|u\|^2_{H^1(J^c_\vep(t))}
\eess
where $\nu :=\min\{1,\min_{0\leqslant s\leqslant\vep_0}\{- f'(s)\}\}$.  Sending $t\to\infty$ and using $\int_0^\infty| uu_t|\leqslant \|u\|_{L^2} \|u_t\|_{L^2}\to 0$ we derive
\bess
\limsup_{t\to\infty}  \| u\|^2_{H^1(J^c_\vep(t))}\leqslant \lim_{t\to\infty} \frac{1}{\nu} u u_x\Big|^{a(t)}_{b(t)}=
\frac{ 2}{\nu} \Big|V(z_\vep)V'(z_\vep)\Big|. \eess
Hence,
\bess  && \limsup_{t\to\infty} \| u(\cdot,t)-V(\cdot-\xi(t))\|_{H^1([0,\infty))}
\\ &\leqslant&  \limsup_{t\to\infty} \| u(\cdot,t)-V(\cdot-\xi(t))\|_{H^1(J_\vep(t))} +
 \limsup_{t\to\infty} \| u(\cdot,t)-V(\cdot-\xi(t))\|_{H^1(J^c_\vep(t))}
  \\ &\leqslant & \Big(\frac{2}{\nu} |V(z_\vep)V'(z_\vep)|\Big)^{1/2}+ 2\| V(\cdot)\|_{H^1((z_\vep,\infty))}. \eess
 Sending $\vep\searrow 0$ we derive that $u(\cdot,t)-V(\cdot-\xi(t))\to 0$ in $H^1([0,\infty))$. Finally using
  $u_{xx}+f(u) = u_t\to 0$ in $L^2([0,\infty))$ we derive
\begin{equation}\label{convergence H2}
\lim_{t\to\infty} \Big\|u(\cdot, t) - V(\cdot-\xi(t))\Big\|_{H^2 ([0,\infty) )}=0.
\end{equation}

When \eqref{trans-2} instead of \eqref{trans-1} holds,  we have (\ref{trans2a}) and \eqref{trans2b}. A similar discussion as above (with $J_\vep(t)=[0,b(t)]$) shows that  \eqref{convergence H2} holds.
We summarize our result as follows:

\begin{lem}\label{le:Trans} Assume that $\sigma\in[\sigma_*,\sigma^*]$. Then \eqref{convergence H2}
holds for the maximum point $\xi(t)$ of $u(\cdot,t)$ (which is unique for large $t$). In addition,
either {\rm(1)} $\lim_{t\to\infty}\xi(t)=\infty$ or {\rm(2)} $\lim_{t\to\infty}\xi(t)=z$ for  some $z\in \Shift_{ground} (b)$.
\end{lem}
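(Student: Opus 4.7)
The plan is to assemble the lemma from the four preceding subsections. By Lemma~\ref{convergence}, $u(\cdot,t;\sigma\phi)\to v$ in $C^2_{\rm loc}$ for some non-periodic steady state $v$; since $\sigma\in[\sigma_*,\sigma^*]$ lies outside both $\cA$ and $\cB$, $v$ is either $\equiv 0$ (case \eqref{trans-1}) or a shifted ground state $V(\cdot-z)$ with $z\in\Shift_{ground}(b)$ (case \eqref{trans-2}). In case \eqref{trans-2} the shift $\xi(t)$ is forced to converge to $z$ by local $C^2$ convergence and the fact that $V$ has a unique maximum point. In case \eqref{trans-1} we have $u\to 0$ locally in $C^2$ but $\|u(\cdot,t)\|_{L^\infty}\geqslant \alpha$ for all $t$ because $\sigma\notin \cA$; hence the ``active'' maximum position $\xi(t)$ must drift to $+\infty$. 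This already yields the dichotomy in the lemma.

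The core step is the $H^2([0,\infty))$ convergence. I would pass to the moving frame $w(y,t):=u(y+\xi(t),t)$. The energy estimate of Lemma~\ref{lem:E} provides $\sup_t\|u(\cdot,t)\|_{H^2}<\infty$, while $\|w_{yy}+f(w)\|_{L^2}=\|u_t\|_{L^2}\to 0$. Along any sequence $t_n\to\infty$, weak $H^2_{\rm loc}$ compactness then produces a limit $w_\infty\in H^2(\R)$ solving the stationary ODE $w_\infty''+f(w_\infty)=0$ on the limiting domain with $w_\infty(0)\geqslant \alpha$; the phase-plane classification of Lemma~\ref{limitsol} forces $w_\infty=V$, and uniqueness of the limit gives \eqref{trans1a} (resp.~\eqref{trans2a}).

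Next I would upgrade to the global $H^2$ statement in three steps. First, I would establish that $u(\cdot,t)$ has a unique local maximum for large $t$ (Lemma~\ref{lem:xiN-xi}): otherwise $\xi_N(t)-\xi_1(t)\to\infty$, and a reflection argument based on Lemma~\ref{lem:zero}, applied to $\zeta(x,t):=u(x,t)-u(2L-x,t)$ at the first time $\xi_N$ hits a well-chosen large value $L$, produces a contradictory degenerate zero of $\zeta$. Second, uniqueness of the maximum together with strict monotonicity on each side of $\xi(t)$ and the sign condition $f<0$ on $(0,\alpha)$ allow me to promote the local $H^2$ convergence to uniform convergence on $[0,\infty)$, giving \eqref{trans1b}/\eqref{trans2b}. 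Third, I would run the concentrated compactness computation: integrating $uu_t=uu_{xx}+uf(u)$ over the tail $J^c_\vep(t)=\{u<\vep\}$, using $-f(s)/s\geqslant \nu>0$ on $(0,\vep_0)$ and $\int_0^\infty uu_t\to 0$, controls $\limsup_t\|u\|_{H^1(J^c_\vep(t))}^2$ by a constant multiple of $|V(z_\vep)V'(z_\vep)|$, which tends to $0$ as $\vep\searrow 0$. Combined with local $H^2$ convergence on $J_\vep(t)$ this gives $H^1$ convergence on the whole half-line, and $u_{xx}+f(u)=u_t\to 0$ in $L^2$ upgrades it to $H^2$, yielding \eqref{convergence H2}.

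The main obstacle is the third step: in case \eqref{trans-1} the bump is drifting at a speed one has no a priori control over, so interior parabolic estimates cannot deliver $H^2$ convergence on the fixed half-line. The gradient-flow structure (monotone energy dissipation from Lemma~\ref{lem:E}) together with the strict sign $f'(0)=-\lambda^2<0$ is precisely what controls the tail integrals and closes the argument; once this is done for the drifting case, the bounded-shift case follows from the same computation applied to the simpler window $J_\vep(t)=[0,b(t)]$.
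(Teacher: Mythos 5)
Your proposal is correct and follows essentially the same route as the paper: the lemma is indeed proved there by assembling the dichotomy from Lemma~\ref{convergence}, the energy estimates of Lemma~\ref{lem:E}, the moving-frame compactness argument, the uniqueness of the maximum point (Lemma~\ref{lem:xiN-xi}) via the reflection/zero-number argument, and the concentrated-compactness tail estimate obtained by integrating $uu_t=uu_{xx}+uf(u)$ over $J^c_\vep(t)$, followed by the upgrade from $H^1$ to $H^2$ using $u_{xx}+f(u)=u_t\to 0$ in $L^2$. No substantive differences.
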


\subsection{The Sharp Threshold}
We are now in the position to prove the sharpness of the threshold phenomenon exhibited in the previous section.

\begin{lem}\label{eql} For each $\phi\in \mscX(h)$ with $h>0$ and $\sigma_*$, $\sigma^*$ defined in Theorem~\ref{trichotomy}, we have that~$\sigma_* =\sigma^* $.\end{lem}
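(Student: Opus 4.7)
The plan is to argue by contradiction. Suppose $\sigma_*<\sigma^*$ and pick any $\sigma_1<\sigma_2$ in $[\sigma_*,\sigma^*]$. Since $\cA=[0,\sigma_*)$ and $\cB=(\sigma^*,\infty)$ by Theorem~\ref{trichotomy}, both $\sigma_i$ are transition values, and Lemma~\ref{le:Trans} yields $\|u_i(\cdot,t)-V(\cdot-\xi_i(t))\|_{H^2([0,\infty))}\to 0$ for $u_i:=u(\cdot,\cdot;\sigma_i\phi)$ and some functions $\xi_i(t)$, each either bounded or diverging to $\infty$. Setting $w:=u_2-u_1$, the strong maximum principle together with the Hopf boundary lemma (compatible with the Robin condition $w(0,t)=bw_x(0,t)$, $b\geqslant 0$) gives $w>0$ on $[0,\infty)\times(0,\infty)$.

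First I would show $\|w(\cdot,t)\|_{H^2}\to 0$. From $w\geqslant 0$ together with $u_i=V(\cdot-\xi_i(t))+o(1)$ in $L^\infty$, one has $V(x-\xi_2(t))-V(x-\xi_1(t))\geqslant-o(1)$ uniformly in $x\geqslant 0$. Evaluated at $x=\xi_1(t)$ (for $t$ so large that $\xi_1(t)$ lies well inside $[0,\infty)$), this reads $V(\xi_1(t)-\xi_2(t))\geqslant\theta-o(1)$, which since $V$ is even with strict global maximum $\theta$ at $0$ and strictly decreasing on $[0,\infty)$ forces $|\xi_1(t)-\xi_2(t)|\to 0$. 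Consequently $w\to 0$ in $H^2([0,\infty))$, and in particular uniformly on $[0,\infty)$.

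The crucial step is then to derive a contradiction between the strict positivity of $w$ and its decay to zero. The function $w$ satisfies the linear parabolic equation $w_t=w_{xx}+c(x,t)\,w$ with the Robin condition, where $c(x,t):=\int_0^1 f'(u_1+sw)\,ds$ is bounded and converges as $t\to\infty$ to $f'(V(x-\xi_1(t)))$ locally in $L^2$. The Schr\"odinger operator $L\psi:=-\psi''-f'(V)\psi$ on $\R$ has a strictly negative principal eigenvalue $-\mu_0<0$ with a strictly positive eigenfunction $\psi_0\in L^2(\R)$, a classical fact encoding the linear instability of the bistable ground state (see, e.g., \cite{CP,Chen1,Chen2}). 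I would then compare $w$ with a subsolution of the form $\delta_0 e^{\mu_0 t/2}\psi_0(x-\xi_1(t))$ for small $\delta_0>0$: testing against $\psi_0(\cdot-\xi_1(t))$ in $L^2$, the principal spectral gain $(\mu_0/2)\delta_0$ dominates the error coming from $c(\cdot,t)-f'(V(\cdot-\xi_1(t)))$ for $t$ large, and the comparison principle forces $w$ to grow exponentially in $t$, in contradiction with $w\to 0$.

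When $\xi_1(t)$ remains bounded, the same scheme applies with the linearization performed directly on $[0,\infty)$ with the Robin boundary condition around a stationary ground state $V(\cdot-z)$, $z\in\Shift_{ground}(b)$: this steady state is not an energy minimizer of $\bE$ and the corresponding half-line Schr\"odinger operator still has a strictly negative principal eigenvalue with positive eigenfunction, so the same subsolution argument closes the case. I expect the hardest part of the plan to be the subsolution construction in the unbounded-shift case, since the substitution of the moving ansatz produces a drift term $-\xi_1'(t)\psi_0'(\cdot-\xi_1(t))$ that must be absorbed by the exponential gain; this requires $\xi_1'(t)\to 0$, which I would extract from $\|u_{1,t}(\cdot,t)\|_{L^2}\to 0$ in Lemma~\ref{lem:E} combined with $u_1\approx V(\cdot-\xi_1(t))$ in $H^2$ (using $\|V'\|_{L^2}>0$), rather than by invoking the later Theorem~\ref{thm:main2}.
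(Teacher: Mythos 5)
Your opening step (ordering $u_1\leqslant u_2$, both converging in $H^2$ to shifted ground states, hence $|\xi_1(t)-\xi_2(t)|\to 0$ and $w:=u_2-u_1\to 0$) is correct and coincides with the starting point of the paper's proof. The core of your argument, however --- forcing exponential growth of $w$ from the linear instability of $V$ --- has a genuine gap exactly at the boundary, which is the one place where this problem differs from the whole-line setting. The subsolution $\underline w:=\delta_0 e^{\mu_0 t/2}\psi_0(x-\xi_1(t))$ does not satisfy the Robin subsolution requirement at $x=0$: since $\psi_0>0$ and $\psi_0'(-\xi_1)>0$, one has $\underline w(0,t)-b\,\underline w_x(0,t)=\delta_0 e^{\mu_0 t/2}\bigl[\psi_0(-\xi_1)-b\psi_0'(-\xi_1)\bigr]$, which is strictly positive for $b=0$ and, using $\psi_0(-y)\sim Ce^{-\kappa y}$ with $\kappa=\sqrt{\lambda^2+\mu_0}$, remains positive for large $\xi_1$ whenever $b\kappa<1$; the comparison principle therefore cannot be invoked. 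The pointwise comparison also fails to initialize at spatial infinity: by Lemma~\ref{lm:u to 0}, $w(\cdot,t_1)$ decays like a Gaussian in $x$ while $\psi_0$ decays only exponentially, so no $\delta_0>0$ places $\underline w(\cdot,t_1)$ below $w(\cdot,t_1)$ for all $x$. If you instead run the tested version $m(t):=(w,\psi_0(\cdot-\xi_1(t)))$, integration by parts leaves the flux term $-w_x(0,t)\psi_0(-\xi_1(t))+w(0,t)\psi_0'(-\xi_1(t))$, which is an absolute small quantity not controlled by $m(t)$; with no lower bound on $m(t)$ and no lower bound on the growth of $\xi_1(t)$ available at this stage (rates for $\xi$ are only obtained in Section~6, under extra hypotheses on $f$), the inequality $m'\geqslant\tfrac{\mu_0}{2}m-\delta(t)$ with $\delta(t)\to 0$ yields no contradiction. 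Two smaller points: for $b=0$ one has $w(0,t)\equiv 0$, so strict positivity holds only on $(0,\infty)$; and in the bounded-shift case the strict negativity of the principal eigenvalue of the half-line operator with the Robin condition is asserted but not proved (non-minimality of the energy excludes a positive principal eigenvalue, not a zero one).

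For comparison, the paper avoids spectral theory entirely: assuming $\sigma_*<\sigma^*$, it uses the strong maximum principle and compactness (as in Lemma~4.5 of \cite{DM}) to show that $u(\cdot,t_0+\delta;\sigma^*\phi)$ dominates the small translates $u(\cdot-a,t_0;\sigma_*\phi)$, propagates this ordering in time by exploiting that $\xi^*(t)$ stays a positive distance $\gamma$ from $x=0$ (which is what controls the translated comparison function at the boundary), and concludes that $\xi^*(t)-\xi_*(t)\to\gamma>0$, contradicting the first step. To salvage your spectral route you would essentially have to construct the boundary-corrected eigenfunctions of the operators $\cL^\xi$ as in Section~6, which is considerably heavier than this sliding argument.
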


\begin{proof}
By Lemma \ref{lem:xiN-xi}, we can define $\xi^*(t)$ as the unique maximum point of $u(\cdot,t;\sigma^* \phi)$ for large $t$. The function $\xi_*(t)$ is defined in the same way. By comparison, $u(\cdot,t;\sigma_* \phi)$ lies below $u(\cdot,t; \sigma^* \phi)$ and, using Lemma \ref{le:Trans}, we can infer that $\lim_{t\to\infty} |\xi^*(t)-\xi_*(t)|=0$.

We proceed by contradiction and assume that $\sigma_* < \sigma^* $. Then, following the same argument as for Lemma 4.5 in \cite{DM}, there exist positive constants $t_0,\delta$ and~$\epsilon $ such that
\begin{equation}\label{initial}
u(x,t_0+\delta; \sigma^* \phi)>u(x-a,t_0; \sigma_* \phi)\mbox{   for }x>a, \ 0\leqslant a\leqslant \epsilon.
\end{equation}
In particular, if $a=0$, we get $u(0,t+\delta;\sigma^* \phi)\geqslant u(0,t;\sigma_*\phi)$ for $t\geqslant t_0$.

Moreover, by Lemma~\ref{le:Trans} and thanks to the Robin boundary condition, there exists a small positive constant $\gamma$ such that $\gamma < \epsilon$ and $\xi^* (t) > \gamma$ for all $t > t_0 +\delta$. Consequently,
\begin{equation}\label{boundary}
u(\gamma ,t+\delta; \sigma^* \phi)> u (0,t +\delta; \sigma^* \phi) \geqslant u(0,t;\sigma_* \phi)\mbox{   for all }t\geqslant t_0.
\end{equation}
Combining \eqref{initial} and \eqref{boundary}, we have $u(x,t+\delta;\sigma^* \phi)>u(x-\gamma,t;\sigma_* \phi)$ for all $t\geqslant t_0$ and $x\geqslant \gamma$. Using again Lemma~\ref{le:Trans}, it follows that $\lim_{t\to\infty} |\xi^*(t)-\xi_*(t)-\gamma|=0$. Having reached a contradiction, we have proved the lemma.
\end{proof}

Note that the sharp threshold value $\sigma^*$, which is now well defined, depends on both the initial datum $\phi \in \cup_{h>0} \mscX (h)$ and the boundary condition parameter $b \geqslant 0$. We denote it here by $\sigma^* (\phi,b)$ and in the following theorem, which will end the proof of Theorem~\ref{thm:main}, describe its properties as a function of $\phi$ and $b$.
\begin{thm}\label{contsigma} The function $(\phi,b) \mapsto \sigma^* (\phi,b)$ is continuous in $\cup_{h>0}\mscX(h) \times [0,\infty)$ under the $L^{\infty} \times \R$-topology, and is nonincreasing with respect to both $b$ and $\phi$.
  \end{thm}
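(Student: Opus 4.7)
My plan is to establish the two monotonicity assertions by comparison principles and then to derive continuity by squeezing $\sigma^*$ between upper and lower semicontinuity estimates, obtained respectively from the sufficient spreading criterion of Lemma~\ref{spreading}(2) and from the openness of $\cA$ established in the proof of Theorem~\ref{trichotomy}. Standard continuous dependence for \eqref{p} on the pair $(\phi,b)\in L^\infty\times\R$ on finite cylinders supplies the bridge in both directions.

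\emph{Monotonicity.} For $\phi$, the parabolic comparison principle gives $u(\cdot,\cdot;\sigma\phi_1,b)\leqslant u(\cdot,\cdot;\sigma\phi_2,b)$ whenever $\phi_1\leqslant \phi_2$ pointwise, hence $\sigma^*(\phi_2,b)\leqslant\sigma^*(\phi_1,b)$. For $b$, fix $\phi,\sigma,b_1<b_2$ and set $w:=u(\cdot,\cdot;\sigma\phi,b_2)-u(\cdot,\cdot;\sigma\phi,b_1)$. Then $w_t=w_{xx}+c(x,t)w$ with bounded $c$, $w(\cdot,0)\equiv 0$, and
\[
w(0,t)-b_2\,w_x(0,t)=(b_2-b_1)\,u_x(0,t;\sigma\phi,b_1)>0,
\]
where $u_x(0,t)>0$ is the Hopf-lemma fact noted in the proof of Lemma~\ref{lem:u decreases far}. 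After the change $\tilde w=e^{-Kt}w$ with $K\geqslant\|c\|_\infty$, the strong parabolic maximum principle excludes a negative interior minimum of $\tilde w$; a negative minimum at $x=0$ would force $w_x(0,\cdot)\geqslant 0$ and contradict the displayed inequality; and the Gaussian tail from Lemma~\ref{lm:u to 0} prevents the minimum escaping as $x\to\infty$. Hence $w\geqslant 0$, so $\sigma^*$ is nonincreasing in $b$.

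\emph{Continuity.} For upper semicontinuity, let $\sigma>\sigma^*(\phi_0,b_0)$, so spreading holds at $(\phi_0,b_0)$. Pick $m\in(\alpha,1)$ and $r\geqslant 0$ with $v_*>m$ on $[r,r+2L(m)]$; Lemma~\ref{convergence} yields $T$ with $u(\cdot,T;\sigma\phi_0,b_0)>m+2\delta$ on that interval for some $\delta>0$. Continuous dependence on $(\phi,b)$ over $[0,r+2L(m)]\times[0,T]$ preserves this strict inequality on an $L^\infty\times\R$-neighbourhood of $(\phi_0,b_0)$, and Lemma~\ref{spreading}(2) then forces spreading, so $\sigma^*(\phi,b)\leqslant\sigma$. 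For lower semicontinuity, let $\sigma<\sigma^*(\phi_0,b_0)$, so vanishing holds; choose $T$ with $\|u(\cdot,T;\sigma\phi_0,b_0)\|_\infty<\alpha-2\eta$ for some $\eta>0$. Since the explicit Gaussian majorant in \eqref{decay-1} is independent of $b$, an $X$ providing $u(x,T;\sigma\phi,b)<\alpha/2$ on $[X,\infty)$ can be found uniformly on a suitable neighbourhood of $(\phi_0,b_0)$. Continuous dependence on $[0,X]\times[0,T]$ then gives $\|u(\cdot,T;\sigma\phi,b)\|_\infty<\alpha$ throughout the neighbourhood, and comparison with the ODE $\eta'=f(\eta)$, exactly as in the openness-of-$\cA$ step of Theorem~\ref{trichotomy} (which uses only $f<0$ on $(0,\alpha)$ and $b\geqslant 0$), forces vanishing, so $\sigma^*(\phi,b)\geqslant\sigma$.

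\emph{Main obstacle.} The only genuinely delicate point is the monotonicity in $b$, which demands a maximum principle on the half-line for a linear parabolic equation with an inhomogeneous Robin boundary condition of a definite sign; this sign is supplied by the Hopf-lemma positivity of $u_x(0,t)$. Uniformity in $b$ of the spatial tail used in lower semicontinuity, by contrast, comes essentially for free, since the barrier of Lemma~\ref{lm:u to 0} is built from the whole-line heat kernel and is blind to the boundary parameter.
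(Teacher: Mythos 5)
Your proposal is correct and follows essentially the same route as the paper: upper semicontinuity via the spreading criterion of Lemma~\ref{spreading}(2) plus continuous dependence, lower semicontinuity via vanishing below $\alpha$ and the sign of $f$ on $(0,\alpha)$, then the identity $\sigma_*=\sigma^*$ to close the squeeze. The only difference is that you spell out the monotonicity in $b$ (which the paper dismisses as an immediate consequence of the comparison principle) via an explicit maximum-principle argument on the difference, correctly identifying that the sign $u_x(0,t)>0$ from the Hopf lemma is what makes the $b_1$-solution a subsolution of the $b_2$-problem, and you also make explicit the $b$-uniform Gaussian tail bound needed for the $L^\infty([0,\infty))$ smallness in the vanishing step.
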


  \begin{proof} In this proof, we will denote by $u(\cdot,\cdot;u_0;b)$ the solution of \eqref{p}, so that the dependance on $b$ of the solution also appears explicitly.

First, note that the monotonicity is an immediate consequence of the comparison principle. Next, fix $(\phi_0,b_0) \in \cup_{h>0}\mscX(h) \times [0,\infty)$, and consider a sequence $(\phi_n ,b_n)_{n=1}^\infty $ such that, for any $n \in \mathbb{N}$, $(\phi_n, b_n) \in \cup_{h>0}\mscX(h) \times [0,\infty)$ and, as $n \to \infty$, $b_n \to b_0$ and $\phi_n \to \phi_0$ uniformly.

For any fixed $\vep>0$,
   $\lim_{t\to\infty} u(\cdot,t;[\sigma^*(\phi_0 ,b_0 )+\vep]\phi_0;b_0)=v_*(\cdot)$ in $C^2_{\mathrm{loc}}([0,\infty))$. Let  $r>0$ be a big number such that $v_*>\theta$ in $[r,r+2L(\theta)]$.
  Then there exists $T>0$ such that $u(\cdot,T; [\sigma^*(\phi_0,b_0)+\vep]\phi_0;b_0)>\theta$ in $[r,r+2L(\theta)]$.
  Consequently, there  exists $N>0$ such that for each $n>N$,   $u(\cdot,T; [\sigma^*(\phi_0,b_0)+\vep]\phi_n ;b_n )>\theta$ in $[r,r+2L(\theta)]$. Then by Lemma \ref{spreading} (2), $\sigma^*(\phi_n ,b_n )\leqslant \sigma^*(\phi_0,b_0)+\vep$. Thus, $\limsup_{n\to\infty} \sigma^*(\phi_n ,b_n ) \leqslant \sigma^*(\phi_0,b_0)$.
  \smallskip

  Next, for any fixed $\vep\in(0,\sigma_*(\phi_0,b_0))$, $\lim_{t\to\infty} \|u(\cdot,t;[\sigma_*(\phi_0,b_0)-\vep]\phi_0 ;b_0 )\|_{L^\infty([0,\infty))}=0$. Hence, there exists $T>0$ such that
  $u(\cdot,T;[\sigma_*(\phi_0 , b_0)-\vep]\phi_0 ;b_0)\leqslant\alpha/3$. Consequently, there exists $N_1>0$ such that
  for every $n>N_1$,   $u(\cdot,T;[\sigma_*(\phi_0,b_0)-\vep]\phi_n ;b_n )\leqslant\alpha/2$. Since $f<0$ in $(0,\alpha)$,
  this implies that $\lim_{t\to\infty}  \|u(\cdot,t;[\sigma_*(\phi_0,b_0)-\vep]\phi_n ;b_n)\|_{L^\infty} =0$ so
  $\sigma_*(\phi_n,b_n)\geqslant \sigma_*(\phi_0 ,b_0)-\vep$. Thus, $\liminf_{n\to\infty} \sigma_*(\phi_n ,b_n )\geqslant\sigma_*(\phi_0,b_0)$.

  Finally, since for each $\phi\in \cup_{h>0}\mscX(h)$ and $b\geqslant 0$, $\sigma^*(\phi ,b)=\sigma_*(\phi ,b)$, we derive that
  $\sigma^*(\phi ,b )=\sigma_*(\phi ,b )$ is continuous in $\cup_{h>0} \mscX(h) \times [0,\infty)$ under $L^\infty \times \R$ topology.
  \end{proof}

\section{The Transition Case (2)}

This section is devoted to the proof of Theorem~\ref{thm:main2}, which is equivalent to the combination of the three lemmas below. The first one proves that the motion $\xi (t)$ is asymptotically slow. This holds without any additional regularity assumption on the nonlinearity~$f$.

\begin{lem}\label{o(t)?}
Let $\xi (t)$ be defined as in Theorem~\ref{thm:main}. Then it can be replaced by another $C^1$ function
$\tilde{\xi} (t)$ with $\tilde{\xi} '(t) \to 0$, $\tilde{\xi}  (t) = o(t)$ and $\tilde{\xi}(t)-\xi(t) = o(1)$
as $t \to +\infty$.
\end{lem}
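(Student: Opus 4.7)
The key ingredient is the gradient-flow dissipation bound from Lemma~\ref{lem:E}, namely $\int_1^\infty \|u_t(\cdot,\tau)\|_{L^2}^2\, d\tau < \infty$. Combined with the $H^2$-convergence of $u(\cdot,t)$ to $V(\cdot - \xi(t))$ established in Lemma~\ref{le:Trans}, this will force the motion $\xi(t)$ to be asymptotically slow. The plan is first to establish an equicontinuity estimate for $\xi$ on time-intervals of unit length, then to deduce $\xi(t) = o(t)$ and to construct $\tilde{\xi}$ by a simple time-averaging.

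For $s\in[t,t+1]$, Cauchy-Schwarz gives
\bess
\|u(\cdot,t)-u(\cdot,s)\|_{L^2}\leqslant \Big(\int_t^{t+1}\|u_\tau\|_{L^2}^2\, d\tau\Big)^{1/2} \to 0
\eess
uniformly in $s$ as $t\to\infty$. Combining this with Lemma~\ref{le:Trans} yields
\bess
\lim_{t\to\infty}\sup_{s\in[t,t+1]}\|V(\cdot-\xi(t))-V(\cdot-\xi(s))\|_{L^2([0,\infty))} = 0.
\eess
The central claim is that this implies
\bes\label{eqcont}
\sup_{s\in[t,t+1]} |\xi(t)-\xi(s)| \longrightarrow 0 \quad\text{as }\; t\to \infty.
\ees
I would argue \eqref{eqcont} by contradiction: suppose that along sequences $t_n\to\infty$ and $s_n\in[t_n,t_n+1]$, one has $\xi(t_n)-\xi(s_n)\to L\neq 0$. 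By Lemma~\ref{le:Trans}, either $\xi(t_n)$ is bounded or $\xi(t_n)\to\infty$ (and hence $\xi(s_n)\to\infty$ as well, the difference being bounded). In the bounded case, after passing to a subsequence with $\xi(t_n)\to z_1$, one gets $V(\cdot-\xi(t_n))-V(\cdot-\xi(s_n))\to V(\cdot-z_1)-V(\cdot-z_1+L)$ in $L^2$, which is nonzero since translates of $V$ are $L^2$-injective; contradiction. In the unbounded case, the change of variable $y = x - \xi(t_n)$ transforms the $L^2([0,\infty))$-norm into $\int_{-\xi(t_n)}^{\infty} |V(y) - V(y+\xi(t_n)-\xi(s_n))|^2\, dy$, which by dominated convergence tends to $\|V(\cdot) - V(\cdot + L)\|_{L^2(\R)}^2 > 0$; contradiction again.

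Once \eqref{eqcont} holds, $\xi(t)=o(t)$ follows by a telescoping argument: given $\vep>0$ and $T_\vep$ so that $|\xi(s+1)-\xi(s)|\leqslant \vep$ for all $s\geqslant T_\vep$, summing yields $|\xi(t)|\leqslant \vep\, t + O(1)$ for large $t$, and $\vep$ being arbitrary gives the conclusion. The smoothing is then defined by
\bess
\tilde{\xi}(t):= \int_0^1 \xi(t+s)\,ds
\eess
for $t$ larger than the $T$ of Lemma~\ref{lem:u decreases far}. Since $\xi\in C^1$, $\tilde{\xi}\in C^1$ with $\tilde{\xi}'(t) = \xi(t+1)-\xi(t)\to 0$ by \eqref{eqcont}; also $|\tilde{\xi}(t)-\xi(t)|\leqslant \sup_{s\in[0,1]}|\xi(t+s)-\xi(t)|\to 0$, which in turn implies $\tilde{\xi}(t)=o(t)$. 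Finally, the $H^2$-convergence in Lemma~\ref{le:Trans} carries over to $\tilde{\xi}$, since $\|V(\cdot-\xi(t))-V(\cdot-\tilde{\xi}(t))\|_{H^2}\to 0$ by the smoothness and decay of $V$.

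The main obstacle is the contradiction argument establishing \eqref{eqcont}, particularly the unbounded subcase: one must exploit that the effective support of $V(\cdot-\xi(t_n))|_{[0,\infty)}$ eventually lies arbitrarily deep inside the half-line, so that $L^2([0,\infty))$-distances between translates of $V$ become $L^2(\R)$-distances, where the rigidity $\|V(\cdot)-V(\cdot+L)\|_{L^2(\R)}>0$ for $L\neq 0$ closes the argument.
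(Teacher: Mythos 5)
Your proof is correct in substance and shares the paper's overall skeleton --- establish the unit-interval equicontinuity $\sup_{s\in[t,t+1]}|\xi(s)-\xi(t)|\to 0$, telescope to get $\xi(t)=o(t)$, then smooth --- but it reaches the key equicontinuity step by a genuinely different mechanism. The paper deduces it from the locally uniform convergence $u(y+\xi(s),t+s)\to V(y)$ in both $s$ and $y$ (i.e.\ parabolic compactness in the moving frame, as in Lemma~\ref{lem:xiN-xi}), and then smooths by a $C^1$ interpolation matching $\xi$ at the integers with derivative bounded by $2\alpha(n)$ on $[n,n+1]$. You instead combine the dissipation bound $\int_1^\infty\|u_t\|_{L^2}^2\,d\tau<\infty$ from Lemma~\ref{lem:E} with the $H^2$-convergence of Lemma~\ref{le:Trans} and the $L^2$-rigidity of translates of $V$, and smooth by the moving average $\int_t^{t+1}\xi$. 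Your route is more quantitative on the PDE side (it uses only the square-integrability of $u_t$, not a further compactness argument), at the price of needing the translation-rigidity statement for $V$; both buy the same conclusion.

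One small point to patch in your contradiction argument for \eqref{eqcont}: from $|\xi(t_n)-\xi(s_n)|\geqslant\delta>0$ you cannot always extract a subsequence with a \emph{finite} limit $L$, since a priori $\xi$ could oscillate by an unbounded amount over a unit time interval. You should add the case $|\xi(t_n)-\xi(s_n)|\to\infty$, which is handled by the same rigidity idea: the two translates of $V$ then have essentially disjoint supports, so $\|V(\cdot-\xi(t_n))-V(\cdot-\xi(s_n))\|_{L^2([0,\infty))}^2$ tends to a positive limit (at least $\|V\|_{L^2(\R)}^2$ in the unbounded-shift case), again contradicting the convergence to $0$. With that addition the argument is complete.
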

\begin{proof}
The argument is similar to the one we used in Lemma~\ref{lem:xiN-xi}. From the previous section, we know that
$u(y+\xi (s), t+s)  \to V(y)$ as $t \to +\infty$ locally uniformly with respect to both~$s$ and~$y$. It easily follows that
$$\alpha (t) := \sup_{s \in [-1,2]} | \xi (t+s) - \xi (t) | \to 0 \quad \mbox{ as } t \to +\infty.$$
One can then construct a $C^1$ function $\tilde{\xi}$ so that, for any $n \in \mathbb{N}$, $\tilde{\xi} (n)= \xi (n)$ and $|\tilde{\xi} '(t)| \leqslant 2 \alpha (n)$ on $[n,n+1]$. Then $\tilde{\xi} ' (t) \to 0$ and $\tilde{\xi} (t) - \xi (t) = o(1)$ as $t\to +\infty$. The latter implies that the convergence in~(iii) of Theorem~\ref{thm:main} still holds with $\tilde{\xi}$ instead of $\xi$. The fact that $\tilde{\xi} (t)=o(t)$ is an immediate consequence, which ends the proof of the lemma.
\end{proof}

In the previous section, we have shown that in the transition case, the convergence to a ground state could take place with either finite or infinite shift. We now provide some simple criteria, depending on $f$ and $b$, such that any of the two occurs.

\begin{lem}\label{sufficient and necessary} For any $\phi \in \cup_{h >0}\mscX (h)$, define $\xi(t)$ as in Theorem~\ref{thm:main}. Then
\begin{enumerate}
 \item[{\rm(1)}] $\lim\limits_{t\to \infty} \xi (t)=z\in \Shift_{ground} (b)$ for some $\phi \in \cup_{h>0} \mscX (h)$ $\Longleftrightarrow$ $b=\frac{s_0}{\sqrt{F(s_0)}}$ for some $s_0\in (0,\theta )$; \medskip

 \item[{\rm(2)}]$\lim\limits_{t\to \infty} \xi (t)=\infty $ for some $\phi \in \cup_{h>0}\mscX (h)$ $\Longleftrightarrow$ there exists a small $\epsilon >0 $ such that  $b<\frac{s}{\sqrt{F(s)}}$ for all $s\in (0,\epsilon )$.
\end{enumerate}
\end{lem}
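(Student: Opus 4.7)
The key object is the function $g(s):=s-b\sqrt{F(s)}$ for small $s>0$. Since $V$ is even with $V'(-z)=\sqrt{F(V(-z))}$ for $z>0$, we have $V(-z)-bV'(-z)=g(V(-z))$, so \eqref{ground_set} reads $z\in\Shift_{ground}(b)\Longleftrightarrow g(V(-z))=0$. Moreover, since $V(\cdot-z)$ solves $v''+f(v)=0$ pointwise, it is a super-solution of~\eqref{p} when $g(V(-z))\geqslant 0$ and a sub-solution when $g(V(-z))\leqslant 0$. These shifted ground states will serve as the dynamic barriers throughout the argument.

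The forward direction of~(1) is immediate from~\eqref{ground_set}: if $\xi(t)\to z\in\Shift_{ground}(b)$, then $s_0:=V(-z)\in(0,\theta)$ satisfies $b=s_0/\sqrt{F(s_0)}$. For its converse, given $z\in\Shift_{ground}(b)$, I would produce $\phi\in\Sigma_1$ by smoothly truncating the steady state: choose $\phi_R\in\mscX(R+1)$ with $\phi_R=V(\cdot-z)$ on $[0,R]$ and $\phi_R\leqslant V(\cdot-z)$ everywhere, so that comparison gives $u(\cdot,t;\phi_R)\leqslant V(\cdot-z)$ and excludes spreading at $\sigma=1$. Continuity of $\sigma^*$ in $\phi$ (Theorem~\ref{contsigma}) together with the quasi-equilibrium of $\phi_R$ as $R\to\infty$ should force $\sigma^*(\phi_R,b)=1$ for large~$R$; the transition regime then applies at $\sigma=1$, and the upper bound $u\leqslant V(\cdot-z)$ combined with the nontrivial $H^2$-mass of any shifted ground state rules out $\xi(t)\to\infty$, placing $\phi_R\in\Sigma_1$.

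The forward direction of~(2) is proved by contraposition: assume $b\geqslant s_n/\sqrt{F(s_n)}$ along some $s_n\to 0^+$ and set $z_n:=V^{-1}(s_n)\to+\infty$, so that $g(s_n)\leqslant 0$ and $V(\cdot-z_n)$ is a sub-solution of~\eqref{p}. If $\xi(t)\to\infty$ for some $\phi$, pick $t_n$ with $\xi(t_n)=z_n$; using the embedding $H^2([0,\infty))\hookrightarrow L^\infty([0,\infty))$ and the $H^2$-convergence of $u(\cdot,t_n)$ to $V(\cdot-z_n)$, we arrange $u(\cdot,t_n)\geqslant V(\cdot-z_n-\delta_n)$ pointwise for some $\delta_n\to 0$, the shift being chosen so that the perturbed barrier remains a sub-solution by continuity of $g$. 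Parabolic comparison then gives $u(\cdot,t)\geqslant V(\cdot-z_n-\delta_n)$ for all $t\geqslant t_n$; evaluating at~$0$ and invoking the asymptotic profile $u(\cdot,t)\approx V(\cdot-\xi(t))$ yields $V(\xi(t))\geqslant V(z_n+\delta_n)+o(1)$, i.e.\ $\xi(t)\leqslant z_n+\delta_n+o(1)$, contradicting $\xi(t)\to\infty$ for large~$n$. For the converse of~(2), if $\Shift_{ground}(b)=\emptyset$ the conclusion follows directly from Remark~\ref{zground}; otherwise $\Shift_{ground}(b)$ has $z$-values bounded by $V^{-1}(\epsilon)$, and I would take $\phi$ to be a narrow bump centered far to the right (say at $R\gg V^{-1}(\epsilon)$) so that the threshold solution starts with $\xi(0)\approx R$, and use the strict super-solution $V(\cdot-z^*)$ with $V(-z^*)<\epsilon$ to prevent $\xi(t)$ from ever descending into $\Shift_{ground}(b)$; by Lemma~\ref{le:Trans}, the only remaining alternative is $\xi(t)\to\infty$.

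The main obstacle is the comparison step in the forward direction of~(2): upgrading the $H^2$-closeness of $u(\cdot,t_n)$ and $V(\cdot-z_n)$ at a single time to a pointwise lower bound after a small shift, particularly in the degenerate case $g(s_n)=0$ where the candidate barrier is an exact steady state rather than a strict sub-solution, and ensuring $\delta_n\to 0$ uniformly so that the final inequality $\xi(t)\leqslant z_n+\delta_n+o(1)$ still yields a genuine contradiction.
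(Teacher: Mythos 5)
The overall strategy of using shifted ground states as dynamic barriers is the right heuristic, but all three delicate steps of your proposal have genuine gaps, and in each case the paper circumvents the difficulty with a zero-number (intersection-counting) argument that your barrier-comparison approach does not reproduce.

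For the converse of~(1), your plan hinges on showing $\sigma^*(\phi_R,b)=1$ for large $R$, but this is generally false: for any finite truncation radius $R$ one has $\phi_R\lneqq V(\cdot-z_0)$, so $u(\cdot,t;\phi_R)\to 0$ and hence $\sigma^*(\phi_R,b)>1$ strictly (this is precisely what the paper establishes for its own choice $\phi_0$). Continuity of $\sigma^*$ gives $\sigma^*(\phi_R,b)\to 1$ but not equality at any finite $R$, so you cannot keep the upper barrier $u\leqslant V(\cdot-z_0)$ at the threshold amplitude. The paper instead exploits that $\sigma^*(\phi_0)\phi_0$ crosses $V(\cdot-z_0)$ \emph{exactly once}: if $\xi(t)\to\infty$ then $u(0,t)\to 0<V(-z_0)$, so by Lemma~\ref{lem:intersection} the unique intersection must degenerate and vanish, forcing $u<V(\cdot-z_0)$ for large $t$, which contradicts $u\approx V(\cdot-\xi(t))$ with $\xi(t)\to\infty$. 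No pointwise barrier is maintained; only the topological count is.

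For the forward direction of~(2), you correctly flag the obstacle yourself: $H^2$-closeness at a single time $t_n$ does not give $u(\cdot,t_n)\geqslant V(\cdot-z_n-\delta_n)$ pointwise, because near $x=0$ (and indeed in both tails) $u$ may decay strictly faster than the shifted ground state (e.g.\ Dirichlet-type decay), so no finite shift $\delta_n$ repairs the inequality. The paper avoids this entirely: it fixes a large $N$, observes that $u(\cdot,1;\phi_1)$ and $V(\cdot-x_N)$ intersect exactly once, and then shows via Lemma~\ref{lem:zero} (plus a boundary analysis for the case $V(-x_N)<bV'(-x_N)$, where intersections could only appear from $x=0$) that the count stays $\leqslant 1$ forever; once $u(0,t)$ drops below $V(-x_N)$, the ordering $u<V(\cdot-x_N)$ follows and contradicts $\xi(t)\to\infty$. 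Again, no pointwise sub-solution comparison is needed.

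For the converse of~(2) in the mixed case, your super-solution barrier $V(\cdot-z^*)$ does not dominate the threshold initial datum: a narrow bump at amplitude $\sigma^*$ generically has peak value well above $\theta$ (the maximum of $V$), so $\sigma^*\phi\not\leqslant V(\cdot-z^*)$ at $t=0$ and the comparison never starts. The paper instead chooses $\phi_2$ supported entirely to the right of the (compact) support of the generalized sub-solution $v_m(\cdot+x'_m)$, so the two profiles intersect exactly once initially. If $\xi(t)$ were to converge to some $z\in\Shift_{ground}(b)$, then $u(0,t)$ would rise past $v_m(x'_m)=s_1<s_0=V(-x_0)$, forcing (again by zero-number on a bounded interval) $u>v_m(\cdot+x'_m)$ on its support for large $t$, which triggers spreading by Lemma~\ref{spreading} — a contradiction at the threshold. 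This sub-solution–plus–spreading route is structurally different from your super-solution argument and is what makes the case work.
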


\begin{proof}
(1)The $`` \Rightarrow "$ part is obvious by part (3) of Lemma \ref{limitsol}.

For the $`` \Leftarrow "$ part, we denote $z_0:=$inf$\{z>0\mid z\in  \Shift_{ground} (b)\}$. It is obvious that $z_0>0$ and $V(\cdot-z_0)$ is a stationary solution of \eqref{p}. For any $\rho >0$, we can construct an initial datum~$\phi_0$ such that
\begin{equation}
\begin{cases}
\phi_0(x)\equiv V(x-z_0), & x\in [0,2z_0],\\
\phi_0(x)\equiv 0, & x\in [2z_0+\rho,\infty),\\
\phi_0'(x)<V'(x-z_0)<0, & x\in (2z_0,2z_0+\rho],\\
\phi_0\in C^1([0,2z_0+\rho]).\\
\end{cases}
\end{equation}

We first show that $u(\cdot,t;\phi_0)$ converges to $0$. Indeed, $u(\cdot,t;\phi_0)<V(\cdot-z_0)$ for all $t>0$ by comparison principle.
In particular, $u(0,t;\phi_0)<V(-z_0)$ for all $t>0$. Noting the decay rates of $u$ and $V$ at infinity we have $u(\cdot,1;\phi_0)<V(\cdot-z_0 +\epsilon)$ for some small $\epsilon>0$, then $u(\cdot,t;\phi_0)<V(\cdot-z_0 +\epsilon)$ for all $t>1$ since $V(x-z_0-\epsilon)$ is a
stationary solution of the equation in \eqref{p} and
$$
u(0,t;\phi_0)< V(x-z_0)|_{x=0} <V(\cdot-z_0 +\epsilon)|_{x=0} \mbox{ for all } t>1.
$$
Therefore, $u(\cdot,t;\phi_0)$ converges to $0$ rather than $V(\cdot-z)$ for any $z\in \Shift_{ground}(b)$.
This implies that $\sigma^*(\phi_0)>1$.
Because of the fact that $\phi_0'(\cdot)<V'(\cdot-z_0)$ for $x\in (2z_0,2z_0+\rho]$, we know that $\sigma^*(\phi_0)\phi_0$ has exactly
one intersection point with $V(\cdot-z_0)$ in $[0,\infty)$.

If $\lim\limits_{t\to \infty} \|u(\cdot,t;\sigma^*(\phi_0)\phi_0) - V(\cdot - \xi(t))\|_{H^2([0,\infty))}=0$ with $\lim\limits_{t\to \infty}\xi(t)=\infty $, then it follows that $\lim\limits_{t\to \infty}u(0,t;\sigma^*(\phi_0)\phi_0)=0$.
By the zero number properties: Lemmas \ref{lem:zero} and \ref{lem:intersection}, there exists a time $T_0$ such that
the unique zero point $x(t)$ of $u(\cdot,t;\sigma^*(\phi_0)\phi_0) - V(\cdot-z_0)$ becomes degenerate at $t=T_0$ and
will disappear after $T_0$. Hence $u(x,t;\sigma^*(\phi_0)\phi_0)<V(x-z_0)$ for all $t>T_0$.
This contradicts the fact that $\xi(t) \to \infty$, and we conclude that $\xi (t) \to z \in \Shift_{ground} (b)$.

\medskip

(2) We first prove the $`` \Rightarrow "$ part. There exists an initial datum $\phi_1$ such that $\lim_{t\to \infty} \|u(\cdot,t;\phi_1) - V(\cdot - \xi(t))\|_{H^2([0,\infty))}=0$ with $\xi(t) \to \infty\ (t\to \infty)$. If the conclusion does not hold, then there exists a sequence $s_n\rightarrow 0$ such that $b\sqrt{F(s_n)}\geqslant s_n $ for all $n\in \mathbb{N}$. A consequence is that $b>0$, and so
$u(0,1;\phi_1)= bu_x(0,1;\phi_1) >0$ by Hopf lemma.

Take $x_n >0$ be such that $V(-x_n)= s_n$, then for each~$n$, $V' (-x_n) = \sqrt{F(V(-x_n))}=\sqrt{F(s_n)}$,
hence $V (-x_n) \leqslant b V' (-x_n)$. This implies that $V(x-x_n)$ is a lower solution of \eqref{p} for each~$n$.
Since $x_n\rightarrow \infty $ as $s_n\to 0$ and $u_x (\cdot,1;\phi_1) <0$ for large $x$, we can take a large $N$ such that
$$
u(x,1;\phi_1) > V(x-x_N) \mbox{ in } [0, \bar{x}) \mbox{ for some } \bar{x}>0,
\quad \mbox{and} \quad  u(x,1;\phi_1) < V(x-x_N) \mbox{ in } (\bar{x}, \infty).
$$
If $V$ satisfies the Robin boundary condition $V(-x_N) = b V' (-x_N)$, then Lemma~\ref{lem:intersection} applies. If not, the zero number argument can still be used to prove that $\mathcal{Z} (u(\cdot,t;\phi_1) - V (\cdot-x_N)) \leqslant 1$ for all $t \geqslant 1$. Indeed, proceed by contradiction and assume that $V(-x_N) < b V' (-x_N)$ and that there exists
$$T_1 = \inf \{ t \geqslant 1 \; | \  \mathcal{Z} (u(\cdot,t;\phi_1) - V (\cdot-x_N)) \geqslant 2 \} \in ( 1, \infty).$$
As Lemma~\ref{lem:zero} applies on any interval $[\delta,\infty)$ with $\delta >0$, the only possibility is that intersections appear from the boundary, and  $u (0,T_1;\phi_1) = V (-x_N)$. From the boundary conditions, we also have $u_x (0,T_1;\phi_1) = \frac{1}{b} u (0,T_1;\phi_1) < V' (-x_N)$ and thus (recalling, for instance, the decay rates of $u$ and $V$ as $x \to \infty$), $u (\cdot, T_1;\phi_1) < V(\cdot - x_N)$
in $(0,\infty)$. For two intersections to appear on the boundary from time $T_1$, the unique zero point of $u (\cdot,T_1;\phi_1) - V (\cdot - x_N)$ must be degenerate, which is not the case. We conclude that $\mathcal{Z} (u(\cdot,t;\phi_1) - V (\cdot-x_N)) \leqslant 1$.

Finally, we also know that $u(0,t;\phi_1) < V( - x_N)$ for any large $t$. It follows that $u(\cdot,t;\phi_1) < V (\cdot - x_N)$ in the whole domain $[0,\infty)$ for any large $t$. This, however, contradicts our assumption $u(\cdot,t;\phi_1)\to V(\cdot- \xi(t))$ with $\xi(t)\to \infty$.

For the $''\Leftarrow''$ part, we divide it into two cases. The first case is that $b<\frac{s}{\sqrt{F(s)}}$ for all $s\in (0,\theta )$. In this case, it is obvious that $\lim\limits_{t\to \infty} \xi(t)=\infty $ for any initial datum $\phi$.

The second case is $b \sqrt{F(s_0)} =s_0$ for some $s_0\in(\epsilon,\theta)$. Without loss of generality, we assume $s_0:= \max \{s\mid b \sqrt{F(r)} < r \mbox{ for all } r\in (0,s)\}$. Choose $x_0>0$ such that $V(-x_0)=s_0$. It is easily seen from the phase plane that
for any $m$ with $m-\theta>0$ small, the trajectory passing through $(m,0)$ (lying close to that of $V$) also intersects with
the line $v=bv'$ at some point $(s_1, \frac{s_1}{b})$ with $s_1 <s_0$.
Denote as in \eqref{finite} the compactly supported stationary solution corresponding to this trajectory by $v_m$. Assume $x'_m>0$ satisfies
$v_m(x'_m) =bv'_m (x'_m)=s_1$, then $v_m (x'_m)<V(-x_0)$.
It is easy to construct an initial datum $\phi_2$ such that $\phi_2 \equiv 0$ in the support $[0, 2L_m -x'_m]$ of $v_m (\cdot + x'_m)$ and, up to multiplication by the threshold parameter $\sigma^*$, such that the solution $u(\cdot,t;\phi_2)$ converges to $V(\cdot-\xi(t))$ for some $\xi(t)$. In particular, since $v'_m (2L_m)<0$, the solution $u(\cdot,t;\phi_2)$ intersects $v_m (\cdot + x'_m)$ at exactly one point for small times. If $\xi(t)\to z\in \Shift_{ground}(b)$, then $\lim_{t\to \infty}u(0,t;\phi_2)\geqslant V(-x_0)>v_m (x'_m)$. Therefore, there exists a time $T_2$ such that $u(0,t;\phi_2)$ goes up across $v_m (x'_m)$ at time $T_2$. Applying the zero number argument on $[0,2L_m - x'_m]$ and reasoning as above, we have $u(\cdot,t;\phi_2)> v_m (\cdot+x'_m)$ for $x\in [0,2L_m -x'_m]$ and $t>T_2$. By Lemma~\ref{spreading}, spreading happens for $u$, a contradiction. Then we must have $\lim_{t\to \infty}\xi(t)=\infty$ for initial datum $\phi_2$.
\end{proof}

The last lemma completes the proof of Theorem~\ref{thm:main2}.

\begin{lem}
$\Sigma _1$ is a closed set of $\Sigma $ in $L^{\infty}$-topology.
\end{lem}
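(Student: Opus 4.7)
The plan is to argue by contradiction. Suppose $\phi_n \in \Sigma_1$ with $\phi_n \to \phi_0$ in $L^\infty$ and $\phi_0 \in \Sigma \setminus \Sigma_1$. We may assume that case (iii) of Theorem~\ref{thm:main2} holds, since otherwise $\Sigma_1 \in \{\emptyset, \Sigma\}$ is trivially closed. In that case, Remark~\ref{zground} together with \eqref{ground_set} shows that $\Shift_{ground}(b)$ is a nonempty, bounded and closed subset of $(0, \infty)$, hence compact; writing $z_n := \lim_{t \to \infty} \xi_n(t) \in \Shift_{ground}(b)$ and passing to a subsequence, we may assume $z_n \to z^* \in \Shift_{ground}(b)$. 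Let $z_{\max} := \max \Shift_{ground}(b) < \infty$.

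Since $\phi_0 \in \Sigma \setminus \Sigma_1$, Lemma~\ref{le:Trans} gives $\xi_0(t) \to \infty$. We may therefore choose $T$ large enough that $\xi_0(T) > z_{\max}$ and (by Lemma~\ref{lem:xiN-xi}) such that $u(\cdot, T; \phi_0)$ has a unique nondegenerate maximum at $\xi_0(T)$. Since $\phi_n \to \phi_0$ in $L^\infty$, parabolic regularity yields $u(\cdot, T; \phi_n) \to u(\cdot, T; \phi_0)$ in $C^2$ on compacts; the implicit function theorem then gives $\xi_n(T) \to \xi_0(T) > z_{\max}$, so $\xi_n(T) > z_{\max}$ for all $n$ sufficiently large.

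The desired contradiction will follow from the Key Claim: \emph{if $\phi \in \Sigma$ and $\xi_\phi(T_0) > z_{\max}$ for some $T_0$, then $\xi_\phi(t) \to \infty$ as $t \to \infty$}. Granting the Key Claim, applying it to each $\phi_n$ with our $T$ yields $\phi_n \in \Sigma \setminus \Sigma_1$ for $n$ large, contradicting $\phi_n \in \Sigma_1$. To establish the Key Claim, I would combine two ingredients for the function $w(x, t) := u(x, t; \phi) - V(x - z_{\max})$. First, $w$ satisfies a linear parabolic equation with bounded coefficient and inherits the Robin condition $w(0, t) = b w_x(0, t)$ (since $V(\cdot - z_{\max})$ is a steady state satisfying it), so every zero of $t \mapsto w(0, t)$ corresponds to a degenerate zero of $w(\cdot, t)$ at $x = 0$ and strictly decreases the finite nonnegative integer $\mathcal{Z}(w(\cdot, t))$ by Lemma~\ref{lem:zero}(2). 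Second, the negation of case (ii) of Theorem~\ref{thm:main2}, together with the fact that $z_{\max}$ is the largest element of $\Shift_{ground}(b)$, forces $b < s / \sqrt{F(s)}$ for all $s \in (0, V(-z_{\max}))$; equivalently, $V(\cdot - z)$ is a strict super-solution of \eqref{p} for every $z > z_{\max}$. Combining these two ingredients, in the spirit of the zero-number arguments used in the proofs of Lemma~\ref{sufficient and necessary}(2) and Lemma~\ref{le:Trans}, one shows that $\xi_\phi(t)$ cannot return below $z_{\max}$ at any later time, and the super-solution structure on $(z_{\max}, \infty)$ then propels $\xi_\phi(t) \to \infty$.

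The main obstacle is the rigorous verification of the Key Claim: one must carefully rule out oscillatory returns of $\xi_\phi(t)$ across $z_{\max}$, especially in the subtle case when $z_{\max}$ is a repelling equilibrium of the reduced dynamics on the ground-state manifold. In that situation, $V(\cdot - z_{\max})$ can only be approached along a codimension-one stable manifold, so a careful accounting of the sign patterns and zero locations of $w(\cdot, t)$ is required in order to prevent $\xi_\phi$ from sliding back into the basin of attraction of any smaller element of $\Shift_{ground}(b)$.
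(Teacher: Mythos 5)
The approach you take is genuinely different from the paper's, and the paper's proof should be compared: rather than comparing $u(\cdot,t;\phi)$ to a steady state $V(\cdot-z_{\max})$, the paper compares the solution with a time-shifted copy of \emph{itself}. It chooses $T_2>1$ and $L_1<L_2$ (using that $\xi_0(t)\to\infty$ and the slow-motion estimate of Lemma~\ref{o(t)?}) so that $u(\cdot,T_2;\phi_0)$ and $u(\cdot,1;\phi_0)$ cross exactly once, at a nondegenerate point in $[L_1,L_2]$; by $C^1_{\rm loc}$ continuous dependence the same holds for $\phi_n$ with $n$ large; since $\phi_n\in\Sigma_1$ the intersection must eventually disappear through $x=0$, after which $u(\cdot,1+T_3;\phi_n)<u(\cdot,T_2+T_3;\phi_n)$ everywhere, and a comparison argument (as in Lemma~\ref{sufficient and necessary}(1)) then contradicts convergence to a finitely shifted ground state.

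Your reduction to case (iii) and the identification of $z_{\max}=\max\Shift_{ground}(b)$ are correct, but the argument has a genuine gap at the Key Claim, which you yourself flag as unfinished. Two concrete problems. First, the ``super-solution propels'' mechanism is inverted. In case (iii), for $z>z_{\max}$ one has $b<V(-z)/\sqrt{F(V(-z))}$, hence $V(-z)>bV'(-z)$: so $V(\cdot-z)$ is a \emph{super}-solution of \eqref{p}. Super-solutions trap the solution from above; they do not push $\xi_\phi(t)$ to $\infty$. The objects that would help here are sub-solutions sitting ahead of the profile, and those are precisely the ground states one is trying to escape. Second, the premise ``$\xi_\phi(T_0)>z_{\max}$ at a single time'' is too weak to determine the asymptotic fate; what one actually needs to control is the \emph{sign pattern} of $u(\cdot,T_0)-V(\cdot-z_{\max})$, and the location of the unique maximum does not pin that down — in particular, for compactly supported data the far-field tail of $u(\cdot,T_0)$ is super-exponentially small while $V$ decays only like $e^{-\lambda x}$, so $u(\cdot,T_0)-V(\cdot-z_{\max})$ generically has an extra far sign change that any zero-number argument must account for. (There is also the minor technical point that the local $C^2$ convergence near $\xi_0(T)$ produces a nondegenerate critical point of $u(\cdot,T;\phi_n)$ near $\xi_0(T)$, but does not by itself rule out other local maxima and hence does not immediately identify that point with $\xi_n(T)$ as defined in \eqref{def xi}.) Without a rigorous version of the Key Claim — which I believe would require redoing it in terms of sign patterns relative to $V(\cdot-z_{\max})$ and handling the delicate boundary case $z_n\to z_{\max}$ — the proof is incomplete.
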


\begin{proof}
If $\Sigma_1$ is not closed, then we can find an initial datum $\phi_0\in \Sigma\backslash \Sigma_1$ and a
sequence $\{\phi_n\}_{n\in\mathbb{N}} \subset \Sigma _1$ such that $\lim_{n\to \infty}\|\phi_n-\phi_0\|_{L^\infty}=0$.

By Lemma~\ref{lem:u decreases far} there exists $L_1>0$ such that, for $x>L_1$, we have $u_x(x,1;\phi_0)<0$. Because $\phi_0 \notin \Sigma _1$, there exists $\xi (t) \to \infty$ such that $\lim\limits_{t\to \infty} \|u(\cdot,t;\phi_0) - V(\cdot - \xi(t))\|_{H^2([0,\infty))}=0$. Using Lemma~\ref{lem:xiN-xi}, we can find two constants $T_2 > 1$ and $L_2 > L_1$ such that $u(x,T_2;\phi_0)<\frac{1}{2}u(x,1;\phi_0)$ for $x \in [0,L_1]$ and $u_x(x,T_2;\phi_0)>0$ for $x\in [0,L_2]$. By Lemma~\ref{o(t)?} and up to increasing $T_2$ and $L_2$, we can also get $u(L_2,t+(T_2 -1);\phi_0)>\alpha > u(L_2,t;\phi_0)$ for all $t \in [0,1]$. By comparison principle on $[L_2, \infty)$ (where $\phi_0 \equiv 0$), we get that $u(x,T_2;\phi_0) > u (x,1;\phi_0)$ for $x \in [L_2,\infty)$. We conclude that $u(\cdot,T_2;\phi_0)$ and $u(\cdot,1;\phi_0)$ intersect only once, and the (non-degenerate) intersection is located in the interval~$[L_1,L_2]$.

Because $u(x,t;\phi_n)$ converges as $n\to \infty$ to $u(x,t;\phi_0)$ in $C^1_{\rm loc}$ topology with respect to
both $t$ and $x$, we get that for $n$ large enough, $u(\cdot,T_2;\phi_n)$ and $u(\cdot,1;\phi_n)$ also intersect only once. Indeed, $u(\cdot,T_2;\phi_n)$ lies below $u(\cdot,1;\phi_n)$ on $[0,L_1 ]$, $u_x (\cdot,T_2;\phi_n) > 0 > u_x (\cdot,1;\phi_n)$ on $[L_1, L_2]$ and $u(L_2,t+(T_2 -1);\phi_n) > u(L_2,t;\phi_n)$ for all $t \in [0,1]$. Reasoning as above, we infer as announced that the intersection is unique.

Lastly, since $\phi_n \in \Sigma_1$, there must exist some time $T_3$ such that $u(0,1+T_3;\phi_{n})=u(0,T_2+T_3;\phi_{n})$. Using the zero number argument, $u(\cdot,1+T_3;\phi_n )< u(\cdot , T_2 + T_3 ;\phi_{n})$ for $n$ large enough. By the comparison principle and the convergence to a shifted ground state $V(\cdot -z)$, we can get a contradiction as in the proof of Lemma~\ref{sufficient and necessary} (1).
%
\end{proof}

\section{Asymptotic Behavior of $\xi(t)$}\label{estimate}

Throughout this section we assume that \eqref{convergence H2} holds with $\xi(t)\to\infty$ as $t\to\infty$. In particular, according to Theorem~\ref{thm:main2} and Remark~\ref{zground}, the set $Z_{ground}(b)$ must be either empty or bounded. We would like to know the asymptotic behavior of $\xi(t)$, which was an open problem even in the case $b=0$~\cite{FF}, and more precisely to prove Theorem~\ref{thm:main3}.
Most calculations in this section
deal with the case $b\lambda < 1$, while the analogue results in case $b\lambda =1$ and $f$ satisfying {\bf (Fk)}
with $k=2$ will be presented directly, since the proof is similar.
From now on, we use the notation \bess (\phi,\psi):=\int_0^\infty \phi(x)\psi(x)dx,\quad \|\phi\|=\sqrt{(\phi,\phi)},
\quad \|\phi\|_\infty=\|\phi\|_{L^\infty([0,\infty))}. \eess

\subsection{The Center Manifold}

We begin by a lemma about eigenvalues of the linearized problem around the limiting ground state on the whole line.

\begin{lem}\label{le1} Let $V$ be the unique even positive solution of $V''+f(V)=0$ in $\R$ subject to $V(\infty)=0$. Set
$\cL_0 \varphi =\varphi'' + f'(V)\varphi$, and consider the eigenvalue problem
\bess
\cL_0 \varphi=\mu \varphi \hbox{ \ in \ }L^2(\BbbR).
\eess
\begin{enumerate}
\item[\rm (1)] The principal eigenvalue of $\cL_0$, denoted by $\mu_1$, is positive.
Its associated principal eigenfunction is even, and can be normalized by the
condition $\varphi^0_1>0$ and $\|\varphi^0_1\|_{L^2(\BbbR)}=1$.

\item[\rm (2)] $\mu_2=0$ is the second eigenvalue and $\varphi^0_2=V'/\|V'\|_{L^2(\BbbR)}$
is the associated eigenfunction;

\item[\rm (3)]
 The following number  $\mu_3$ is negative
\bess
  \mu_3 & := &- \inf_{\int_{\R} \varphi^2 =1, \int_{\R} \varphi \varphi^0_1 =0,
 \int_{\R} \varphi \varphi^0_2 =0 } \int_{\BbbR} [\varphi'{}^2+ f'(V) \varphi^2].
\eess
\end{enumerate}
\end{lem}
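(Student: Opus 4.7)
The plan is to view $\cL_0 = \partial_x^2 + f'(V)$ as a one-dimensional Schr\"odinger-type operator on $L^2(\R)$: because the potential $f'(V(x))$ tends to $f'(0) = -\lambda^2$ as $|x|\to\infty$, its essential spectrum is contained in $(-\infty,-\lambda^2]$ and the eigenvalues in $(-\lambda^2,\infty)$ form a simple discrete sequence governed by the Sturm oscillation theorem. I will organize everything around the explicit eigenfunction $V'$, treating (2) first, then (1), then (3).

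For (2), differentiating the ground-state ODE $V''+f(V)=0$ in $x$ yields $V'''+f'(V)V'=0$, i.e.\ $\cL_0 V'=0$. The phase-plane analysis in Lemma \ref{limitsol} together with $V$ being even with $V(0)=\theta>0$ forces $V'$ to be odd with a single zero, located at $x=0$, and the asymptotics \eqref{asmpV} give $V'\in H^2(\R)$. Hence $\varphi_2^0:=V'/\|V'\|_{L^2(\R)}$ is a well-defined eigenfunction for the eigenvalue $0$, provided $\ker\cL_0$ is one-dimensional; this last point is deferred to (3).

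For (1), I will invoke the Sturm oscillation theorem on the line---proved most cleanly by truncating to the Dirichlet eigenvalue problem on $[-R,R]$, where simplicity and node-counting are classical, and then passing to the limit $R\to\infty$ while using the exponential decay of $f'(V)+\lambda^2$ to preserve discrete eigenvalues above $-\lambda^2+\delta$. The theorem then states that the $n$-th discrete eigenvalue of $\cL_0$ counted from the top has an eigenfunction with exactly $n-1$ interior zeros and is simple. Since $V'$ has exactly one zero, it occupies the second slot, which forces a strictly larger eigenvalue $\mu_1>0$ whose eigenfunction $\varphi_1^0$ has no zeros and is therefore of one sign. Because $f'(V(x))$ is even, $\cL_0$ commutes with the reflection $x\mapsto -x$; by simplicity of $\mu_1$ its one-dimensional eigenspace is invariant under this reflection, and not having a node $\varphi_1^0$ must be even. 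I then normalize so that $\varphi_1^0 > 0$ and $\|\varphi_1^0\|_{L^2(\R)} = 1$.

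For (3), it is enough to show $\mu_2=0$ is simple, for then the Sturm ordering yields $\mu_3<\mu_2=0$ and the min-max characterization identifies the quoted variational quantity with the third eigenvalue of $\cL_0$. Simplicity follows from reduction of order: any second, linearly independent solution of $\cL_0\psi=0$ must take the form $\psi(x) = V'(x)\int_0^x (V'(y))^{-2}\,dy$, and \eqref{asmpV} gives $(V'(y))^{-2}\sim e^{2\lambda|y|}/(A\lambda)^2$, so $\psi$ grows at least like $e^{\lambda|y|}$ at infinity and fails to be in $L^2(\R)$; hence $\ker\cL_0 = \mathrm{span}\{V'\}$. The principal technical subtlety throughout is making the Sturm oscillation and simplicity picture rigorous on the unbounded interval---one must check that discrete eigenvalues above the essential spectrum are correctly captured in the limit $R\to\infty$ and that nodes do not escape to infinity---and it is precisely the exponential convergence $f'(V)+\lambda^2 = O(e^{-\lambda|x|})$, which already supplied $V'\in L^2(\R)$, that makes this transfer go through.
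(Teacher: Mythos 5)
Your proof is correct and follows essentially the same route as the paper: both rest on $\cL_0 V'=0$, the Sturm oscillation argument placing $V'$ (one sign change) in the second spectral slot so that $\mu_1>0$ with a positive even principal eigenfunction, and the location of the essential spectrum at or below $f'(0)=-\lambda^2<0$ to handle $\mu_3$. The only caveat is in part (3): the variational quantity $\mu_3$ need not be an actual third eigenvalue (the paper explicitly notes the alternative that it equals $f'(0)$ when the remaining spectrum is purely essential), but since you already located the essential spectrum in $(-\infty,-\lambda^2]$, the min-max value is negative in either case and your conclusion stands.
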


\begin{proof}
Differentiating $V''+f(V)=0$ we have $\cL_0 V'=0$. Hence,
$(0,V')$ is an eigenpair of the operator $\cL_0$.  As $V'$ changes sign exactly once and $f'(0)<0$,
$0$ is the second eigenvalue, so the principal eigenvalue $\mu_1$ is positive, and the
principal eigenfunction $\varphi^0_1$ can be taken as positive and even (as $V$ is even).
In addition, since $f'(0)<0$, we have the alternative that either there is a third eigenvalue
$\mu_3\in (f'(0),0)$ or the remaining spectrum lies in $(-\infty, f'(0)]$ in which case $\mu_3= f'(0)$.
\end{proof}

Let us now stress that, considering the reaction-diffusion equation on the whole real line, the set of the shifted $V$ is a center manifold of any of its element. The fact that \eqref{convergence H2} holds with $\xi(t)\to\infty$ as $t\to\infty$ roughly means that the solution is moving along this manifold.

In order to describe this motion, we first introduce an approximated center manifold such that any of its elements satisfy the Robin boundary condition. Let us first define
\bess &  B(\xi) := \frac{ V(\xi)+b V'(\xi)}{1+b\lambda },\quad
  \Phi(x,\xi) := V(\xi-x)-  B(\xi) e^{-\lambda x},
\\ & R(x,\xi) := \Phi_{xx}(x,\xi)+f(\Phi(x,\xi)),
\qquad \cL^\xi \varphi := \varphi_{xx} +f'(\Phi(\cdot,\xi)) \varphi.\eess
By Theorem~\ref{thm:main2} and Remark~\ref{zground}, $B(\xi)>0$ when $\xi>0$ is large enough.
We call
$$\cM := \{ \Phi(\cdot, \xi)\;|\; \xi\geqslant 0\}$$
the {\bf approximated center manifold} and, as announced, for each $\varphi \in \cM$, $\varphi=\Phi(\cdot,\xi)$ satisfies the boundary condition
\bess
  \varphi -b \varphi_x \Big|_{x=0} = V(\xi)-B(\xi)-b[- V'(\xi)+\lambda B(\xi) ] =0.
\eess
Moreover, when \eqref{convergence H2} holds with $\xi(t)\to\infty$ as $t\to\infty$
we have
\bess
  \lim_{t\to\infty} \|u(\cdot,t)-\Phi(\cdot,\xi(t))\|_{H^2([0,\infty))}=0.
\eess
 We shall follow the work of Carr and Pego~\cite{CP},  Fusco and Hale~\cite{FuscoHale},
 Alikakos, Bates and Fusco~\cite{ABF}, Alikakos and Fusco~\cite{AF}, and Chen et. al.~\cite{Chen1,Chen2} to study the evolution of  $\xi(t)$. The main idea will be to prove that the behavior of $\xi (t)$ is mostly dictated by the gap between the approximated center manifold and the shited ground states, and in particular by the remainder~$R$.

\subsubsection{The Remainder $R$} For notational simplicity, we write
 $\Phi=\Phi(x,\xi),\ R=R(x,\xi),$ $V=V(\xi-x)$, and $W=B(\xi) e^{- \lambda x}$.
  Then  $\Phi=V-W$, $V_{xx}+f(V)=0$, and $W_{xx}=\lambda^2 W$. Using $\lambda^2= -f'(0)$
  we obtain, when $x\geqslant0$ and $\xi\geqslant 0$,
\bess  R &=& V_{xx}-W_{xx} + f(V-W)= - f(V) + f'(0)W+f(V-W)
\\ &=& W \int_0^1 [f'(0)-f'(V-s W)]\,ds
 \\ &=&  O(1) [V  + W] W= O(1)[ e^{-\lambda |x-\xi|} + e^{-\lambda (\xi+x)}] e^{-\lambda(\xi+x)}
 = O(1) e^{-2\lambda\max\{\xi,x\}},
 \\ R_\xi &=& W_\xi \int_0^1 [f'(0)-f'(V-s W)]\,ds -W \int_0^1 f''(V-sW)[V_\xi-sW_\xi]ds
 \\ &=& O(1)[| W_\xi|(V+W)+W (|V_\xi|+|W_\xi|)]= O(1) e^{-2\lambda\max\{\xi,x\}}\;. \eess
Hence,
\bess
\|R(\cdot,\xi)\| = O(1) \sqrt{1+\xi} e^{-2\lambda \xi}, \quad
\|R_\xi (\cdot,\xi)\| =O(1)\sqrt{1+\xi} e^{-2\lambda \xi}.
\eess
Also, for the energy $\bE$ defined in (\ref{E}), we have, since $\Phi(0,\xi)=b\Phi_x(0,\xi)$ and $\Phi(\infty,\xi)=0$,
\bess \frac{d}{d\xi} \bE[\Phi(\cdot,\xi)]
&=& 2\int_0^\infty[ -\Phi_{xx} - f(\Phi)]\Phi_{\xi}\,dx = -2(R,\Phi_\xi),
 \\ ( R,\Phi_\xi) &=& ( R, \Phi_\xi+\Phi_x) -\int_0^\infty \Big[ \Phi_{xx} + f(\Phi)\Big]\Phi_x \,dx
\\ &=& [\lambda B(\xi)-B'(\xi)] \int_0^\infty R(x,\xi)e^{-\lambda x}dx
  +\frac12 \Phi_x^2(0,\xi)-\frac12 F(\Phi(0,\xi))
  \\ &=& O(e^{-3\lambda\xi}) +\frac12\Phi_x^2 (0,\xi) -\frac{\lambda^2}2 \Phi^2(0,\xi)+ O(\Phi^3(0,\xi))
   \\ &=& \frac 12\Big\{ \Big(\lambda B(\xi)-V'(\xi)\Big)^2-{\lambda^2} \Big(V(\xi)-B(\xi)\Big)^2\Big\}+O(e^{-3\lambda\xi})
     \\ &=& \lambda B(\xi)[\lambda V(\xi)-V'(\xi)]+O(e^{-3\lambda \xi})
      \\ &=& \frac{2\lambda^2(1-b\lambda ) A^2}{1+b\lambda} e^{-2\lambda\xi} + O(1) e^{-3\lambda \xi}
\eess
by \eqref{asmpV}.
 We summarize our calculation as follows:

\begin{lem} For  $\xi\geqslant 0$ and $b\geqslant 0$, $B(\xi)=\frac{1-b\lambda}{1+b\lambda} Ae^{-\lambda\xi}+O(e^{-2\lambda\xi})$ and
\bess &
\|R(\cdot,\xi)\|_\infty+\|R_\xi(\cdot,\xi)\|_\infty = O(1) e^{-2\lambda \xi},\quad \|R(\cdot,\xi)\|+\|R_\xi(\cdot,\xi)\|=O(1)
\sqrt{1+\xi}e^{-2\lambda\xi}, \\
&\displaystyle  -\frac12\frac{d}{d\xi} \bE[\Phi(\cdot,\xi)]= ( R(\cdot,\xi),\Phi_\xi(\cdot,\xi))
= \frac{2\lambda^2(1-b\lambda ) A^2}{1+b\lambda} e^{-2\lambda\xi} + O(1) e^{-3\lambda\xi}.
\eess
When $b\lambda =1$ and $f\in C^3$, the next order  expansions are
$B(\xi)= \frac{f''(0)A^2}{12 \lambda} e^{-2\lambda \xi} +O(e^{-3\lambda \xi})$ and
\bess & \|R(\cdot,\xi)\|_\infty+\|R_\xi(\cdot,\xi)\|_\infty = O(1) e^{-3\lambda\xi},\quad
 \|R(\cdot,\xi)\|+\|R_\xi(\cdot,\xi)\|=O(1)
\sqrt{1+\xi}e^{-3\lambda\xi},
\\  &  ( R(\cdot,\xi),\Phi_\xi(\cdot,\xi))
=  \frac{f''(0) A^3}{6} e^{-3\lambda\xi} + O(1) e^{-4\lambda\xi}.
\eess\end{lem}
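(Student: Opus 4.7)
The four estimates have already been derived essentially verbatim in the computation that precedes the statement, so the proof consists in bundling them up, with only the critical case $b\lambda=1$ requiring additional work. My plan is to organise the argument in three parts: expansion of $B(\xi)$, estimates on $R$ and $R_\xi$, and the energy identity.

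First, the expansion of $B(\xi)$ is a direct substitution: plug the asymptotics of $V(\xi)$ and $V'(\xi)$ from~\eqref{asmpV} into $B(\xi)=\frac{V(\xi)+b V'(\xi)}{1+b\lambda}$, from which the leading coefficient $\frac{1-b\lambda}{1+b\lambda}A$ and the $O(e^{-2\lambda\xi})$ remainder drop out. Second, for the pointwise and $L^2$ bounds on $R$ and $R_\xi$, I would use the identity $R = W\int_0^1[f'(0)-f'(V-sW)]\,ds$ (which follows from $V_{xx}=-f(V)$, $W_{xx}=\lambda^2 W$ and $\lambda^2=-f'(0)$) together with the uniform pointwise bounds $V=O(e^{-\lambda|\xi-x|})$ and $W=O(e^{-\lambda(\xi+x)})$ on $[0,\infty)$, to conclude $|R(x,\xi)|=O(e^{-2\lambda\max\{\xi,x\}})$. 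Squaring and integrating in $x$ then produces the factor $\sqrt{1+\xi}$, the only non-negligible contribution coming from the region of width $\sim\xi$ on which $R$ is near its maximum $e^{-2\lambda\xi}$; the bound on $R_\xi$ is obtained identically after differentiating under the integral.

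For the energy identity the plan is a single integration by parts. Differentiating $\bE[\Phi(\cdot,\xi)]$ in $\xi$ and using the boundary relation $\Phi(0,\xi)=b\Phi_x(0,\xi)$ to kill the boundary term produces $\frac{d}{d\xi}\bE = -2(R,\Phi_\xi)$. To evaluate $(R,\Phi_\xi)$ at leading order, I split $\Phi_\xi = (\Phi_\xi+\Phi_x)-\Phi_x$: the piece $-\Phi_x$ paired with $R=\Phi_{xx}+f(\Phi)$ integrates to an exact derivative, whose boundary contribution is $\tfrac{1}{2}\Phi_x^2(0,\xi)-\tfrac{1}{2}F(\Phi(0,\xi))$, while the piece $\Phi_\xi+\Phi_x$ equals $[\lambda B(\xi)-B'(\xi)]e^{-\lambda x}$ and contributes only $O(e^{-3\lambda\xi})$. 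Substituting the expansions of $V(\xi)$, $V'(\xi)$, $B(\xi)$ and $F(s)=\lambda^2 s^2+O(s^3)$ into the boundary terms and simplifying then yields the announced leading order $\frac{2\lambda^2(1-b\lambda)A^2}{1+b\lambda}\,e^{-2\lambda\xi}$.

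For the critical case $b\lambda=1$, where the leading coefficient of $B(\xi)$ vanishes, I would push each expansion one order further using $V'{}^2=F(V)$ with $F$ expanded to cubic order; this is exactly where \textbf{(Fk)} with $k=2$ enters, and it produces the refined formula $B(\xi)=\frac{f''(0)A^2}{12\lambda}e^{-2\lambda\xi}+O(e^{-3\lambda\xi})$ together with the improved factor $e^{-\lambda\xi}$ in the bounds for $R$ and $R_\xi$. The main obstacle I anticipate lies here: in the energy identity the term $\lambda B(\xi)[\lambda V(\xi)-V'(\xi)]$ now vanishes at order $e^{-2\lambda\xi}$, so one must expand $F(\Phi(0,\xi))$ to cubic order and carefully verify that several \emph{a priori} comparable contributions combine to leave precisely the clean coefficient $\frac{f''(0)A^3}{6}\,e^{-3\lambda\xi}$, with everything else absorbed into the $O(e^{-4\lambda\xi})$ remainder.
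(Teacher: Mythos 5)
Your proposal follows exactly the route the paper takes (the lemma there is literally a summary of the computation displayed just before it): the same integral identity $R=W\int_0^1[f'(0)-f'(V-sW)]\,ds$ for the pointwise and $L^2$ bounds, the same decomposition $\Phi_\xi=(\Phi_\xi+\Phi_x)-\Phi_x$ with the boundary terms $\tfrac12\Phi_x^2(0,\xi)-\tfrac12F(\Phi(0,\xi))$ for the energy identity, and the same one-order-further expansion via {\bf (Fk)} with $k=2$ in the critical case $b\lambda=1$. The only part you leave as a plan rather than execute — the cubic-order bookkeeping giving the coefficient $\tfrac{f''(0)A^3}{6}$ — is likewise not written out in the paper, so the proposal is correct and essentially identical in approach.
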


\subsubsection{The Approximated Eigenvalue Problem}
Recall that
 \bess \cL^\xi \varphi =\varphi_{xx} + f'(\Phi(\cdot,\xi))\,\varphi\eess
and consider the eigenvalue problem
\bess
  \cL^\xi \varphi=\mu\varphi,  \quad \varphi \in
 \{\eta \in H^2([0,\infty))\;|\; \eta(0)=b \eta'(0),\ \eta(\infty)=0\}.
 \eess
When $b>0$, it is associated with the bilinear form
\bess
  \langle \varphi,\psi\rangle^\xi &:=& \int_0^\infty \Big( \varphi_x\psi_x
 - f'(\Phi(\cdot,\xi)) \varphi\psi\Big)dx +\frac{1}{b} \varphi(0)\psi(0)
\\ &=&-(\varphi,\cL^\xi\psi)+\frac{\varphi(0)}{b} [\psi(0)-b\psi_x(0)]
  =-(\cL^\xi\varphi,\psi) +\frac{\psi(0)}{b}[\varphi(0)-b\varphi_x(0)]
\eess
and, when $b=0$, it is associated with the bilinear form
\bess
 \langle \varphi,\psi\rangle^\xi :=  \int_0^\infty \Big( \varphi_x\psi_x
 - f'(\Phi(\cdot,\xi)) \varphi\psi\Big)dx = -(\varphi,\cL^\xi\psi)  =-(\cL^\xi\varphi,\psi) .
\eess

\begin{lem} For each $\xi>0$,  there exist two eigenpairs $(\nu_1(\xi),\varphi_1(\cdot,\xi))$
and $(\nu_2(\xi),\varphi_2(\cdot,\xi))$ such that $\cL^\xi\varphi_i(\cdot,\xi)=
\nu_i(\xi)\varphi_i(\cdot,\xi)$ $(i=1,2)$ and
\bess
\nu_1(\xi):=-\min_{\|\varphi\|=1} \langle \varphi,\varphi\rangle^\xi= -\langle\varphi_1,
\varphi_1\rangle^\xi=(\cL^\xi \varphi_1,\varphi_1),\quad \|\varphi_1\|=1,\ \varphi_1>0, \\
\nu_2(\xi):=-\min_{\|\varphi\|=1,\varphi\perp\varphi_1} \langle \varphi,\varphi\rangle^\xi= -
\langle\varphi_2,\varphi_2\rangle^\xi =(\cL^\xi \varphi_2,\varphi_2),\quad  \|\varphi_2\|=1,\  \varphi_2'(0)> 0.
\eess
 We define
\bess
  \nu_3(\xi):= -\inf_{\|\varphi\|=1,\varphi\perp\varphi_1,\varphi\perp\varphi_2} \langle \varphi,\varphi\rangle^\xi.
\eess
Then, with $(\mu_1,\varphi^0_1(x)), (\mu_2,\varphi^0_2(x))$ and $\mu_3$ as in Lemma \ref{le1}, we have
\bess
  & \lim\limits_{\xi\to\infty} \nu_i(\xi) =\mu_i \ \ (i=1,2,3),  \\
   & \lim\limits_{\xi\to\infty}\| \varphi_i(\cdot,\xi) -\varphi^0_i(\cdot-\xi)\| =0\ \ (i=1,2).
\eess
\end{lem}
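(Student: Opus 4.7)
The approach is to treat the spectral problem on $[0,\infty)$ with Robin boundary condition as a perturbation of the full-line problem of Lemma~\ref{le1}: after the change of variable $y := x-\xi$, the Robin boundary at $x=0$ recedes to $y=-\infty$, while the potential $f'(\Phi(y+\xi,\xi))$ tends to $f'(V(y))$ in $L^\infty$ thanks to the exponential decay estimates on $B(\xi)$ and on $\Phi-V(\cdot-\xi)$. In the limit, the eigenvalue problem for $\cL^\xi$ formally becomes the one for $\cL_0$ on all of $\R$.

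I would first set up the variational framework. Since $f'(\Phi(x,\xi))\to f'(0)=-\lambda^2$ as $x\to\infty$, the self-adjoint operator $\cL^\xi$ (with domain incorporating the Robin condition) differs from $\partial_{xx}-\lambda^2$ by multiplication by a bounded function tending to $0$ at infinity, which is a relatively compact perturbation. By Weyl's theorem its essential spectrum is $(-\infty,-\lambda^2]$, and any spectrum in $(-\lambda^2,\infty)$ consists of isolated eigenvalues of finite multiplicity. A Krein-Rutman/Rayleigh-quotient argument produces $\nu_1(\xi)$ as a simple principal eigenvalue with a positive eigenfunction $\varphi_1(\cdot,\xi)$, and $\nu_2(\xi)$ as the second eigenvalue under the orthogonality constraint; for $\xi$ sufficiently large both lie above $-\lambda^2$ by the asymptotic matching established below.

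For the convergence $\nu_i(\xi)\to\mu_i$ and $\varphi_i(\cdot,\xi)\to\varphi_i^0(\cdot-\xi)$ ($i=1,2$), I match upper and lower bounds. For $\liminf_{\xi\to\infty}\nu_i(\xi)\geqslant\mu_i$, I plug into the min-max the test function $\psi_i^\xi(x):=\varphi_i^0(x-\xi)+\alpha_i(\xi)e^{-\lambda x}$ with $\alpha_i(\xi)=O(e^{-\lambda\xi})$ chosen to enforce $\psi_i^\xi(0)=b(\psi_i^\xi)'(0)$; since $\varphi_i^0$ decays like $e^{-\lambda|y|}$, such a choice is possible, and the Rayleigh quotient of $\psi_i^\xi$ for $(\cL^\xi\cdot,\cdot)$ tends to $\mu_i$. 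For $\limsup_{\xi\to\infty}\nu_i(\xi)\leqslant\mu_i$, I set $\tilde\varphi_i(y,\xi):=\varphi_i(y+\xi,\xi)$ on $[-\xi,\infty)$, normalized in $L^2$ and satisfying $\tilde\varphi_i''+f'(\Phi(y+\xi,\xi))\tilde\varphi_i=\nu_i(\xi)\tilde\varphi_i$. Uniform $H^2_{\rm loc}$ estimates together with a uniform Agmon-type exponential decay bound (using that $\Phi(y+\xi,\xi)$ is exponentially small for $|y|$ large uniformly in $\xi$, hence $-f'(\Phi)-\nu_i(\xi)\geqslant\lambda^2/2$ outside a $\xi$-uniform bounded neighborhood of $y=0$) prevent mass loss, so a diagonal extraction yields a strong $L^2(\R)$ limit $\tilde\varphi_i^\infty$ that is a normalized eigenfunction of $\cL_0$ at eigenvalue $\mu_i^\infty:=\lim_n\nu_i(\xi_n)$. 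Positivity forces $\tilde\varphi_1^\infty=\varphi_1^0$ and $\mu_1^\infty=\mu_1$; orthogonality to $\varphi_1^0$, inherited in the limit from orthogonality to $\tilde\varphi_1$ once the $L^2$ convergence of $\tilde\varphi_1$ is known, forces $\tilde\varphi_2^\infty$ to be the second eigenfunction and $\mu_2^\infty=\mu_2=0$. Uniqueness of the subsequential limit upgrades this to full convergence.

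The case of $\nu_3(\xi)$ proceeds by the same scheme with infima in place of attained eigenvalues: the upper bound uses a shifted $\varphi_3^0$ if $\mu_3>-\lambda^2$ (so that the infimum is attained on $\R$), or a Weyl sequence for the essential spectrum if $\mu_3=-\lambda^2$; the lower bound combines the min-max with the $L^2$ convergence of $\varphi_{1,2}(\cdot,\xi)$ already established, which ensures that the orthogonality constraints pass correctly to the limit. The main obstacle I anticipate is the no-concentration-at-infinity step underlying the $\limsup$ argument: uniform-in-$\xi$ exponential decay of the shifted eigenfunctions is needed so that the subsequential $L^2$ limit is nontrivial, and this decay must be uniform both near the receding left boundary $y=-\xi$ and at $y=+\infty$. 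It should follow by combining the a priori bound $\nu_i(\xi)\leqslant\mu_i+o(1)$ with the uniform smallness of $\Phi(y+\xi,\xi)$ for $|y|$ large, which keeps the Agmon decay rate $\sqrt{\lambda^2+\nu_i(\xi)}$ bounded away from $0$ uniformly in $\xi$.
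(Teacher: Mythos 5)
The paper states this lemma without any proof, so there is no argument of the authors' to compare against; judged on its own, your proposal follows the route one would naturally take here (Weyl's theorem placing the essential spectrum of $\cL^\xi$ at $(-\infty,-\lambda^2]$, min--max for the discrete eigenvalues, a lower bound on $\nu_i(\xi)$ from shifted eigenfunctions of $\cL_0$ corrected by a multiple of $e^{-\lambda x}$ to satisfy the Robin condition, and an upper bound from compactness of the shifted eigenfunctions with a uniform Agmon decay preventing loss of mass), and the overall architecture is sound, including the correct ordering $i=1\to2\to3$ needed so that the orthogonality constraints pass to the limit.

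Two points need repair before the argument closes. First, the decay estimate underlying the no-concentration step is stated with the wrong signs: a solution of $\varphi''=(\nu_i(\xi)-f'(\Phi))\varphi$ decays exponentially in the far field precisely when $\nu_i(\xi)-f'(\Phi)\geqslant c>0$ there, i.e.\ $\nu_i(\xi)+\lambda^2\geqslant c$ where $\Phi$ is uniformly small --- not ``$-f'(\Phi)-\nu_i(\xi)\geqslant\lambda^2/2$'' --- and what keeps the rate $\sqrt{\lambda^2+\nu_i(\xi)}$ bounded away from $0$ is the \emph{lower} bound $\nu_i(\xi)\geqslant\mu_i-o(1)>-\lambda^2+c$ supplied by your own test-function step (recall $\mu_1>0$ and $\mu_2=0$), not the upper bound $\nu_i(\xi)\leqslant\mu_i+o(1)$ you invoke in the last sentence. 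This is fixable because the lower bound is established first, but as written the decisive step does not close. Second, your construction only guarantees that the minima defining $\nu_1(\xi)$ and $\nu_2(\xi)$ lie strictly above $-\lambda^2$, hence are attained by genuine eigenpairs, for $\xi$ sufficiently large, whereas the lemma asserts existence for each $\xi>0$; this is harmless for the paper (only large $\xi$ is used in Section~6), but it should be acknowledged or proved. A smaller point: for the lower bound on $\nu_2(\xi)$ the test function $\varphi_2^0(\cdot-\xi)+\alpha_2e^{-\lambda x}$ must be projected orthogonally to $\varphi_1(\cdot,\xi)$ before insertion into the constrained minimum, which again requires completing the $i=1$ case (eigenfunction convergence included) beforehand.
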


\subsection{Flow Along the Approximated Center Manifold}
\subsubsection{The Tubular Neighborhood}
 Given a function $u\in L^2([0,\infty))$,  let
\bess
 d(u,\cM):=\min_{\varphi\in \cM}\|u-\varphi\|^2 , \quad d(\xi,u):= \|u(\cdot)-\Phi(\cdot,\xi)\|^2.
\eess
We can calculate
\bess  \frac{d}{d\xi} d(\xi,u) &=& 2(\Phi_\xi, \Phi-u),
\\
\frac{d^2}{d\xi^2}d(\xi,u) &=& 2\|\Phi_\xi (x,\xi)\|^2 -2(\Phi_{\xi\xi},u-\Phi),
\\ \|\Phi_\xi\|^2&=& \int_0^\infty \Big(V'(\xi-x)-B'(\xi)e^{-\lambda x}\Big)^2 dx
 = 2 \int_0^\theta \sqrt{F(s)}ds +O(1)e^{-2\lambda\xi}(1+\xi).
\eess

\begin{lem}\label{lemma 6.4}
There exist two positive constants $\xi_0$ and $\delta_0$ such that if $ d(u,\cM)\leqslant \delta_0 < \inf_{\xi \leqslant \xi_0 } d(\xi,u)$,
then there exists a unique $\xi >\xi_0 $ such that
\bess d(\xi,u) = d(u,\cM), \qquad d_\xi(\xi,u)=0, \qquad d_{\xi\xi}(\xi,u)>0. \eess
\end{lem}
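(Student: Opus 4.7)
My plan is to prove this as a standard tubular neighborhood statement for the smooth curve $\cM\subset L^2([0,\infty))$, combining the derivative formulas for $d(\xi,u)$ displayed just above the lemma with the asymptotic expansion of $\|\Phi_\xi\|$. The key idea is that for large $\xi$ the parametrization $\xi\mapsto \Phi(\cdot,\xi)$ has uniformly positive ``tangential speed'', which forces strict convexity of $\xi\mapsto d(\xi,u)$ on the sublevel set $\{d(\cdot,u)\leqslant \delta_0\}$ and, eventually, a unique minimizer.

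First, from
\[
\|\Phi_\xi\|^2 = 2\int_0^\theta \sqrt{F(s)}\,ds + O\!\big((1+\xi)e^{-2\lambda \xi}\big)
\]
together with the analogous uniform bound $\|\Phi_{\xi\xi}\|\leqslant C$ obtained by differentiating the explicit expression for $\Phi_\xi$ once more in $\xi$ and invoking \eqref{asmpV}, I would fix $\xi_0$ so large that $\|\Phi_\xi(\cdot,\xi)\|^2\geqslant c_0:=\int_0^\theta \sqrt{F(s)}\,ds$ and $\|\Phi_{\xi\xi}(\cdot,\xi)\|\leqslant C$ for every $\xi\geqslant \xi_0$. Cauchy--Schwarz applied to the formula for $d_{\xi\xi}$ then gives, for every such $\xi$ and any $u$ with $d(\xi,u)\leqslant \delta_0$,
\[
d_{\xi\xi}(\xi,u) = 2\|\Phi_\xi\|^2 - 2(\Phi_{\xi\xi},u-\Phi) \geqslant 2c_0 - 2C\sqrt{\delta_0}.
\]
Choosing $\delta_0<(c_0/C)^2$ makes this strictly positive, so $d(\cdot,u)$ is strictly convex on every connected component of $\{\xi\geqslant \xi_0 : d(\xi,u)\leqslant \delta_0\}$.

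For existence, the stated hypothesis ensures that $S:=\{\xi\geqslant 0 : d(\xi,u)\leqslant \delta_0\}$ is nonempty and entirely contained in $(\xi_0,\infty)$. To get that $S$ is also bounded above, I would use that, as $\xi\to\infty$, $\Phi(\cdot,\xi)$ converges weakly to $0$ in $L^2([0,\infty))$ while $\|\Phi(\cdot,\xi)\|\to \|V\|_{L^2(\R)}>0$, so that $d(\xi,u)\to \|u\|^2+\|V\|_{L^2(\R)}^2$; further requiring $\delta_0<\|V\|_{L^2(\R)}^2/2$ then makes $S$ compact. The continuous function $d(\cdot,u)$ attains its infimum on $S$ at some $\xi^*\in (\xi_0,\infty)$, at which $d(\xi^*,u)=d(u,\cM)$, $d_\xi(\xi^*,u)=0$ and $d_{\xi\xi}(\xi^*,u)>0$ by the previous step.

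The main obstacle is uniqueness, since a priori $S$ could split into several connected components, each harbouring a local minimum with the same minimal value. I would rule this out by one last shrinking of $\delta_0$ based on the fact that $\xi\mapsto \Phi(\cdot,\xi)$ is an embedding of $[\xi_0,\infty)$ into $L^2$ with uniformly positive ``reach''. Concretely, the lower bound $\|\Phi_\xi\|\geqslant \sqrt{c_0}$, the uniform bound on $\|\Phi_{\xi\xi}\|$, and the weak convergence $\Phi(\cdot,\xi)\rightharpoonup 0$ together produce a constant $\eta_0>0$ with $\|\Phi(\cdot,\xi_1)-\Phi(\cdot,\xi_2)\|^2\geqslant \eta_0$ whenever $\xi_1,\xi_2\geqslant \xi_0$ and $|\xi_1-\xi_2|\geqslant 1$, while a Taylor expansion in $\xi$ (after enlarging $\xi_0$ if needed) yields the local bi-Lipschitz estimate $c_0|\xi_1-\xi_2|^2\leqslant \|\Phi(\cdot,\xi_1)-\Phi(\cdot,\xi_2)\|^2$ for $|\xi_1-\xi_2|\leqslant 1$. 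Taking $4\delta_0<\eta_0$ forces any two $\xi_1,\xi_2\in S$ to satisfy $|\xi_1-\xi_2|<1$, so $S$ is contained in a single short interval on which $d(\cdot,u)$ is strictly convex; the uniqueness of the critical point with the three listed properties is then an immediate consequence of strict convexity.
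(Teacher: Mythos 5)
Your overall strategy --- strict convexity of $\xi\mapsto d(\xi,u)$ on a low sublevel set combined with an injectivity/``positive reach'' property of the embedding $\xi\mapsto\Phi(\cdot,\xi)$ --- is a legitimate alternative to the paper's argument, which instead exploits the monotonicity of $\xi\mapsto\|\Phi(\cdot,\xi)-\Phi(\cdot,\xi_1)\|$ on unit intervals (a consequence of $|B(\xi)|+|B'(\xi)|\to 0$). Your existence step (via the weak convergence $\Phi(\cdot,\xi)\rightharpoonup 0$ with $\|\Phi(\cdot,\xi)\|\to\|V\|_{L^2(\R)}$) and the convexity estimate $d_{\xi\xi}\geqslant 2c_0-2C\sqrt{\delta_0}$ via Cauchy--Schwarz are fine.

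However, the last step of your uniqueness argument has a genuine gap. You establish $d_{\xi\xi}(\xi,u)>0$ only at those $\xi$ for which $d(\xi,u)\leqslant\delta_0$, i.e.\ on $S$ itself; but to conclude from ``$S$ lies in an interval of length $<1$'' that the minimizer is unique, you need strict convexity on the \emph{convex hull} of $S$. Nothing you have proved excludes the scenario in which $S$ has two connected components, each containing a global minimizer, with $d(\cdot,u)$ rising above $\delta_0$ in between --- precisely where your convexity estimate is silent. The repair is the step the paper handles explicitly: show that $d(\xi,u)$ stays below a fixed multiple $K\delta_0$ on the whole segment joining two putative minimizers (the paper gets the bound $3\delta_0$ from the monotone arc-distance; in your setup, your local lower bound $c_0|\xi_1-\xi|^2\leqslant\|\Phi(\cdot,\xi_1)-\Phi(\cdot,\xi)\|^2$ already shows that $S$ has diameter $O(\sqrt{\delta_0})$, whence $d\leqslant K\delta_0$ on the hull by the triangle inequality and $\|\Phi_\xi\|\leqslant C'$), and then choose $\delta_0$ so that the convexity estimate holds under the weaker hypothesis $d(\xi,u)\leqslant K\delta_0$. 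A secondary, more minor point: the uniform separation $\|\Phi(\cdot,\xi_1)-\Phi(\cdot,\xi_2)\|^2\geqslant\eta_0$ for $|\xi_1-\xi_2|\geqslant 1$ is true, but it does not follow merely from the three facts you cite; it requires, e.g., the decay of $V$ (so that the autocorrelation $\tau\mapsto\int_\R V(y)V(y+\tau)\,dy$ is bounded away from $\|V\|_{L^2(\R)}^2$ for $|\tau|\geqslant 1$) or the paper's monotonicity of the arc distance.
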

\begin{proof}
Let $\xi_0$ be large enough so that, for any $\xi_1 > \xi_0$ and $\xi_1 < \xi \leqslant  \xi_1 +1$,
$$\frac{d}{d \xi} \| \Phi (\cdot ,\xi) - \Phi (\cdot , \xi_1) \| >0.$$
Such a $\xi_0$ exists because $|B (\xi)| + | B' (\xi)| \to 0$ as $\xi \to +\infty$.

Note also that $\inf_{\xi \geqslant 0} \| \Phi_\xi \|^2 >0$, so there exists $\delta_0$ such that,
for any $u \in L^2 ([0,\infty))$ and $\xi \geqslant 0$, $d(\xi,u) \leqslant 3\delta_0$ implies
$\frac{d^2}{d \xi^2} d(\xi,u) >0$. Moreover,  we choose $\delta_0$ is so small that
$\| \Phi (\cdot,\xi_1) - \Phi (\cdot,\xi)\|^2 \leqslant 2\delta_0$ implies $|\xi -\xi_1|<1$.
Now, given $u \in L^2 ([0,\infty))$ such that $d (u, \cM) \leqslant \delta_0 < \inf_{\xi \leqslant \xi_0 } d(\xi,u)$, proceed by contradiction and assume that there exists $\xi_0 < \xi_1 < \xi_2$ with $$d(u,\cM)=d( \xi_1,u)=d(\xi_2,u).$$
Then $\| \Phi (\cdot,\xi_1) - \Phi (\cdot,\xi_2)\|^2 \leqslant 2\delta_0$, and, from our choice of
$\delta_0$ and $\xi_0$, we have $\xi_2 < \xi_1+1$ and $\| \Phi (\cdot,\xi_1) - \Phi (\cdot,\xi)\|^2 \leqslant 2\delta_0$ for all $\xi \in (\xi_1 , \xi_2]$. It follows that $d(\xi,u) \leqslant 3\delta_0$ for any $\xi \in [\xi_1 ,\xi_2]$,  and $d(\xi,u)$ is strictly convex in the same interval, which immediately gives a contradiction. The rest of the lemma easily follows.
\end{proof}

\subsubsection{The Slow Motion Along the Manifold}

Let $u$ be a solution of  \eqref{p} satisfying \eqref{convergence H2} with $\xi(t)\to\infty$ as $t\to\infty$. From the lemma above, we can decompose it for large $t$ as
 \bess   u(x,t)= \Phi(x, y(t)) + v(x,t),\eess
 where $y(t)$ is the unique point such that
\bes\label{def-y(t)}
   \|u(\cdot,t)-\Phi(\cdot,y(t))\|^2= d(u(\cdot,t), \cM).
\ees
Then  $\|v\|\to 0$ and $y(t)-\xi(t)\to 0$ as $t\to \infty$. In particular, \eqref{convergence H2} also holds with $\xi (t)$ replaced by $y(t)$, so that the proof of Theorem~\ref{thm:main3} reduces to the study of the asymptotic behavior of~$y(t)$.

In the sequel, $\Phi(\cdot,y(t))$ is simply written as $\Phi$. Then by Lemma \ref{lemma 6.4}, we have
 $$v=u-\Phi,\quad  (v,\Phi_\xi)=0,\quad \|\Phi_\xi\|^2-(v,\Phi_{\xi\xi})>0. $$
In addition, the differential equation in \eqref{p} can be written as
\bes\label{basiceq}
    \dot y\; \Phi_\xi + v_t =R+ \cL^y v +N(\Phi,v)
\ees
where
\bess
   R=\Phi_{xx} +f(\Phi),\qquad \cL^y v=v_{xx}+f'(\Phi)v, \qquad N(\Phi,v)=  f(\Phi+v) - f(\Phi) - f'(\Phi)v .
\eess
Taking the inner product of (\ref{basiceq}) with $\Phi_{\xi}(\cdot,y(t))$ and using
$$(v_t,\Phi_\xi)=(v,\Phi_\xi)_t-(v,\Phi_{\xi\xi}) \dot y=-(v,\Phi_{\xi\xi})\dot y ,$$
we obtain
 \bess \Big(\| \Phi_\xi\|^2-(v,\Phi_{\xi\xi})\Big) \dot y = ( R,\Phi_\xi)
 +( \cL^y v,\Phi_\xi)+( N,\Phi_\xi).
 \eess
 Also, using the boundary condition  $\Phi-b \Phi_x|_{x=0}=0$  and $u-bu_x|_{x=0}=0$ we obtain
  $v-b v_x|_{x=0}=0$  and
 \bess
   ( \cL^y v,\Phi_\xi ) =   ( v,\cL^y \Phi_\xi) = ( v, R_\xi)  =O(1)\|v\|\;\|R_\xi\|=O(1)\sqrt{1+y} e^{-2\lambda y} \|v\|.
\eess
Similarly, using $N(\Phi,v)=O(1) v^2$ we obtain
 $  (N,\Phi_\xi) = O(1) \| v\|^2.$
   Hence, we have the motion law
\bess
  \dot y(t) &=& \frac{ ( R,\Phi_\xi)+(\cL^y v,\Phi_\xi)+(N,\Phi_\xi) }{\|\Phi_\xi\|^2- (v,\Phi_{\xi\xi})}
   \\ &=& \frac{(R,\Phi_\xi )} {\|\Phi_\xi\|^2+O(1)\|v\|} +
   O(1)   \sqrt{1+y}  e^{-2\lambda y}\|v\|+O(1)\|v\|^2
\\    &=& c(b) e^{-2\lambda y}+O(e^{-3\lambda y}) +  O(1)  \sqrt{1+y}  e^{-2 \lambda y}\|v\|+ O(1)\|v\|^2
 \eess
where $c(b)$ is that defined in Theorem~\ref{thm:main3}.
Hence, we have the following:

\begin{lem}\label{le5.5}   Under the decomposition $u(\cdot,t)=\Phi(\cdot,y(t))+v(\cdot,t)$ where
   $v\perp\Phi_\xi(\cdot,y(t))$, we have
   \bes\label{motion} \frac{dy(t)}{dt}  =  e^{-2\lambda y} c(b)+O(1) e^{-3\lambda y}+O(1)\|v\|^2.\ees
  When $b\lambda=1$  we have $c(b)=0$ and the next order expansion
  \bes  \frac{dy(t)}{dt}  = \hat c e^{-3\lambda y}+O(1) e^{-4\lambda y}+O(1)\|v\|^2,\quad \hat c=\frac{f''(0)A^3}{12 \int_0^\theta \sqrt{F(s)}ds}.\ees
\end{lem}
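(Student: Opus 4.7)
The plan is to derive the motion law by differentiating the orthogonality constraint $(v,\Phi_\xi(\cdot,y(t)))=0$ along the flow and projecting the PDE onto $\Phi_\xi$. Writing $u=\Phi+v$ with $\Phi=\Phi(\cdot,y(t))$, a direct computation substituting into $u_t=u_{xx}+f(u)$ gives
\[
\dot y\,\Phi_\xi+v_t=R+\cL^y v+N(\Phi,v),
\]
where $R$, $\cL^y$ and $N$ are as above. Differentiating $(v,\Phi_\xi)\equiv 0$ in $t$ produces $(v_t,\Phi_\xi)=-(v,\Phi_{\xi\xi})\dot y$, so taking the $L^2$ inner product of the displayed equation with $\Phi_\xi$ yields the scalar identity
\[
\bigl(\|\Phi_\xi\|^2-(v,\Phi_{\xi\xi})\bigr)\,\dot y=(R,\Phi_\xi)+(\cL^y v,\Phi_\xi)+(N,\Phi_\xi).
\]

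The next step is to estimate each piece. The main calculation, and the step I expect to require the most care, is the self-adjointness identity $(\cL^y v,\Phi_\xi)=(v,\cL^y\Phi_\xi)$. Integrating by parts twice produces boundary contributions $-v_x(0)\Phi_\xi(0,y)+v(0)\Phi_{x\xi}(0,y)$; these cancel because $v-bv_x=0$ at $x=0$ (inherited from the Robin condition on both $u$ and $\Phi$) and, differentiating $\Phi-b\Phi_x|_{x=0}=0$ in $\xi$, $\Phi_\xi-b\Phi_{x\xi}=0$ at $x=0$. Once self-adjointness is in hand, $\cL^y\Phi_\xi=\partial_\xi(\Phi_{xx}+f(\Phi))=R_\xi$, so $(\cL^y v,\Phi_\xi)=(v,R_\xi)=O(\|v\|\,\sqrt{1+y}\,e^{-2\lambda y})$ by Cauchy--Schwarz and the $R$-lemma. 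The nonlinear term satisfies $|N(\Phi,v)|=O(v^2)$ uniformly, whence $(N,\Phi_\xi)=O(\|v\|^2)$, and the leading term $(R,\Phi_\xi)=\frac{2\lambda^2(1-b\lambda)A^2}{1+b\lambda}e^{-2\lambda y}+O(e^{-3\lambda y})$ was already computed. The denominator $\|\Phi_\xi\|^2-(v,\Phi_{\xi\xi})$ converges to $2\int_0^\theta\sqrt{F(s)}\,ds>0$ as $y\to\infty$ and $\|v\|\to 0$, so it is bounded away from $0$ for large $t$.

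Dividing through and collecting terms gives
\[
\dot y=\frac{(R,\Phi_\xi)}{\|\Phi_\xi\|^2}\bigl(1+O(\|v\|)\bigr)+O\bigl(\sqrt{1+y}\,e^{-2\lambda y}\|v\|\bigr)+O(\|v\|^2).
\]
The leading factor is precisely $c(b)e^{-2\lambda y}+O(e^{-3\lambda y})$ by the formula for $c(b)$ in Theorem~\ref{thm:main3}. To absorb the mixed term, I apply a weighted Young inequality: for any $y$ large,
\[
\sqrt{1+y}\,e^{-2\lambda y}\|v\|\leqslant \tfrac{1}{2}e^{-3\lambda y}+\tfrac{1}{2}(1+y)e^{-\lambda y}\|v\|^2,
\]
and since $(1+y)e^{-\lambda y}$ is bounded, this is $O(e^{-3\lambda y})+O(\|v\|^2)$, which yields the claimed expansion \eqref{motion}.

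For the critical case $b\lambda=1$, the coefficient of $e^{-2\lambda y}$ in both $B(\xi)$ and $(R,\Phi_\xi)$ vanishes, so $c(b)=0$ automatically. The higher-order expansions of $R$ and $(R,\Phi_\xi)$ supplied in the previous lemma (valid under $f\in C^3$ with $f''(0)>0$) then produce $(R,\Phi_\xi)=\frac{f''(0)A^3}{6}e^{-3\lambda y}+O(e^{-4\lambda y})$ and $\|R_\xi\|=O(\sqrt{1+y}\,e^{-3\lambda y})$. Re-running the same projection argument with these sharpened bounds, and using the analogous Young inequality to absorb the $\sqrt{1+y}\,e^{-3\lambda y}\|v\|$ cross term into $O(e^{-4\lambda y})+O(\|v\|^2)$, produces the second displayed motion law with $\hat c=\frac{f''(0)A^3}{12\int_0^\theta\sqrt{F(s)}\,ds}$, matching the constant stated in Theorem~\ref{thm:main3}~(ii).
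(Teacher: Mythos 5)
Your proposal is correct and follows essentially the same route as the paper: project the equation $\dot y\,\Phi_\xi+v_t=R+\cL^y v+N$ onto $\Phi_\xi$, use $(v,\Phi_\xi)\equiv 0$ to handle $(v_t,\Phi_\xi)$, exploit self-adjointness of $\cL^y$ under the Robin condition to bound $(\cL^y v,\Phi_\xi)=(v,R_\xi)$, and insert the expansions of $(R,\Phi_\xi)$ and $\|\Phi_\xi\|^2$. Your explicit Young-inequality absorption of the cross term $\sqrt{1+y}\,e^{-2\lambda y}\|v\|$ into $O(e^{-3\lambda y})+O(\|v\|^2)$ is a detail the paper leaves implicit, but it is the right way to reconcile the intermediate estimate with the stated form of the lemma.
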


   \subsection{The Distance to the Approximated Center Manifold}
   We investigate the size $\|v\|$, starting   from the basic estimate
    \bess  \dot y = O(1) [e^{-2\lambda y} + \|v\|^2]. \eess
    We divide the estimate process   in several steps.

{\bf 1.} Let $(\nu_1(\xi),\varphi_1(\cdot,\xi))$ be the principal eigenpair of $\cL^\xi$.
Set
\bess c_1(t)=(v,\varphi_1(\cdot,y(t))),\quad  v_1= c_1(t) \varphi_1(\cdot,y(t)),\quad  v_2= v-v_1.\eess
Here we point out that $\varphi_1(x,y(t))\approx \varphi_1^0(x-y(t))$ decays exponentially fast as $|x-y(t)|\to \infty$,
so there exists a positive constant $C$ such that  (for all $t\gg1$)
$$\frac{1}{C} \leqslant\|\varphi_1 (\cdot, y(t))\|_{L^p([0,\infty))} \leqslant C \quad\forall\, p\in[1,\infty)\cup\{\infty\}.$$
Consequently, $\max\{\|v_1\|_{L^1}, \|v_1\|_{\infty}\} =O(1)|c_1(t)|=O(1) \|v_1\|$.
Also
 $v_2\perp \varphi_1$ and $\|v\|^2 = \|v_1\|^2 +\|v_2\|^2$. In addition,
\bess (v_{2t},v_{1}) &=& (v_2,v_1)_t-(v_2,v_{1t})=-(v_2,v_{1t})
\\ &=& -(v_2, \dot c_1 \varphi_1+c_1\varphi_{1\xi}\dot y)= -(v_2,\varphi_{1\xi}) \dot y
c_1(t) = O(1) \|v_2\|\;|\dot y|\;  \|v_1\|.
\eess
Hence,  taking the inner product of (\ref{basiceq}) with $v_1$ and using $v=v_1+v_2$ we obtain
 \bess  \frac{d}{dt}\frac{\|v_1\|^2}2 &=& -(v_{2t},v_1)+ (\cL^y v,v_1) +(R+N- \dot y \Phi_\xi,v_1)
\\ &=&O(1) \|v_2\| \;|\dot y|\;\|v_1\|+ ( v,\cL^y v_1) + O(1) \|v_1\| (e^{-2\lambda y}+\|v\|^2).
\\ &=& \nu_1 \|v_1\|^2 + O(1) [  e^{-2\lambda y}+\|v\|^2 ]\; \|v_1\|.\eess
Here we have used the estimates
\bess
& |(R-\dot y\Phi_\xi,v_1)| \leqslant \|R-\dot y\Phi_\xi\|_{\infty}\; \|v_1\|_{L^1} = O(1) (  e^{-2\lambda y}+\|v\|^2) \|v_1\|,\\
& |(N,v_1)|\leqslant   \|N\|_{L^1} \|v_1\|_{\infty}= O(1) \| v\|^2\|v_1\|. \eess
Hence
\bess
\frac{d}{dt} \|v_1\| &=& \nu_1 \|v_1\| + O(1) [ e^{-2\lambda y(t)}+\|v\|^2].
\eess

{\bf 2.}
 Similarly,  taking the inner product of (\ref{basiceq}) with $v_2$   we obtain
\bess
\frac{d}{dt}\frac{\|v_2\|^2}{2} &=& - ( v_{1t},v_2)+(\cL^y v,  v_2 ) + (R+N-\dot y\Phi_\xi,v_2)
\\ &\leqslant & \nu_2 \|v_2\|^2 + O(1) \Big[ \sqrt{1+y} e^{-2\lambda y}+\|v\|_\infty \|v\| + \|v\|^2 \Big] \|v_2\| .
\eess
Here we used the fact that $v_2\perp \varphi_1$ so $(\cL^y v,v_2)=(\cL^y v_1+\cL^y v_2,v_2)=
(\cL^y v_2,v_2) \leqslant \nu_2 \|v_2\|^2$, and the estimate
\bess |(R,v_2)|\leqslant \|R\|\;\|v_2\| = O(1)  \sqrt{1+y}  e^{-2\lambda y} \|v_2\|,
\quad (N,v_2) = O(1) \|v\|_{\infty} \|v\|\;\|v_2\|. \eess
Thus,
\bess
   \frac{d}{dt} \|v_2\| \leqslant \nu_2 \|v_2\|
    + O(1) [\sqrt{1+y}e^{-2\lambda y(t)}+\|v\|_\infty\|v\|  +\|v\|^2].
\eess

{\bf 3.}
Combining  the above two estimates, we then obtain, for $M>0$ and $y>1/\lambda$,
\bess && \frac{d}{dt}\Big( \|v_1\|-\|v_2\|- M\sqrt{1+y} e^{-2\lambda y}\Big) \geqslant \frac{d}{dt}\Big( \|v_1\|-\|v_2\|\Big) -2\lambda M\sqrt{1+y} e^{-2\lambda y}|\dot y| \vspace{3pt}
 \\  &\geqslant&  \nu_1 \|v_1\|-\nu_2\|v_2\|-O(1)
  [ \sqrt{1+y} e^{-2\lambda y}+
 \|v\|_\infty \|v\|+\|v\|^2]-2\lambda M \sqrt{1+y} e^{-2\lambda y}|\dot y| \vspace{3pt}
\\ &\geqslant & \frac{\nu_1}{2} (\| v_1\|-\|v_2\|- M\sqrt{1+y} e^{-2\lambda y}) + \|v_1\|\Big( \frac{\nu_1}{2} -O(1) [ \|v\| +\|v\|_\infty] \Big) \vspace{3pt}
\\ && \quad + \|v_2\|\Big( \frac{\nu_1}{2} -\nu_2 -O(1) [ \|v\| +\|v\|_\infty]\Big)  +  \sqrt{1+y} e^{-2\lambda y} \Big( \frac{\nu_1}2 M -2\lambda M|\dot y|-O(1)\Big).
 \eess
The lemma below follows:
 \begin{lem}
 Under the notation above, there exist positive constants $M$ and $t_0>0$ such that
 \bess \|v_1\| &\leqslant & M\sqrt{1+y} e^{-2\lambda y}+\|v_2\| \qquad\forall\, t>t_0. \eess
 \end{lem}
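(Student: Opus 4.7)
The plan is to combine the differential inequality displayed immediately before the statement with a contradiction argument of Gronwall type. Define
$$g(t) := \|v_1(\cdot,t)\| - \|v_2(\cdot,t)\| - M\sqrt{1+y(t)}\, e^{-2\lambda y(t)},$$
for a positive constant $M$ to be chosen. The goal is to show that $g(t) \leq 0$ for all sufficiently large $t$, which is precisely the claimed bound.

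The first step is to fix $M$ large, and then choose a time $t_0$ so large that the following hold simultaneously for all $t \geq t_0$: (i)~$\nu_1(y(t))/2 \geq \mu_1/4$ and $\nu_1(y(t))/2 - \nu_2(y(t)) \geq \mu_1/4$, which is possible since $\nu_i(\xi) \to \mu_i$ with $\mu_1>0=\mu_2$; (ii)~$\|v(\cdot,t)\|$ and $\|v(\cdot,t)\|_\infty$ are small enough to absorb the $O(1)$ terms multiplying $\|v_1\|$ and $\|v_2\|$, which follows from \eqref{convergence H2} together with the Sobolev embedding $H^1 \hookrightarrow L^\infty$ on the line; and (iii)~$|\dot y(t)|$ is small enough so that $\nu_1 M/2 - 2\lambda M|\dot y| - O(1) \geq 0$, which follows from the motion law in Lemma~\ref{le5.5} combined with (ii) and $y(t)\to\infty$. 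Under these three conditions, each of the three bracketed terms in the displayed inequality before the lemma is nonnegative, leaving
$$g'(t) \geq \frac{\nu_1(y(t))}{2}\, g(t) \geq \frac{\mu_1}{4}\, g(t)\quad\text{for all } t\geq t_0.$$

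The second step is a contradiction argument. By \eqref{convergence H2} and the triangle inequality, $\|v_1(\cdot,t)\|, \|v_2(\cdot,t)\| \leq \|v(\cdot,t)\| \to 0$; since also $y(t)\to\infty$, we have $g(t)\to 0$ as $t\to\infty$. If there existed $t^* \geq t_0$ with $g(t^*) > 0$, then Gronwall's inequality applied to the differential inequality above would yield $g(t) \geq g(t^*)\,e^{\mu_1(t-t^*)/4}$ for all $t\geq t^*$, contradicting $g(t)\to 0$. Therefore $g(t)\leq 0$ for all $t\geq t_0$, which is the conclusion of the lemma.

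The main obstacle is item (iii) in the setup, where the constant $M$ must be chosen large enough that $\nu_1 M/2 - 2\lambda M|\dot y| - O(1) \geq 0$; the implicit $O(1)$ here depends on the constants coming from the estimates of $|(R,\Phi_\xi)|$, $|(\cL^y v,\Phi_\xi)|$ and $|(N,\Phi_\xi)|$ that were used to derive the motion law in Lemma~\ref{le5.5}, and these must be tracked to confirm that $M$ can be chosen independently of $t$. A minor technical issue is that $\|v_i\|$ may fail to be classically differentiable where it vanishes; this is standard and can be handled either by working with $\|v_i\|^2$ and dividing by $\|v_i\|$ on the open set where it is positive, or by interpreting the derivatives in the Dini sense throughout.
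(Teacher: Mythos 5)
Your proof is correct and follows essentially the same route as the paper: you choose $M$ and $t_0$ to make the three bracketed terms in the preceding display nonnegative, derive the differential inequality $g' \geq (\nu_1/2)\, g$, and conclude by a Gronwall-type argument that $g(t) > 0$ at any point would force exponential growth, contradicting $g(t) \to 0$. The additional remarks you make (that the $O(1)$ constants must be verified independent of $M$, and that the non-differentiability of $\|v_i\|$ at zero can be handled via $\|v_i\|^2$ or Dini derivatives) are legitimate technical points that the paper leaves implicit but do not change the substance of the argument.
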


\begin{proof} Since $\lim_{\xi\to\infty} \nu_1(\xi)=\mu_1>0$ and   $\lim_{\xi\to\infty}\nu_2(\xi)=0$, we can choose $M>0$ and $t_0 >0$ such that for all $t >t_0$:
\bess  \frac{\nu_1}{2} -O(1) [ \|v\| +\|v\|_\infty] >0,
\\ \frac{\nu_1}{2} -\nu_2 -O(1) [ \|v\| +\|v\|_\infty] >0,
\\ \frac{\nu_1}2M -2\lambda M |\dot y(t)|-O(1)\geqslant 0.
\eess
This implies that
\bess \frac{d}{dt} \Big( \|v_1\|-\|v_2\|-M\sqrt{1+y} e^{-2\lambda y}\Big) \geqslant
\frac{\nu_1}{2} \Big( \|v_1\|-\|v_2\|-M\sqrt{1+y} e^{-2\lambda y}\Big).\eess
Thus, we must have the assertion since otherwise $\|v_1\|-\|v_2\|-M\sqrt{1+y}
e^{-2\lambda y}$ grows exponentially fast.
\end{proof}

{\bf 4.}
Next, we estimate $v_2$. Let $(\nu_2(y),\varphi_2(\cdot,y))$ be the second eigenpair of $\cL^y$.  We set
\bess c_2= (v_2, \varphi_2), \qquad \hat v_{2} = c_2 \varphi_2, \quad v_3 = v_2-\hat v_2. \eess
Then $v=v_1+\hat v_2+v_3$. Decomposing $\Phi_\xi= a_1 \varphi_1+a_2 \varphi_2+\Phi_\xi^\perp$
where $\Phi_\xi^\perp\perp \varphi_1,\varphi_2$. Since $v\perp \Phi_\xi$ we obtain
\bess  0=(v,\Phi_\xi)= (c_1\varphi_1+c_2\varphi_2 +v_3,a_1 \varphi_1+a_2\varphi_2+\Phi_\xi^\perp)
=a_1 c_1+a_2 c_2 + (v_3,\Phi_\xi^\perp).  \eess
Hence, we have
\bess c_2 = -\frac{1}{a_2} \Big( a_1 c_1+ (v_3,\Phi_\xi^\perp)\Big).\eess
Hence, when $t>t_0$,
\bess \|v\|& = & \|v_1+v_2\|\leqslant \|v_1\|+\|v_2\|
\\ &\leqslant &  M\sqrt{1+y} e^{-2\lambda y} + 2 \| v_2\|
\leqslant M\sqrt{1+y} e^{-2\lambda y}+2|c_2|+ 2\|v_3\|
\\ &\leqslant & M\sqrt{1+y} e^{-2\lambda y}+ \frac{2}{|a_2|} \Big(|a_1| \|v_1\|+ \|v_3\|\;\|\Phi_\xi^\perp\|\Big)+2 \|v_3\|.
\eess
Since $\|v\|^2=\|v_1\|^2+\|\hat{v}_2\|^2+\|v_3\|^2$ and
\bess
   \lim_{t\to\infty} (|a_1|+\|\Phi_\xi^\perp\|)=0,\qquad \lim_{t\to\infty} a_2=\|V'\|_{L^2(\R)} =\Big( 2\int_0^{\theta}
   \sqrt{F(s)} ds \Big)^{1/2},
\eess
we see that there exist constants $t_1>0$ and $\gamma>0$ such that
\bes\label{v-v_3}
   \|v\| \leqslant  2M\sqrt{1+y} e^{-2\lambda y}  + \gamma \|v_3\|\quad\forall\, t>t_1.
\ees

{\bf 5.} Finally, writing $v=c_1\varphi_1+c_2\varphi_2+v_3$, and taking the inner product of
\eqref{basiceq} with $v_3$ we obtain
\bess \frac{d}{dt}\frac{\|v_3\|^2}{2} &=& (-[c_1 \varphi_{1}+c_2\varphi_{2}]_t, v_3) + (\cL^y v, v_3)
+ (R-\dot y \Phi_\xi+N, v_3)
\\ &=& (-c_1 \varphi_{1\xi} -c_2\varphi_{2\xi}, v_3) \dot y + (\cL^y v_3,v_3) + (R-\dot y \Phi_\xi+N, v_3)
\\ &\leqslant & \nu_3 \|v_3\|^2 + O(1) [\sqrt{1+y} e^{-2\lambda y}+\|v\|_\infty \|v\| +\|v\|^2] \|v_3\|.
\eess
Here in the second equation we have used the fact that $(\dot c_i \varphi_i,v_3)=0$ for $i=1,2$.
Thus,
\bess
\frac{d}{dt} \|v_3\| &\leqslant&  \nu_3\|v_3\| +  O(1) [\sqrt{1+y} e^{-2\lambda y}+\|v\|_\infty\|v_3\| +\|v\|^2 ]
\\ &\leqslant& \Big(\nu_3+O(1)[\|v\|_\infty+\|v\|] \Big) \|v_3\|+O(1) \sqrt{1+y} e^{-2\lambda y}
\eess
by \eqref{v-v_3}. Hence,  when $y>1/\lambda$,
 \bess \frac {d}{dt}\Big( \|v_3\|- \hat M\sqrt{1+y}e^{-2\lambda y}\Big)
  & \leqslant & \|v_3\|\Big(\nu_3 +  O(1) [\|v\|_\infty+\|v\|] \Big)+ [2\lambda\hat M |\dot y|+O(1)] \sqrt{1+y}e^{-2\lambda y}
   \\ &=&-\nu \Big(\|v_3\|-\hat M\sqrt{1+y} e^{-2\lambda y}\Big)
  \\ &&
   + \|v_3\|\Big({\nu_3}+\nu + O(1)[\|v\|_\infty + \|v\|]\Big)
  \\ &&  +
     \sqrt{1+y} e^{-2\lambda y}\Big(-\nu\hat M+2\lambda|\dot y|\hat M +O(1)\Big)
 \eess
 where $\nu$ and $\hat M$ are positive constants to be determined.

Since $\lim_{t\to\infty}\nu_3(y(t))=\mu_3<0$, we can choose $\nu\in(0,-\mu_3)$, $t_2$ and $\hat M\gg1$ such that when $t>t_2$,
\bess
\frac{d}{dt} \Big(\|v_3\|-\hat M\sqrt{1+y} e^{-2\lambda y}\Big) \leqslant -\nu
  \Big( \|v_3\|-\hat M\sqrt{1+y} e^{-2\lambda y}\Big).
\eess
This implies that, for some $C>0$,
$$
\|v_3\|-\hat M\sqrt{1+y} e^{-2\lambda y} \leqslant C e^{-\nu (t-t_2) }\quad\forall\, t>t_2.
$$
Putting this together with \eqref{v-v_3}, we conclude that
\begin{equation}\label{eqn:expansion_v}
 \|v(\cdot,t)\| = O(1)[ \sqrt{1+y(t)} e^{-2\lambda y(t)} + e^{-\nu t}].
\end{equation}

\subsection{Completion of the Proof of Theorem~\ref{thm:main3}}

Substituting \eqref{eqn:expansion_v} into the assertions of Lemma \ref{le5.5} we obtain the following:

\begin{lem}\label{lem:speed}
  Assume that \eqref{convergence H2} holds with $\xi(t)\to \infty$ as $t\to \infty$. Let $y(t)$
  be the function defined by \eqref{def-y(t)}. Then
  \bes   \label{doty} \dot y(t) &=&  c(b) e^{-2\lambda y}+ O(e^{-3\lambda y}) + O(1) e^{-2\nu t}. \ees
 When $b\lambda =1$  we have $c(b)=0$ and the next order expansion
\bes\label{doty2}    \dot y(t) &=&  \hat c  e^{-3\lambda y}+ O(e^{-4\lambda y}) + O(1) e^{-2\nu t}.\ees
\end{lem}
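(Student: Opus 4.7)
The plan is that this lemma is essentially a substitution step: all of the hard work has already been carried out in Lemma~\ref{le5.5} and in the derivation of \eqref{eqn:expansion_v}. From Lemma~\ref{le5.5} we already have the motion law
\begin{equation*}
\dot y(t) = c(b)\, e^{-2\lambda y} + O(e^{-3\lambda y}) + O(1) \| v \|^2,
\end{equation*}
so to establish \eqref{doty} it suffices to control the term $\|v\|^2$.

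First I would square the bound \eqref{eqn:expansion_v} to obtain
\begin{equation*}
\|v(\cdot,t)\|^2 = O\bigl( (1+y(t))\, e^{-4\lambda y(t)}\bigr) + O(e^{-2\nu t}).
\end{equation*}
Next I would invoke the fact that $y(t)\to \infty$, which guarantees that $(1+y)\,e^{-\lambda y}$ is bounded above for all sufficiently large $t$; hence
\begin{equation*}
(1+y)\, e^{-4\lambda y} \leqslant C\, e^{-3\lambda y}.
\end{equation*}
This contribution is then absorbed into the $O(e^{-3\lambda y})$ remainder already present in Lemma~\ref{le5.5}, and the remaining $O(e^{-2\nu t})$ piece of $\|v\|^2$ gives exactly the last term in~\eqref{doty}.

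For the case $b\lambda = 1$, first note from the definition $c(b) := \frac{\lambda^2(1-b\lambda)A^2}{(1+b\lambda)\int_0^\theta \sqrt{F(s)}\,ds}$ that $c(b) = 0$, and the higher-order motion law in Lemma~\ref{le5.5} gives
\begin{equation*}
\dot y(t) = \hat c\, e^{-3\lambda y} + O(e^{-4\lambda y}) + O(1) \|v\|^2.
\end{equation*}
Here one must replay the five-step derivation of \eqref{eqn:expansion_v} using the sharper remainder bound $\|R(\cdot,\xi)\| + \|R_\xi(\cdot,\xi)\| = O(\sqrt{1+\xi}\, e^{-3\lambda \xi})$ available when $b\lambda = 1$, which yields the improved estimate $\|v(\cdot,t)\| = O(\sqrt{1+y}\, e^{-3\lambda y}) + O(e^{-\nu t})$. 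Squaring and applying the same absorption trick $(1+y)\,e^{-6\lambda y} \leqslant C\, e^{-4\lambda y}$ then produces~\eqref{doty2}. The only mild obstacle is bookkeeping the polynomial factors $(1+y)$ carefully so that they are dominated by the exponential factors for $y$ large, which is automatic here because $y(t) \to \infty$.
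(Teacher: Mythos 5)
Your proposal is correct and follows the same route as the paper: the paper dispatches this lemma in one line by stating that one substitutes \eqref{eqn:expansion_v} into Lemma~\ref{le5.5}, and your write-up simply fills in the bookkeeping (squaring $\|v\|$, absorbing the polynomial factor $(1+y)$ into a spare factor $e^{-\lambda y}$, and noting that the $e^{-2\nu t}$ piece is carried along as stated). You are also right, and in fact more careful than the terse text of the paper, that in the case $b\lambda=1$ one cannot get away with the generic bound $\|v\|=O(\sqrt{1+y}\,e^{-2\lambda y})+O(e^{-\nu t})$, since squaring it only yields $O((1+y)e^{-4\lambda y})=O(e^{-3\lambda y})$ rather than the claimed $O(e^{-4\lambda y})$; one must rerun Steps 1--5 of the preceding subsection with the refined remainder estimate $\|R\|+\|R_\xi\|=O(\sqrt{1+\xi}\,e^{-3\lambda\xi})$, which then yields $\|v\|=O(\sqrt{1+y}\,e^{-3\lambda y})+O(e^{-\nu t})$ and closes the argument.
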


  Using this lemma we can prove the following results, from which Theorem~\ref{thm:main3} is an immediate consequence.

  \begin{thm}\label{thm:speed}
  Assume that \eqref{convergence H2} holds with $\xi(t)\to \infty$ as $t\to \infty$. Let $y(t)$
  be the function defined by \eqref{def-y(t)}. Then $y(t)-\xi(t)\to 0$ as $t\to \infty$ and the following holds:
  \begin{enumerate}
 \item[\rm (i)] when $0\leqslant b\lambda <1$,
\bess
  y(t)  = \frac{1}{2\lambda} \ln [2\lambda c(b) t]+ \frac{O(1)}{\sqrt{t}},
  \quad \dot y(t)=\frac{1}{2\lambda t}+\frac{O(1)}{t^{3/2}};
\eess

 \item[\rm (ii)] when $b\lambda=1$, $f\in C^3$ and $f''(0)>0$,
\bess  y(t)  = \frac{1}{3\lambda} \ln [3\lambda \hat c t]+ \frac{O(1)}{t^{1/3}},
 \quad \dot y(t)=\frac{1}{3\lambda t}+\frac{O(1)}{t^{4/3}}.\eess


  \end{enumerate}
  \end{thm}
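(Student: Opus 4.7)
\medskip

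\noindent\textbf{Proof plan.} The convergence $y(t)-\xi(t)\to 0$ has already been recorded immediately after \eqref{def-y(t)}, so the substance of the theorem is the asymptotic expansion of $y(t)$ obtained by integrating the motion law in Lemma~\ref{lem:speed}. The approach is to remove the exponential nonlinearity by the change of variable
\[
z(t) := e^{2\lambda y(t)} \quad \text{in case (i)}, \qquad z(t) := e^{3\lambda y(t)} \quad \text{in case (ii)},
\]
which turns an ODE of the form $\dot y \approx \kappa\, e^{-k\lambda y}$ into one where $\dot z$ is asymptotically a constant, so ordinary integration is transparent.

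\medskip

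\noindent\textbf{Case (i), $b\lambda<1$.} From \eqref{doty} I would compute
\[
\dot z = 2\lambda\, z\, \dot y = 2\lambda c(b) + z\cdot O(e^{-3\lambda y}) + z\cdot O(e^{-2\nu t}) = 2\lambda c(b) + O(z^{-1/2}) + O(z\,e^{-2\nu t}).
\]
A soft continuity argument (using $y(t)\to\infty$, so $z\to\infty$) gives $z(t)\sim 2\lambda c(b)\,t$, and then the two error terms are respectively $O(t^{-1/2})$ and exponentially small. Integrating from a large fixed time yields $z(t)=2\lambda c(b)\,t+O(\sqrt{t})$, whence
\[
y(t)=\frac{1}{2\lambda}\ln\bigl[2\lambda c(b)\,t\bigr]+\frac{1}{2\lambda}\ln\!\bigl(1+O(t^{-1/2})\bigr)=\frac{1}{2\lambda}\ln\bigl[2\lambda c(b)\,t\bigr]+O(t^{-1/2}).
\]
Plugging this back into \eqref{doty} and using $e^{-2\lambda y}=z^{-1}=(2\lambda c(b)\,t)^{-1}+O(t^{-3/2})$, together with $e^{-3\lambda y}=O(t^{-3/2})$, delivers $\dot y(t)=\frac{1}{2\lambda t}+O(t^{-3/2})$.

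\medskip

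\noindent\textbf{Case (ii), $b\lambda=1$.} The analogue of the calculation above with $z=e^{3\lambda y}$ and~\eqref{doty2} produces
\[
\dot z=3\lambda \hat c+O(z^{-1/3})+O(z\,e^{-2\nu t}),
\]
so $z(t)=3\lambda \hat c\,t+O(t^{2/3})$, and therefore
\[
y(t)=\frac{1}{3\lambda}\ln\bigl[3\lambda\hat c\,t\bigr]+O(t^{-1/3}),\qquad \dot y(t)=\frac{1}{3\lambda t}+O(t^{-4/3}),
\]
after substituting back into~\eqref{doty2}.

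\medskip

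\noindent\textbf{Main obstacle and remarks.} The only nontrivial point is to first establish the leading-order behaviour $z(t)\sim 2\lambda c(b)\,t$ (resp.\ $3\lambda\hat c\,t$), since the error terms in \eqref{doty}--\eqref{doty2} are controlled by a power of $z^{-1}$ and one has to rule out both slower growth (which would make the $O(e^{-3\lambda y})$ error comparable to the main term) and an artefact from the exponentially decaying $O(e^{-2\nu t})$ contribution. A clean way to handle this is to show, via a simple barrier/continuation argument using $c(b)>0$ (ensured by $b\lambda<1$; in case (ii) one uses $\hat c>0$, which follows from $f''(0)>0$), that $\dot z$ lies between $\lambda c(b)$ and $3\lambda c(b)$ for all large $t$, so $z/t$ is bounded above and below. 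Once this crude two-sided bound is in place, the self-improving bootstrap described above yields the stated expansions, and the extension to the ($\mathbf{Fk}$) case stated in the remark following Theorem~\ref{thm:main3} proceeds identically with $z=e^{(k+1)\lambda y}$ and $c_k$ in place of $\hat c$.
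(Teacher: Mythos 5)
Your proposal is correct and follows essentially the same route as the paper: integrating the motion law of Lemma~\ref{lem:speed} (your substitution $z=e^{2\lambda y}$ is just that integration written multiplicatively), obtaining the crude estimate $y=\frac{1}{2\lambda}\ln[2\lambda c(b)t]+O(1)$, and bootstrapping once to reach the $O(t^{-1/2})$ error and the expansion of $\dot y$. The only cosmetic difference is in absorbing the $O(e^{-2\nu t})$ term: the paper notes that \eqref{doty} forces $\dot y=o(1)$, hence $y(t)=o(t)$ and $e^{-2\nu t}=O(e^{-4\lambda y})$, which disposes of that term before integrating and renders your barrier/continuation step unnecessary, though your version works equally well.
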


\begin{proof}
We already mentioned that $y(t)-\xi(t)\to 0$ as $t\to \infty$, which immediately follows from the conclusions in the previous section.
Since $\lim_{t\to\infty} y(t)=\infty$,  we obtain  from \eqref{doty}  that
$\dot y(t)=o(1)$ so $y(t)=o(t)$. This implies that $e^{-2\nu t}=O(1) e^{-4\lambda y}$.
 Equation \eqref{doty} can be rewritten as
\bess  e^{2\lambda y} \frac{dy}{dt} = c(b)+O(1) e^{-\lambda y}. \eess

(i) If $0\leqslant b\lambda<1$, then $c(b)>0$.  An  integration gives
   \bess  y(t) &=& \frac{1}{2\lambda} \ln\Big\{e^{2\lambda y(0)} +2\lambda c(b) t +O(1)\int_0^t  e^{-\lambda y(\tau)}d\tau \Big\}
   \\ &=& \frac{1}{2\lambda} \ln [2\lambda c(b) t] + O(1) \frac{ e^{2\lambda y(0)}+
   \int_0^t e^{-\lambda y(\tau)}d\tau}{t}.\eess
   Hence, $y(t)=\frac{1}{2\lambda} \ln[2\lambda c(b) t]+O(1)$,  $ \int_0^t e^{-\lambda y(\tau)}d\tau=O(\sqrt{t})$, $y(t)=\frac{1}{2\lambda}\ln[2\lambda c(b) t]+O(1)t^{-1/2}$, and by  \eqref{doty}, $\dot y(t)=\frac{1}{2\lambda t}+\frac{O(1)}{t^{3/2}}$.

 (ii) The case $b\lambda=1$ and $f''(0)>0$ is similarly treated as (i),  using (\ref{doty2}) instead of (\ref{doty}).
\end{proof}


\bigskip

{\bf Acknowledgement}.  The second author would like to thank Professors E. Feireisl,
P. Pol\'{a}\v{c}ik, Y. Yamada, Y. Du for sending their papers to them and/or helpful discussions.

\end{document}